\def\DD{D^{(1)}}
\def\DDX{\DD(X)}
\def\DDXN{(\DD(X),\norm)}
\newcommand{\wdiff}[1]{{\widetilde{D}^{(1)}(#1)}}
\newcommand{\jpdf}[2]{\includegraphics[width=#1 cm]{#2}}
\newcommand{\Jpdf}[1]{\begin{center}\jpdf{10}{#1}\end{center}}
\newcommand{\Jim}[1]{#1^*}
\def\QX{Q_X}
\def\gammatilde{{\widetilde{\gamma}}}
\def\Cone{C^{(1)}}
\def\Aone{A^{(1)}}
\def\Rplus{\R^+}
\def\Rplusdot{\R^{+\bullet}}
\def\lift{\vskip -\baselineskip}
\numberwithin{equation}{section}
\newtheorem{theorem}{Theorem}[section]
\newtheorem{lemma}[theorem]{Lemma}
\newtheorem{corollary}[theorem]{Corollary}
\newtheorem{proposition}[theorem]{Proposition}
\theoremstyle{definition}
\newtheorem{definition}[theorem]{Definition}
\newtheorem{example}[theorem]{Example}
\theoremstyle{remark}
\newtheorem{remark}[theorem]{Remark}
\newcommand{\gd}{\delta}
\newcommand{\jchange}{}
\newcommand{\jnew}{}
\DeclareMathOperator{\Lip}{Lip}
\DeclareMathOperator{\Log}{Log}
\DeclareMathOperator{\Arg}{Arg}
\DeclareMathOperator{\Tint}{int}
\def\smallskip{\vskip 0.25cm}
\def\medskip{\smallskip}
\def\phi{\varphi}
\newcommand{\dd}{\,{\rm d}}
\def\C{{{\mathbb C}}}
\def\R{{{\mathbb R}}}
\def\T{{{\mathbb T}}}
\def\N{{{\mathbb N}}}
\def\I{{{\mathbb I}}}
\def\sskip{\vskip 0.3 cm}
\def\jskip{}
\def\i{{\rm i}}
\def\e{{\rm e}}
\def\Jnorm{{\|\,{\cdot}\,\|}}
\def\jnorm#1{{\|{#1}\|}}
\def\norm{\Jnorm}
\def\unorm#1{{|{#1}|}}
\def\Unorm{{|\,{\cdot}\,|}}
\def\eps{\varepsilon}
\newcommand{\QED}{\hfill$\Box$\par}
\newcommand {\D}{\mathbb D}
\def\stddiff#1{\DD(#1)}
\def\BF#1{D_{\paths}^{(1)}(#1)}
\def\Mone{M_{z_0}^{(1)}(X)}
\newcommand{\paths}{\mathcal{F}}
\newcommand{\Length}[1]{|#1|}
\newcommand{\lv}{\left\vert}
\newcommand{\rv}{\right\vert}
\newcommand{\lV}{\left\Vert}
\newcommand{\rV}{\right\Vert}
\def\rd{{\rm d}}
\def\sI{\mathcal{I}}
\begin{document}

\title[Normed algebras of differentiable functions]{Normed algebras of differentiable functions on compact plane sets}
\author{H.\ G.\ Dales}
\address{Department of Pure Mathematics, University of Leeds, Leeds~LS2~9JT, UK}
\email{garth@maths.leeds.ac.uk}
\author{J.\ F.\ Feinstein}
\address{School of Mathematical Sciences, University of Nottingham, University Park, Notting\-ham~NG7~2RD, UK}
\email{Joel.Feinstein@nottingham.ac.uk}

\subjclass[2000]{Primary 46H05, 46J10; Secondary 46E25}

\date{}

\begin{abstract}
We investigate the completeness and completions of the normed algebras $\DDXN$  for perfect, compact plane sets $X$.
In particular, we construct
a radially self-absorbing, compact plane set $X$ such that the normed algebra $\DDXN$  is not complete. This solves a question of Bland and Feinstein.
We also prove that there are several classes of connected, compact plane sets $X$ for which the completeness of $\DDXN$  is equivalent to the pointwise regularity of $X$. For example, this is true for all rectifiably connected, polynomially convex, compact plane sets with empty interior, for all star-shaped, compact plane sets, and for all Jordan arcs in $\C$.

In an earlier paper of Bland and Feinstein, the notion of an $\paths$-derivative of a
function was introduced, where $\paths$ is a suitable set of
rectifiable paths, and with it a new family of Banach algebras  $\BF{X}$
corresponding to the normed algebras $\DD(X)$.
In the present paper, we obtain stronger results
concerning the questions when $\DD(X)$ and $\BF{X}$ are equal, and when the former
is dense in the latter. In particular, we show that equality holds
whenever $X$ is \lq $\paths$-regular'.

An example of Bishop shows that the completion of $\DDXN$ need not be
semisimple. We show that the completion of $\DDXN$ is semisimple
whenever the union of all the rectifiable Jordan arcs in $X$
is dense in $X$.

We prove that the character space of $\DD(X)$ is equal to $X$ for all perfect, compact plane sets $X$, whether or not
$\DDXN$   is complete. In particular, characters on the normed algebras $\DDXN$ are automatically continuous.
\end{abstract}

\maketitle

\section{Introduction}
\noindent
Throughout this paper, by a \textit{compact space} we shall mean a non-empty, compact, Hausdorff topological space; by a \textit{compact plane set} we shall mean a \emph{non-empty}, compact subset of the complex plane. Recall that such a set is \emph{perfect} if it has no isolated points.

Let $X$ be a perfect, compact plane set, and let $\DD(X)$ be the normed algebra of all continuously differentiable, complex-valued functions on $X$. These algebras (and others) were discussed by Dales and Davie in \cite{Dales-Davie}.
We shall continue the study of these algebras, concentrating on the problem of giving necessary and sufficient conditions on $X$ for $\DDXN$ to be complete.

We shall see an interesting relationship between the geometry of a compact plane set $X$ and the properties of the normed algebra $\DDXN$. For example, it is shown in
Corollary \ref{poly-term-eliminated}
that, for polynomially convex, geodesically bounded (and hence connected), compact plane sets $X$, the normed algebra $\DDXN$ is complete if and only if the following condition (condition (\ref{poly-condition})) holds:
for each $z \in X$, there exists $B_z > 0$ such that, for all polynomials $p$ and all $w \in X$, we have
\[
|p(z)-p(w)| \leq B_z |p'|_X|z-w|\,.
\]
However, for many such sets $X$, it is far from easy to determine whether or not this condition, which involves only polynomials, is satisfied.
We conjecture that, for each connected, compact plane set $X$ with more than one point, $\DDXN$ is complete if and only if $X$ is pointwise regular.
We shall prove that this is true for a variety of somewhat exotic connected, compact plane sets. We shall also describe some other connected, compact plane sets for which we have not been able to determine whether or not this is the case.

In \cite{Bland-Fein}, the notion of $\paths$-derivative was introduced in order
to describe the completions of the  spaces $\DDXN$;
here $\paths$ is a suitable set of rectifiable paths lying in a compact plane set $X$.
We shall remove an unnecessary restriction on the sets of paths considered in \cite{Bland-Fein}.
We shall also introduce a generalization of
the standard notion of pointwise regularity for compact plane sets in order to further strengthen
some of the results of \cite{Bland-Fein}.

We shall then investigate further the completeness of the spaces $\DDXN$ for compact plane sets $X$,
and discuss their completions.
In particular, we shall answer a problem raised in \cite{Bland-Fein} by
 constructing a radially self-absorbing, compact plane set $X$ such that $\DDXN$ is not complete.

\section{Preliminary concepts and results}
\noindent
We begin with some standard terminology, notation, definitions and results.
For more details, the reader may wish to consult \cite{Dales}.

We denote the unit interval $[0,1]$ by $\I$ and the complex plane by $\C$; the real line is $\R$,
$\Rplus = [0,\infty)$, and
$\Rplusdot=(0,\infty)$; we often identify $\C$ with $\R^2$.

We denote by $\Cone(\R^2)$ \jchange the algebra of functions from $\R^2$ to $\C$ which have continuous first-order partial derivatives on $\R^2$.
We denote by $Z$ the coordinate functional on $\C$,
$z \mapsto z$, or the restriction of this function to some subset
of $\C$.  Similarly we write $1$ for
the function constantly equal to $1$.

We recall the following standard notation. Let $(a_n)$ be a sequence in $\Rplus$, and let $(b_n)$ be a sequence
in $\Rplusdot$.
We write
\[
a_n=O(b_n)
\]
if the sequence $(a_n/b_n)$ is bounded, and
\[
a_n=o(b_n)\quad\text{as}\:\,n\to\infty
\]
if \jchange $a_n/b_n \to 0$ as $n \to \infty$.

Let $X$ be a compact space.
We denote the algebra of all continuous, complex-valued functions on
$X$, with the pointwise algebraic operations, by $C(X)$. For $f \in C(X)$, we denote the uniform norm of $f$ on a non-empty
subset $E$ of $X$ by $|f|_E$.  Thus $(C(X), \lv\,\cdot\,\rv_X)$ is  a commutative Banach algebra; see \cite[\S4.2]{Dales}.

\begin{definition}
Let $X$ be a compact space.
A \textit{normed function algebra} on $X$ is a normed algebra $(A,\Jnorm)$ such that $A$ is a subalgebra
of $C(X)$, such that $A$ contains the constants and separates the points of $X$, and such that, for all $f \in A$, we have
$\jnorm{f} \geq \unorm{f}_X$.
A \textit{Banach function algebra} on $X$ is a normed function algebra $(A,\Jnorm)$ on $X$ such that
 $(A,\Jnorm)$ is complete.
A \textit{uniform algebra} on $X$ is a Banach function algebra $A$ on $X$ such that the norm of $A$ is equivalent to the uniform norm $\Unorm_X$.\smallskip

Of course, in the case where $(A,\Jnorm)$ is a Banach algebra and a subalgebra of $C(X)$, it is automatic that $\jnorm{f} \geq \unorm{f}_X$ for all $f \in A$.

Let $A$ be a complex algebra. As in \cite{Dales}, the space of all characters on $A$ is denoted by $\Phi_A$. If $A$ is a normed algebra, the space of continuous characters on $A$ is $\Psi_A$; $\Psi_A$ is a locally compact space with respect to the relative weak-$*$ topology.
In the case where $A$ is a normed function algebra on  a compact space $X$, we
define
\[
\eps_x : f \mapsto f(x)\,,\quad A\rightarrow \C\,,
\]
for each $x \in X$. Then $\eps_x \in \Psi_A$, and the map $x \mapsto \eps_x$,
$ X \rightarrow \Psi_A$, is a continuous embedding. As in \cite[\S4.1]{Dales}, we say that $A$ is \textit{natural} (on $X$) if this map is surjective.
\end{definition}

The following result is due to Honary \cite[Theorem]{Honary}.\smallskip

\begin{proposition} \label{Honary} Let $X$ be a compact space, and let $(A, \norm)$ be a normed function algebra on $X$, with uniform closure $B$. Then $A$ is natural on $X$ if and only if both of the following conditions hold:
\smallskip
{\rm (a)} $B$ is natural on $X$;\smallskip

{\rm (b)}  $\lim_{n\to\infty} \lV f^n\rV^{1/n} = 1$ for each $f \in A$ with $\lv f \rv_X =1$. \qed
\end{proposition}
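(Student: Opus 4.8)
The plan is to analyse continuous characters on $(A,\norm)$ by comparing them with the characters of two associated commutative, unital Banach algebras: the completion $\widetilde{A}$ of $(A,\norm)$, and the uniform closure $B$. Two standard facts (see \cite{Dales}) do most of the work. First, every character of a commutative unital Banach algebra is continuous, so $\Phi_B=\Psi_B$; moreover, since $A$ is dense in $\widetilde A$ and $\norm$-continuous characters extend uniquely by continuity (multiplicativity passing to the limit) while characters of $\widetilde A$ restrict to $\norm$-continuous characters of $A$, restriction sets up a bijection between $\Psi_A$ and $\Phi_{\widetilde A}$. Second, writing $r(f)=\lim_{n\to\infty}\jnorm{f^n}^{1/n}$ for the spectral radius --- the limit exists in any normed algebra by submultiplicativity of $(\jnorm{f^n})$ --- we have $r(f)=\max_{\varphi\in\Phi_{\widetilde A}}|\varphi(f)|=\max_{\varphi\in\Psi_A}|\varphi(f)|$, because the spectral radius is unchanged on completion and, in the Banach algebra $\widetilde A$, equals the maximum modulus of characters. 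I would first record the elementary bound $|\varphi(f)|\le r(f)$ for each $\varphi\in\Psi_A$ (from $|\varphi(f)|^n=|\varphi(f^n)|\le\jnorm{\varphi}\,\jnorm{f^n}$ and taking $n$th roots) together with $r(f)\ge\unorm{f}_X$ (from $\jnorm{f^n}\ge\unorm{f^n}_X=\unorm{f}_X^n$), and I would observe that, by homogeneity of $r$, condition (b) is equivalent to the assertion that $r(f)=\unorm{f}_X$ for every $f\in A$.

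For the forward implication, suppose $A$ is natural, so that $\Psi_A=\{\eps_x:x\in X\}$. To obtain (b) I compute $r(f)=\max_{\varphi\in\Psi_A}|\varphi(f)|=\max_{x\in X}|f(x)|=\unorm{f}_X$, which is (the homogeneous form of) condition (b). To obtain (a), I take $\psi\in\Phi_B$; since $\unorm{\cdot}_X\le\norm$ on $A$, the restriction $\psi|_A$ lies in $\Psi_A$ and hence equals some $\eps_x$. As $\psi$ and $\eps_x$ are then continuous characters on $B$ agreeing on the uniformly dense subalgebra $A$, they coincide on $B$, so $B$ is natural.

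For the reverse implication, assume (a) and (b) and take any $\varphi\in\Psi_A$. By (b) together with the bound above, $|\varphi(f)|\le r(f)=\unorm{f}_X$ for all $f\in A$, so $\varphi$ is continuous for the uniform norm and therefore extends, along the dense inclusion $A\hookrightarrow B$, to a bounded linear functional on $B$; continuity of the product shows this extension is multiplicative and non-zero, hence a character of $B$. By (a) this extension is $\eps_x$ for some $x\in X$, whence $\varphi=\eps_x$ on $A$. Thus every continuous character on $A$ is an evaluation, i.e. $A$ is natural.

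The main obstacle is the identity $r(f)=\max_{\varphi\in\Psi_A}|\varphi(f)|$ for a possibly \emph{incomplete} $A$. The elementary argument gives only $|\varphi(f)|\le r(f)$; the reverse inequality requires producing enough characters, which may fail inside an incomplete algebra but is restored by passing to $\widetilde A$, where Gelfand theory applies. The care needed is in verifying that the continuous characters of $A$ are precisely the restrictions of characters of $\widetilde A$, and that the spectral radius is genuinely preserved under completion --- both routine, since $\norm$ and the sequence $(\jnorm{f^n})$ are unchanged. I note that this identity is needed only for condition (b) in the forward direction; the reverse direction uses just the easy inequality $|\varphi(f)|\le r(f)$ together with (b).
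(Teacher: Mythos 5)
Your proof is correct. There is nothing in the paper to compare it against: the proposition is quoted from Honary's paper with a citation and an end-of-proof box, and no argument is reproduced in the text. Your route is the standard one, and it correctly handles the one genuine subtlety of the statement as formulated here, namely that $(A,\|\cdot\|)$ is only assumed to be a \emph{normed} function algebra, so Gelfand theory cannot be applied to $A$ itself. Passing to the completion $\widetilde{A}$, identifying $\Psi_A$ with $\Phi_{\widetilde{A}}$ via restriction and extension, and invoking the Beurling--Gelfand formula $r(f)=\max_{\varphi\in\Phi_{\widetilde{A}}}|\varphi(f)|$ there is exactly what is needed; note that this works even though $\widetilde{A}$ need not be a function algebra on $X$ at all --- it can even fail to be semisimple, as with $\widetilde{D}^{(1)}(J)$ for Bishop's arc $J$ discussed later in the paper --- because your argument uses $\widetilde{A}$ only as an abstract commutative, unital Banach algebra. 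The details all check: the forward direction obtains (b) from $r(f)=\max_{\varphi\in\Psi_A}|\varphi(f)|=|f|_X$ and (a) from the uniform density of $A$ in $B$ (restriction of a character of $B$ lands in $\Psi_A$ because $|\cdot|_X\le\|\cdot\|$ on $A$); the reverse direction, as you observe, needs only the elementary estimate $|\varphi(f)|\le r(f)$ together with (b) in its homogeneous form $r(f)=|f|_X$, which makes each $\varphi\in\Psi_A$ uniformly continuous and hence extendable to a character of $B$, where (a) applies.
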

\smallskip
We now discuss (complex) differentiability for functions defined on compact plane sets.

\begin{definition}
Let $X$ be a perfect, compact plane set. A function $f \in C(X)$ is \textit{differentiable
at} a point $a\in X$ if
the limit
\[ f'(a) = \lim_{z\to a, \ z\in X}
\frac{f(z)-f(a)}{z-a} \]
exists.
\end{definition}

We call $f^\prime(a)$ the \textit{{\rm (}complex{\rm)} derivative} of $f$ at $a$.
Using this concept of derivative, we define the terms
\textit{differentiable on} $X$ and
\textit{continuously differentiable on} $X$
in the obvious way, and we denote the set of continuously differentiable
functions on $X$ by $\DD(X)$.
For $f \in \DD(X)$, set
\[
\|f\|=|f|_X + |f'|_X\,.
\]
Then $(\DD(X),\Jnorm)$ is immediately seen to be a normed function algebra on $X$.
We denote the completion of $\DDXN$ by $\wdiff{X}$.

The normed function algebra $\DDXN$ is often incomplete, even for fairly nice sets $X$.
For example, in \cite[Theorem 3.5]{Bland-Fein}, Bland and Feinstein gave an example of a rectifiable Jordan arc $J$
(as defined below) such that $(\DD(J),\norm)$ is incomplete. In the same paper it was shown
\cite[Theorem 2.3]{Bland-Fein} that
$\DDXN$ is incomplete whenever $X$ has infinitely many components; this last result was also proved in \cite[(3.1.10)(iii)]{Dales-Thesis}.
We shall give several further examples and results later when we investigate necessary and sufficient conditions for the completeness of
these normed algebras.

We now recall the standard definitions of pointwise regularity and
uniform regularity for compact plane sets.
We shall suppose that the reader is familiar with the elementary results and
definitions concerning rectifiable paths, including integration of continuous,
complex-valued functions along such paths; for more details see, for
example, Chapter 6 of \cite{Apostol}.

\begin{definition}
A \textit{path} in $\C$ is a
continuous function  $\gamma : [a,b] \rightarrow \C$,
where $a$ and $b$ are real numbers with $a<b\,$;
$\gamma$ is a path \textit{from
$\gamma(a)$ to $\gamma(b)$} with \textit{endpoints}
$\gamma^-=\gamma(a)$ and $\gamma^ +=\gamma(b)$; in this case, $\gamma^-$ and $\gamma^+$ are  \textit{connected by} $\gamma$.
We denote by $\Jim{\gamma}$ the image $\gamma([a,b])$ of $\gamma$.
A \textit{subpath} of $\gamma$ is any path obtained by restricting
$\gamma$ to a non-degenerate, closed subinterval of $[a,b]$.  A path in $\C$ is \textit{admissible} if it is rectifiable and has no
constant subpaths.  For a subset  $X$ of $\C$, a \textit{path in $X$} is a path $\gamma$ in $\C$ such that
$\Jim{\gamma} \subseteq X$. We also say that such a path is \emph{a path in $X$ from $\gamma^-$ to $\gamma^+$}, and that $\gamma^-$ and $\gamma^+$ are \emph{connected in $X$ by $\gamma$}.
\smallskip
The length of a rectifiable path $\gamma$ will be denoted by $|\gamma|$.
The length of a non-rectifiable path is defined to be $\infty$.
\end{definition}

Note that we
distinguish between a path $\gamma$ and its image $\Jim{\gamma}$. This is because it is possible for two very different paths to have the same image. There is a lack of consistency in the literature over the usage of the terms `path', `curve', and `arc'. For us, an \emph{arc in $X$} \jchange is the image of a \emph{non-constant} path in $X$.
A \emph{rectifiable arc in $X$} is the image of a non-constant, rectifiable path in $X$. A \emph{Jordan path in} $X$ is a path $\gamma$ in $X$ such that $\gamma$ is injective; a \emph{Jordan arc in} $X$ is the image of a Jordan path in $X$.
We define \emph{admissible arc in $X$} \jchange and \emph{rectifiable Jordan arc in $X$} similarly.

Let $X$ be a compact plane set, and let $z,w \in X$ with \jchange $z \neq w$. Suppose that $z$ and $w$ are connected in $X$ by a path $\gamma$. Then there is also a Jordan path in $X$ from $z$ to $w$.
This does not appear to be immediately obvious; see \cite[Problem 6.3.12(a)]{Engelking}.

Let $\gamma$ be a non-constant, rectifiable path in $\C$, with length $L$. Although $\gamma$ need not itself be admissible, nevertheless there is always a path $\gammatilde:[0,L]\rightarrow \C$ such that $\gammatilde$ is admissible, $\gammatilde$ has the same endpoints and image as $\gamma$, and $\gammatilde$ is parametrized by arc length. (See, for example,
\cite[pp.~109--110]{Federer}.)
Such a path $\gammatilde$ is necessarily Lipschitzian, with Lipschitz constant $1$, in the sense that
$|\gammatilde(s)-\gammatilde(t)| \leq |s-t|$ whenever $0 \leq s < t \leq L$.
In particular, $\Jim{\gammatilde}$, and hence also $\Jim{\gamma}$, must have zero area.

Recall that a path $\gamma=\alpha+\i\beta$ (where $\alpha$ and $\beta$ are real-valued) is rectifiable if and only if both $\alpha$ and $\beta$ are of bounded variation \cite[Theorem 6.17]{Apostol}.
In this case \[\int_\gamma f = \int_\gamma f(z)\,\rd z\] is defined as a Riemann--Stieltjes integral for all $f \in C(\Jim{\gamma})$ \cite[Theorems 7.27 and 7.50 (see also p. 436)]{Apostol}.

\begin{definition}
Let $X$ be a compact plane set, and let $z,w \in X$ be points which are connected by a rectifiable path in $X$. Then
\[
 \delta (z,w) = \inf\{\lv \gamma \rv: \gamma\text{ is a rectifiable path from $z$ to $w$ in $X$}\}\,.
\]
We call $\delta(z,w)$ the \jnew \emph{geodesic distance} between $z$ and $w$ in $X$.
The set  $X$ is \textit{rectifiably connected} if,
 for all $z$ and $w$ in $X$, there is a rectifiable path $\gamma$ connecting  $z$ to $w$ in $X$.
\end{definition}\smallskip

Note that every rectifiably connected, compact plane set with more than one point is perfect.

Suppose that $X$ is rectifiably connected.
Then $\delta$ is certainly a metric on $X$, \jnew and it is well-known that the infimum in the definition of $\delta$ is always attained. Thus, for each pair of distinct points $z,w \in X$, there is a rectifiable path $\gamma$ connecting $z$ to $w$ in $X$ such that $|\gamma|=\delta(z,w)$. In this case it is clear that $\gamma$ is a Jordan path in $X$.

A rectifiably connected, compact plane set $X$ is \textit{geodesically bounded} if $X$ is
bounded with respect to the metric $\delta$.
In this case the \textit{geodesic diameter} of $X$ is defined to be
\[
\sup\{\delta(z,w): z,w\in X\}\,.
\]
Easy examples show that rectifiably connected, \jchange compact plane sets need not be geodesically bounded.

\begin{definition}
Let $X$ be a compact plane set. For $z \in X$, the set $X$ is \textit{regular at} $z$
if there is a constant $k_{z} > 0$ such that,
for every $w \in X$, there is a rectifiable path $\gamma$ from $z$ to $w$ in $X$ with
$\Length{\gamma} \leq k_{z}|z-w|$.

The set $X$ is \textit{pointwise regular}
if $X$ is regular at every point $z \in X$, and $X$ is \textit{uniformly regular} if,
further, there is one constant $k > 0$ such that, for all $z$ and $w$ in
$X$,
there is a rectifiable path $\gamma$ from $z$ to $w$ in $X$ with
$\Length{\gamma} \leq k|z-w|$.
\end{definition}
\smallskip
Note that every convex, compact plane set is obviously uniformly regular and every pointwise regular, compact plane set is geodesically bounded.

Dales and Davie \cite[Theorem 1.6]{Dales-Davie} showed that $\DDXN$ is complete whenever
$X$ is a finite union of
uniformly regular, compact plane sets.
However, as observed in \cite{Dales-Thesis} and \cite{Honary-Mahyar1}, the proof given
in \cite{Dales-Davie}
is equally valid for pointwise regular, compact plane sets.
This gives the following result.

\begin{proposition}
\label{pr-sufficient}
Let $X$ be a finite union of pointwise regular, compact plane sets.
Then $(\DD(X),\Jnorm)$ is complete.
\QED
\end{proposition}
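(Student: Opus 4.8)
The plan is to follow the route of Dales and Davie \cite[Theorem 1.6]{Dales-Davie}: first establish completeness of $(\DD(Y),\norm)$ for a single pointwise regular, compact plane set $Y$, and then deduce the statement for finite unions. So suppose first that $Y$ is pointwise regular and that $(f_n)$ is a Cauchy sequence in $(\DD(Y),\norm)$. Since $\norm{h}=\unorm{h}_Y+\unorm{h'}_Y$ for $h\in \DD(Y)$, both $(f_n)$ and $(f_n')$ are uniformly Cauchy on $Y$, and hence converge uniformly to functions $f,g\in C(Y)$, respectively. It would then remain to show that $f\in \DD(Y)$ with $f'=g$; completeness follows at once, since $\norm{f_n-f}=\unorm{f_n-f}_Y+\unorm{f_n'-g}_Y\to 0$.

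The key analytic input I would use is the fundamental theorem of calculus along rectifiable paths: for every $h\in\DD(Y)$ and every rectifiable path $\gamma$ in $Y$ from $z$ to $w$, one has $h(w)-h(z)=\int_\gamma h'$. Applying this to each $f_n$ and passing to the limit, justified by $\unorm{\int_\gamma(f_n'-g)}\le\unorm{\gamma}\,\unorm{f_n'-g}_Y\to 0$, yields $f(w)-f(z)=\int_\gamma g$ for all such $z,w$. I would then fix $a\in Y$, take a regularity constant $k_a>0$ at $a$, and for $w\neq a$ choose a rectifiable path $\gamma$ from $a$ to $w$ in $Y$ with $\unorm{\gamma}\le k_a\unorm{w-a}$. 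Subtracting $g(a)(w-a)=\int_\gamma g(a)\dd z$ (using $\int_\gamma 1\dd z=w-a$) and estimating gives
\[
\lv \frac{f(w)-f(a)}{w-a}-g(a)\rv \le \frac{\unorm{\gamma}}{\unorm{w-a}}\,\sup_{z\in\Jim{\gamma}}\unorm{g(z)-g(a)} \le k_a\sup_{z\in\Jim{\gamma}}\unorm{g(z)-g(a)}\,.
\]
As $\Jim{\gamma}$ lies in the closed disc of radius $\unorm{\gamma}\le k_a\unorm{w-a}$ about $a$ and $g$ is continuous at $a$, the right-hand side tends to $0$ as $w\to a$. Hence $f$ is differentiable at $a$ with $f'(a)=g(a)$; as $a$ is arbitrary and $g$ is continuous, $f\in\DD(Y)$ with $f'=g$.

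For the finite union, I would write $X=X_1\cup\cdots\cup X_m$ with each $X_j$ pointwise regular (hence rectifiably connected, and perfect whenever it has more than one point), and take $(f_n)$ Cauchy in $(\DD(X),\norm)$ with uniform limits $f$ and $g$ as above. For each $j$ the restriction map $h\mapsto h|_{X_j}$ is norm-decreasing and sends derivatives to restrictions of derivatives, so $(f_n|_{X_j})$ is Cauchy in $\DD(X_j)$ with uniform limits $f|_{X_j}$ and $g|_{X_j}$; by the single-set argument, $f|_{X_j}$ is differentiable on $X_j$ with derivative $g|_{X_j}$. To pass to differentiability of $f$ at a given $a\in X$, I would use that the constituent sets are closed, so some neighbourhood of $a$ meets only those $X_j$ of which $a$ is a limit point; for each such $j$ the difference quotient along $X_j$ converges to $g(a)$, and as only finitely many indices occur, the difference quotient along all of $X$ converges to $g(a)$ too. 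Thus $f\in\DD(X)$ with $f'=g$, and $(\DD(X),\norm)$ is complete.

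I expect the main obstacles to be twofold. The first is securing the fundamental-theorem-of-calculus identity along rectifiable paths for the merely continuously differentiable (not necessarily smooth) functions of $\DD(Y)$, since that identity underpins the whole estimate; this rests on the arc-length parametrization being Lipschitzian and on $h\circ\gammatilde$ being absolutely continuous. The second, and more delicate to write out carefully, is the bookkeeping in the finite-union step: one must verify that near $a$ only the finitely many pieces having $a$ as a limit point contribute to the difference quotient, so that the finitely many pointwise limits can be combined into a single limit over $X$.
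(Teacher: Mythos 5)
Your proof is correct, and it is essentially the paper's own approach: the paper proves this proposition simply by citing Dales and Davie \cite[Theorem 1.6]{Dales-Davie} together with the observation (from \cite{Dales-Thesis} and \cite{Honary-Mahyar1}) that their argument works verbatim for pointwise regular sets, and your argument is precisely a reconstruction of that proof, resting on the fundamental theorem of calculus along rectifiable paths and the estimate $\lv f(w)-f(a)-g(a)(w-a)\rv \leq k_a \lv w-a\rv \sup_{z\in\Jim{\gamma}}\lv g(z)-g(a)\rv$. The finite-union bookkeeping you flag is handled exactly as you suggest, using that each non-singleton pointwise regular piece is closed, connected, and perfect.
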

\smallskip
One purpose (unfortunately not achieved) of the present paper is to decide whether or not the converse of
Proposition \ref{pr-sufficient} holds true.

Note that whenever a \emph{connected} compact, plane set $X$ is a finite union of pointwise regular, compact plane sets, $X$ itself is already pointwise regular. The corresponding statement concerning uniform regularity is, however, false.

\begin{definition}
Let $(X,d)$ be a compact metric space with more than one  point.  Then a function $f\in C(X)$ belongs to the \emph{Lipschitz space}, ${\rm Lip\,}X$, if
\[
p(f) : = \sup\left\{\frac{\lv f(z)-f(w)\rv}{d(z,w)} : z,w \in X, \,z\neq w\right\} < \infty\,.
\]
\end{definition}
It is standard that ${\rm Lip\,}X$ is, in fact, a natural Banach function algebra on $X$ with respect to the norm $\norm$ given by
\[
\lV f \rV = \lv f \rv_X + p(f)\quad (f \in {\rm Lip\,}X)\,.
\]
For details of these algebras, and of their relatives ${\rm lip\,}X$, see for example \cite[\S 4.4]{Dales} and \cite{Weaver}.

Let $X$ be a compact plane set $X$. We give $X$ the usual
Euclidean metric
\[
d(z,w)=|z-w|\quad(z,w \in X)\,,
\]
and define the Banach function algebra $\Lip X$ accordingly.
Now suppose, in addition, that $X$ is rectifiably connected. Then it is noted in
\cite[Lemma 1.5(i)]{Dales-Davie} that
\[
\lv f(z)-f(w)\rv  \leq \delta(z,w)\lv f'\rv_X\quad (z,w\in X,f \in D^{(1)}(X))\,.
\]
Thus, in the case where $X$ is uniformly regular, $D^{(1)}(X)$ is a closed sub\-algebra of $({\rm Lip\,}X, \norm)$.

\smallskip
We now recall the definitions of the uniform algebras $P(X)$, $R(X)$, and $A(X)$ for compact plane sets $X$, and
some standard results concerning these algebras. We refer the reader to \cite[\S4.3]{Dales}, \cite[Chapter II]{Gamelin} and \cite{Stout} for further details.

Let $X$ be a compact plane set.
The \emph{polynomially convex hull} of $X$, denoted by $\widehat X$, is the complement of the unbounded component of
$\C \setminus X$.
The \emph{outer boundary} of $X$ is the boundary of $\widehat X$.
The set $X$ is \emph{polynomially convex} if $\C \setminus X$ is connected.

The spaces of restrictions to $X$ of the polynomial functions and of the rational functions with poles off $X$ are denoted by $P_0(X)$ and $R_0(X)$, respectively.
The closures of these spaces in $(C(X), \lv \,\cdot\,\rv_X) $ are the uniform algebras $P(X)$ and $R(X)$, respectively. The algebra $R(X)$ is always natural; the character space of $P(X)$ is identified with $\widehat{X}$, and so $P(X)$  is natural if and only if
$X= \widehat{X}$, i.e., if and only if  $X$ is polynomially convex.

For a non-empty, open subset $U$ of $\C$, we write $O(U)$ for the algebra of analytic functions on $U$.

Let  $X$ be a compact plane set. We denote by $O(X)$ the set of restrictions to $X$ of functions which are analytic on some neighbourhood of $X$. Thus $g \in O(X)$ if and only if there are an open neighbourhood $U$ of $X$ and a function $f \in O(U)$ with $f|_X = g$.

Now let $X$ be a compact plane set with interior $U$. Then $A(X)$ is the uniform algebra of all continuous functions on $X$ such that $f\mid U \in O(U)$.  By a theorem of Arens (see  \cite[Theorem 4.3.14]{Dales} or \cite[Chapter II, Theorem 1.9]{Gamelin}), $A(X)$ is a natural  uniform algebra on $X$.  It may be that $R(X)\subsetneq A(X)$ (see \cite[Chapter VIII, \S8]{Gamelin}); however  $R(X) = A(X)$ whenever $\C \setminus X$ has only finitely many components. Let $A$ be a uniform algebra on $X$ with $R(X) \subseteq A \subseteq A(X)$.  Then we do not know whether or not $A$ is necessarily natural; if this were always the case, then some of our later open questions would be easily resolved.

In the case where $X$ is polynomially convex, Mergelyan's theorem \cite[Chapter II, Theorem 9.1]{Gamelin} tells us that $P(X)=A(X)$. In particular, when $X$ is polynomially convex and has empty interior, we have $P(X)=C(X)$. (This latter fact is Lavrentiev's theorem \cite[Chapter II, Theorem 8.7]{Gamelin}.)
\sskip
We conclude this section by recalling the definition
of some related spaces which were discussed
in \cite{Bland-Fein}.

Note that it is obvious that whenever $X$ is a compact plane set such that $\Tint X$ is dense in $X$,
then $X$ is perfect.\smallskip

\begin{definition}
Let $X$ be a compact plane set such that $\Tint X$ is dense in $X$.
Set $U=\Tint X$.
Then $\Aone(X)$ is the set of functions $f$ in $A(X)$ such that
$(f|_U)'$ extends continuously to the whole of $X$. In this case we set
\[
\|f\|=|f|_X + |f'|_U\,\quad(f \in \Aone(X))\,.
\]
\end{definition}\smallskip

Clearly $\DDX \subseteq \Aone(X)$, and $(\Aone(X),\Jnorm)$ is a normed function algebra on $X$.
Moreover, it is easy to see that $(\Aone(X),\Jnorm)$ is complete, and hence a Banach function algebra on $X$.

In the following sections, we shall discuss the relationships
 between all of the algebras so far discussed.

\section{Inclusion relationships between the algebras}
\noindent
Let $X$ be a perfect, compact plane set.
Certainly we have the inclusions
\[
P_0(X) \subseteq R_0(X) \subseteq O(X) \subseteq \DD(X) \subseteq A(X)\,.
\]
In particular,
$(Z-w1)^{-1}$ belongs to $\DDX$ whenever $w \in \C \setminus X$.

It is an elementary consequence of Runge's theorem \cite[p.~35]{Bonsall-Duncan} that the closures of $R_0(X)$ and $O(X)$ in $(\DD(X),\norm)$ are always the same. Similarly, if $X$ is polynomially convex, then both of these $\norm$-closures are equal to the closure of $P_0(X)$ in $(\DD(X),\norm)$. (Part of this was noted on page 106 of \cite{Bland-Fein}.)

We do not know whether
or not we always have $D^{(1)}(X)\subseteq  R(X)$.
Nor do we know whether or not $R_0(X)$ is always dense in $(\DD(X),\Jnorm)$.
Since we have $D^{(1)}(X)\subseteq  A(X)$, clearly
$D^{(1)}(X)\subseteq  R(X)$ whenever $R(X) = A(X)$.

We do have the following easy result from \cite[Lemma 1.5]{Dales-Davie}.\smallskip

\begin{proposition}
\label{sub-R(X)}
Let $X$ be a uniformly regular,
compact plane set.
Then the inclusion $D^{(1)}(X)\subseteq  R(X)$ holds.
\QED
\end{proposition}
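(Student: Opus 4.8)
The plan is to show that every $f \in \DD(X)$ is a uniform limit on $X$ of functions in $O(X)$. Since, by Runge's theorem, each function analytic on a neighbourhood of $X$ is approximable uniformly on $X$ by rational functions with poles off $X$, we have $O(X) \subseteq R(X)$; as $R(X)$ is uniformly closed, this will yield $f \in R(X)$. The main tool is the Cauchy--Green (Cauchy--Pompeiu) representation applied to a smooth extension of $f$ to the plane.

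First I would construct a compactly supported $\tilde f \in \Cone(\R^2)$ extending $f$ and chosen so that its differential on $X$ agrees with the complex derivative $f'$; concretely, $\tilde f$ should be a Whitney $C^1$-extension of the $1$-jet $(f,f')$, multiplied by a smooth cutoff equal to $1$ near $X$. For this to be legitimate one needs the \emph{uniform} first-order estimate
\[
f(z)-f(w)-f'(w)(z-w) = o(|z-w|)\quad(|z-w|\to 0,\ \text{uniformly in }z,w\in X)\,,
\]
and it is exactly here that uniform regularity enters: joining $z$ to $w$ by a path $\gamma$ in $X$ with $|\gamma|\le k|z-w|$ and using $\int_\gamma f'(\zeta)\,\rd\zeta = f(z)-f(w)$ and $\int_\gamma \rd\zeta = z-w$, we obtain
\[
f(z)-f(w)-f'(w)(z-w) = \int_\gamma\bigl(f'(\zeta)-f'(w)\bigr)\,\rd\zeta\,,
\]
whose modulus is at most $k|z-w|\,\omega(k|z-w|)$, where $\omega$ is a modulus of continuity for $f'$ on $X$ and every $\zeta$ on $\gamma$ satisfies $|\zeta-w|\le|\gamma|\le k|z-w|$. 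With such a $\tilde f$ in hand, $u := \overline\partial\tilde f$ is continuous, compactly supported, and vanishes on $X$, its value at a point of $X$ being the $\overline\partial$ of the complex-linear differential $f'$. The Cauchy--Green formula then gives, for all $z\in X$,
\[
f(z) = \tilde f(z) = -\frac{1}{\pi}\int_{\C}\frac{u(w)}{w-z}\,\dd A(w)\,.
\]

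Next I would split this integral at a small scale $\eta>0$. Writing $V_\eta = \{w\in\C : \dist(w,X)<\eta\}$, the contribution of $\C\setminus V_{2\eta}$ is, as a function of $z$, analytic on $V_\eta \supseteq X$, since $|w-z|\ge\eta$ there and the kernel $1/(w-z)$ is holomorphic in $z$; hence it restricts to a member of $O(X)$. For the contribution of $V_{2\eta}$, set $\rho(\eta)=\sup\{|u(w)| : w\in V_{2\eta}\}$, which tends to $0$ as $\eta\to 0^+$ because $u$ is uniformly continuous and vanishes on $X$; together with the elementary bound $\int_{|w-z|\le R}|w-z|^{-1}\,\dd A(w) = 2\pi R$, where $R$ bounds the diameter of $\operatorname{supp}u$, this gives a uniform estimate $\le 2\pi R\,\rho(\eta)$ on $X$. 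Letting $\eta\to 0$ exhibits $f$ as a uniform limit on $X$ of functions in $O(X)\subseteq R(X)$, and the closedness of $R(X)$ finishes the proof.

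The step I expect to be the main obstacle is the construction of the extension $\tilde f$ with $\overline\partial\tilde f$ continuous and vanishing on $X$: one must verify the uniform Whitney condition displayed above, and it is precisely the uniform regularity hypothesis, through the geodesic bound $|\gamma|\le k|z-w|$, that upgrades the pointwise differentiability of $f\in\DD(X)$ to the required uniform first-order estimate. Once the extension is available, the subsequent splitting of the Cauchy transform is routine.
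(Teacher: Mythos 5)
Your argument is correct. It cannot be compared line-by-line with the paper's proof for the simple reason that the paper gives none: Proposition~\ref{sub-R(X)} is stated with a reference to \cite[Lemma 1.5]{Dales-Davie} and no further argument. What you have written is, in essence, the classical argument underlying that citation: a $\Cone(\R^2)$ Whitney extension of the $1$-jet $(f,f')$ (so that the differential on $X$ is the complex-linear map $h \mapsto f'(w)h$, making $\overline\partial\tilde f$ vanish on $X$), followed by the Cauchy--Pompeiu transform split at scale $\eta$, with the far part analytic on a neighbourhood of $X$ (hence in $R(X)$ by Runge) and the near part uniformly small on $X$ because $\overline\partial\tilde f$ is continuous and vanishes on $X$. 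Both steps are correctly executed, and the point most worth emphasizing is that you have located the hypothesis exactly where it is needed: the geodesic bound $|\gamma| \le k|z-w|$, fed into the identity $f(z)-f(w)-f'(w)(z-w)=\int_\gamma \bigl(f'(\zeta)-f'(w)\bigr)\,\rd\zeta$, is what upgrades pointwise differentiability to the \emph{uniform} Whitney condition (and, in passing, shows $\DD(X)\subseteq\Lip X$ here). This is precisely the step that fails for merely pointwise regular sets: Theorem~\ref{bad-arc} of the paper exhibits a pointwise regular Jordan arc $J$ and $f\in\DD(J)$ with no $\Cone(\R^2)$ extension (indeed with $f\notin\Lip J$), which is the paper's diagnosis of the flawed Whitney argument in \cite{Honary-Mahyar2}. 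So your proof is not only complete but is consistent with the paper's own counterexample at the only point where such an approach could have gone wrong.
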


Theorem 3 of \cite{Honary-Mahyar2} appears to claim that $D^{(1)}(X)\subseteq  R(X)$ for each perfect, compact plane set, or perhaps for each
 perfect, compact plane set $X$ such that $X$ is pointwise  regular.
 However the proof is based on an invalid use of Whitney's extension theorem \cite[Chapter I, Theorem 3.2]{Malgrange}  in an attempt to show that every function in $\DDX$ has an extension which lies in $\Cone(\R^2)$. \jchange (Here we are identifying $\C$ with $\R^2$ in the usual way.)
The following example, which is a modification of \cite[Example 2.4]{Bland-Fein}, shows that functions in $\DDX$ need not have such extensions, even in the case where $X$ is a pointwise regular Jordan arc.

\begin{figure}[htb]
\input{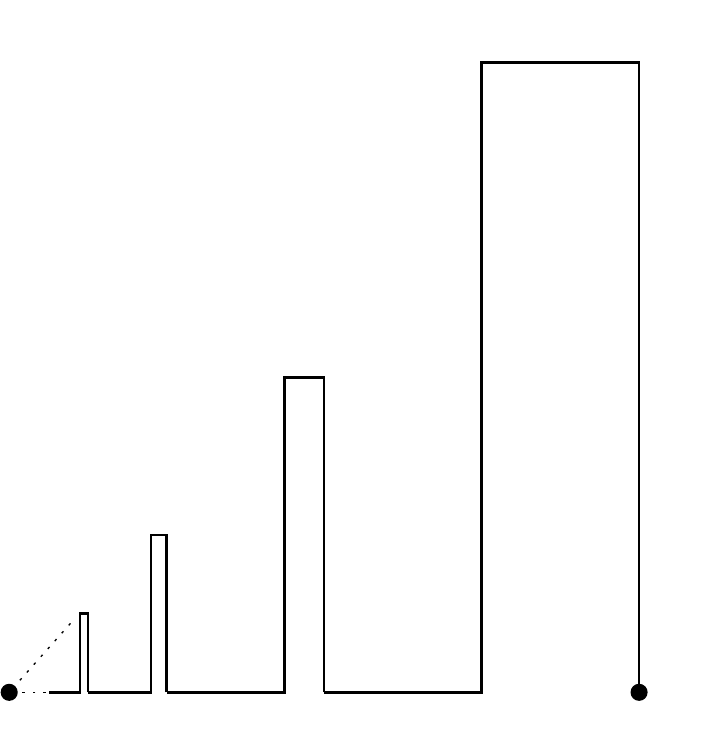_t}
\caption{The arc $J$ constructed in Theorem \ref{bad-arc} (not to scale)}
\label{counter-mh}
\end{figure}

 \begin{theorem}
 \label{bad-arc}
There exist a pointwise  regular Jordan arc $J$ and a function $f$ in $D^{(1)}(J)$ such that there is no function $F$ in \jchange $\Cone(\R^2)$ with $F|_J = f$.
\end{theorem}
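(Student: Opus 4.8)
The plan is to construct $J$ as a polygonal Jordan arc in the closed upper half-plane, consisting of a \emph{spine} along the real axis running from $z_1=1/2$ towards the endpoint $0$, together with a sequence of thin rectangular \emph{teeth} clustering at $0$. Concretely, I would place the $n$-th tooth over a short interval near $x_n=2^{-n}$, of width $\eta_n=16^{-n}$ and height $h_n=4^{-n}$, and choose $\eps_n:=2^{-n}$. Traversing $J$ from $1/2$ towards $0$, at the $n$-th tooth the path leaves the axis at a corner $z_n$, runs up to a corner $w_n$, across the top to $w_n'$, and back down to a corner $z_n'$ on the axis before continuing along the spine; this is exactly the shape in Figure~\ref{counter-mh}. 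Since the teeth occupy disjoint vertical strips and the total length $\sum_n(2h_n+\eta_n)$ is finite, $J$ is a genuine rectifiable Jordan arc with $0$ as an endpoint.

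The first thing I would check is that $J$ is pointwise regular but \emph{not} uniformly regular. Because $h_n=4^{-n}\ll x_n=2^{-n}$, from the endpoint $0$ every point $w$ of $J$ is reached by running along the spine and up at most one tooth, so $\delta(0,w)\le 2\Length{w}$; a point fixed away from $0$ reaches all of $J$ with a finite constant, since the only accumulation is at $0$ and $J$ has finite length. Hence $J$ is regular at every point, the constant being allowed to depend on the point. On the other hand, the base corners $z_n,z_n'$ satisfy $\Length{z_n-z_n'}=\eta_n$ while any path in $J$ joining them must climb the tooth, so $\delta(z_n,z_n')\ge 2h_n$ and $\delta(z_n,z_n')/\Length{z_n-z_n'}\ge 2\cdot 4^{n}\to\infty$. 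This mismatch is the whole point: pointwise regularity permits pairs that are Euclidean-close yet geodesically far, and such a pair will defeat any $\Cone(\R^2)$-extension.

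Next I would build the function. Choose any continuous $g$ on $J$ that vanishes on the spine and at the base corners, equals $\eps_n$ at $w_n$ and $-\eps_n$ at $w_n'$, and interpolates linearly in arc length along the two sides and the top of each tooth; since $\eps_n\to 0$, this $g$ is continuous on $J$ with $g(0)=0$. Set $f(z)=\int_{1/2}^{z}g\dd w$, the integral along $J$, which is well defined as $J$ is an arc. Then $f\in\DD(J)$ with $f'=g$: for $a\in J$ and $z\to a$ in $J$,
\[ \left|\frac{f(z)-f(a)}{z-a}-g(a)\right|\le\frac{\Length{\gamma_{a,z}}}{\Length{z-a}}\,\sup_{w\in\Jim{\gamma_{a,z}}}\Length{g(w)-g(a)}\,, \]
where $\gamma_{a,z}$ is the subpath of $J$ from $a$ to $z$; since $J$ is polygonal, every point lies on at most two straight edges, so $\Length{\gamma_{a,z}}=\Length{z-a}$ for $z$ near $a$, and the right-hand side tends to $0$ by continuity of $g$. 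Corners cause no trouble, as both one-sided limits equal $g(a)$.

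Finally I would extract the contradiction. Along the $n$-th tooth the top edge contributes nothing ($g$ runs from $\eps_n$ to $-\eps_n$ with mean zero), while each vertical side contributes $\i\eps_n h_n/2$, so
\[ f(z_n')-f(z_n)=\i\,\eps_n h_n\,,\qquad\text{hence}\qquad\frac{\Length{f(z_n')-f(z_n)}}{\Length{z_n'-z_n}}=\frac{\eps_n h_n}{\eta_n}=2^{n}\longrightarrow\infty\,. \]
If some $F\in\Cone(\R^2)$ satisfied $F|_J=f$, then $F$ would be Lipschitz near $0$: by the mean value theorem applied to its real and imaginary parts along $[p,q]$, there are $C,r>0$ with $\Length{F(p)-F(q)}\le C\Length{p-q}$ whenever $\Length{p},\Length{q}\le r$. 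As $z_n,z_n'\to 0$, for large $n$ this forces $\Length{f(z_n')-f(z_n)}\le C\eta_n$, contradicting the displayed ratio; so no such $F$ exists. I expect the main obstacle to be the simultaneous tuning of the parameters: $h_n$ must be small relative to $x_n$ to keep the constant at $0$ finite, $\eta_n$ must shrink fast enough relative to $h_n\eps_n$ to make the quotient blow up, and all the series must converge so that $J$ is a rectifiable Jordan arc. Verifying that the single choice above meets all these demands and keeps the teeth disjoint and simple is the routine-but-delicate heart of the argument.
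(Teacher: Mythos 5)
Your proposal is correct and follows essentially the same route as the paper: a comb-shaped Jordan arc with teeth whose bases are Euclidean-tiny but whose geodesic detours are much longer, a function in $\DD(J)$ whose values jump across each narrow base by far more than a Lipschitz bound allows, and the observation that any $\Cone(\R^2)$ extension would force $f$ to be Lipschitz near $0$. The only (immaterial) difference is in how the function is built -- you integrate a prescribed piecewise-linear derivative along the arc and verify $f'=g$ via the bounded arc-to-chord ratio, whereas the paper prescribes the values directly and interpolates with a cosine on one vertical side -- but the geometry and the contradiction are the same.
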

\begin{proof}
An example of such an arc $J$ is shown in Figure \ref{counter-mh}, above.

For $n \in \N$, set
\[
z_n = x_n = 2^{-n}\,,\quad z_n'=2^{-n}-2^{-3n}\,,
\]
and \jchange
\[
w_n= 2^{-n} + 2^{-n}\,\i\,,\quad w_n'= 2^{-n}-2^{-3n} + 2^{-n}\,\i\,.
\]
Note that we have
\[
z_n'=z_n-2^{-3n}=x_n-2^{-3n}\,,\quad w_n=z_n+2^{-n}\,\i\,,
\]
and
\[
w_n'=z_n'+2^{-n}\,\i = w_n - 2^{-3n}\,.
\]
Let $J_n$ be the Jordan arc made up of the four straight lines joining, successively, $z_n$, $w_n$, $w_n'$, $z_n'$, and $z_{n+1}$. It is clear that we may obtain a pointwise regular Jordan arc $J$ by gluing together all of the arcs $J_n$ $(n \in \N)$, and then adding in the point $0$.

For $n \in \N$, set $c_n=x_n/n$. We now \emph{claim} that there exists $f \in \DD(X)$ such that, for all $n \in \N$, $f(z_n)= c_n$ while
$f(z_n') = c_{n+1}$. This may be achieved by setting $f(0)=0$, and defining $f$ on each $J_n$ separately as follows.
For $n \in \N$, let $V_n$ be the straight line joining $z_n$ to $w_n$. For $z \in J_n \setminus V_n$, set
$f(z)=c_{n+1}$. For $z=x_n+\i y \in V_n$, set
$f(z)=a + b \cos(2^n \pi y)$, where $a= (c_n+c_{n+1})/2$  and $b = (c_n - c_{n+1})/2$.
It is now easy to check that $f$ satisfies the conditions of the claim.

Obviously, the restriction to $J$ of any function \jchange in $\Cone(\R^2)$ must be in $\Lip J$. However, we have
\[
\frac{|f(z_n)-f(z'_n)|}{|z_n-z'_n|} = \frac{c_n-c_{n+1}}{2^{-3n}} = 2^{2n-1} \frac{n+2}{n(n+1)} \to \infty\quad\text{as}\:\,n \to \infty\,,
\]
and so these quotients are unbounded. Thus $f$ is not in $\Lip J$, and it follows that $f$ has no extension in \jchange $\Cone(\R^2)$, as required.
\end{proof}
Of course, for the arc $J$ constructed in Theorem \ref{bad-arc}, we do have
\[
\DD(J) \subseteq P(J)=R(J)=C(J)\,.
\]
\sskip
Let $X$ be a compact plane set with dense interior. It is obvious that the inclusion $\DD(X) \subseteq \Aone(X)$ is an isometric inclusion of $\DDXN$ in $(\Aone(X),\norm)$.
Thus the completion of $(\DD(X),\Jnorm)$, namely $\wdiff{X}$, is just the $\norm$-closure of $\DDX$ in $\Aone(X)$.

We do not know whether or not we always have $\Aone(X)\subseteq R(X)$. However, it is obvious that $\Aone(X) \subseteq A(X)$, and so we have $\Aone(X) \subseteq R(X)$ whenever $R(X)=A(X)$.
We shall see later, in Theorem \ref{Cantor-squares}, that $\DDX$, and hence also $R_0(X)$, need not be dense in $(\Aone(X),\Jnorm)$.

\section{Naturality}
\noindent We next show that $\DD(X)$ is natural for every perfect, compact plane set $X$. In addition, every character on $(\DD(X),\Jnorm)$ is continuous. The elementary proof is based on the method used by Jarosz in \cite{Jarosz} to prove that a certain Banach function algebra ${\rm Lip}_{Hol}(X,\alpha)$ is natural.

\smallskip
\begin{theorem}
Let $X$ be a perfect, compact plane set, and let $A$ be the normed function algebra $(\DD(X),\Jnorm)$.
Then $A$ is natural on $X$, and $\Phi_A=\Psi_A$.
\end{theorem}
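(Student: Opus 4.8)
The plan is to prove both assertions at once by showing that \emph{every} character $\phi$ on $A=\DDXN$, not assumed continuous a priori, has the form $\eps_\lambda$ for some $\lambda\in X$. Since $x\mapsto\eps_x$ is always a continuous embedding of $X$ into $\Psi_A$, this gives surjectivity (naturality) and forces every character to be continuous, i.e.\ $\Phi_A=\Psi_A$.

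First I would establish automatic continuity by a purely algebraic spectral computation, which is valid even though $\DDX$ is incomplete. For $f\in\DDX$ and $\mu\in\C\setminus f(X)$, the function $f-\mu 1$ has no zero on $X$, so $1/(f-\mu 1)\in C(X)$, and by the quotient rule for complex derivatives it in fact lies in $\DDX$, with derivative $-f'/(f-\mu 1)^2$; hence $f-\mu 1$ is invertible in $\DDX$. As $f-\mu 1$ is clearly non-invertible when $\mu\in f(X)$, the spectrum satisfies $\sigma_{\DDX}(f)=f(X)$. For any character $\phi$, the element $f-\phi(f)1$ lies in the proper ideal $\ker\phi$, so it is not invertible, whence $\phi(f)\in\sigma_{\DDX}(f)=f(X)$. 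Consequently $\unorm{\phi(f)}\le\unorm{f}_X\le\|f\|$, so $\phi$ is continuous and $\Phi_A=\Psi_A$; taking $f=Z$ gives $\lambda:=\phi(Z)\in X$. (One could instead invoke Proposition~\ref{Honary}: condition (b) is immediate, since $\|f^n\|=1+n\unorm{f^{n-1}f'}_X\le 1+n\unorm{f'}_X$ when $\unorm{f}_X=1$, so $\|f^n\|^{1/n}\to1$; but that route reduces naturality to naturality of the uniform closure $B$ of $\DDX$, which is not known in the generality $R(X)\subseteq B\subseteq A(X)$, so I would argue directly.)

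Next I would reduce the identity $\phi=\eps_\lambda$ to functions with a double zero. Given $f\in\DDX$, put $g=f-f(\lambda)1-f'(\lambda)(Z-\lambda 1)$; then $g\in\DDX$, $g(\lambda)=0$, $g'(\lambda)=0$, and since $\phi(Z-\lambda 1)=0$ we have $\phi(f)-f(\lambda)=\phi(g)$. Thus it suffices to show $\phi(g)=0$ whenever $g$ vanishes to second order at $\lambda$. The mechanism is that $(Z-\lambda 1)\DDX\subseteq\ker\phi$, because $\phi(Z-\lambda 1)=0$; and since $\phi$ is continuous for the uniform norm, it is enough to approximate $g$ in $\Unorm_X$ by elements $(Z-\lambda 1)h_n$ with $h_n\in\DDX$, for then $\phi(g)=\lim_n\phi((Z-\lambda 1)h_n)=0$. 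Writing $g=(Z-\lambda 1)q$ with $q\in C(X)$, $q(\lambda)=0$, and $q$ differentiable off $\lambda$, the task becomes to approximate $q$ by functions of $\DDX$ in the weighted norm $\unorm{(Z-\lambda 1)(\,\cdot\,)}_X$, whose weight vanishes at $\lambda$ and so provides slack there.

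I expect the last step to be the main obstacle, and this is exactly where the method of Jarosz enters. When $X$ has non-empty interior accumulating at $\lambda$, functions of $\DDX$ are holomorphic there, so the double zero makes the singularity of $q$ at $\lambda$ removable and $q\in\DDX$, whence one may simply take $h_n=q$. The genuinely delicate case is when $X$ is thin near $\lambda$ (for example of empty interior there): one must then construct $h_n\in\DDX$ that are complex-differentiable on \emph{all} of $X$ and yet match $q$ closely off a shrinking neighbourhood of $\lambda$ — for instance of resolvent type $h_n=g\,r_n$, where $r_n$ is a differentiable approximation to $1/(Z-\lambda 1)$ whose poles are placed in $\C\setminus X$. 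The difficulty is precisely that the approximants must respect complex differentiability throughout $X$ (hence holomorphy on any interior), so a naive real cut-off is unavailable; carrying out this construction uniformly in $\lambda$ is the heart of the argument.
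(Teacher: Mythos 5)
Your first two steps are correct: the spectral computation $\sigma_{\DDX}(f)=f(X)$ (via the quotient rule) does give $\lv\phi(f)\rv\le\lv f\rv_X$ for every character $\phi$, hence $\Phi_A=\Psi_A$, and taking $f=Z$ gives $\lambda=\phi(Z)\in X$; the reduction to showing $\phi(g)=0$ whenever $g(\lambda)=g'(\lambda)=0$ is also sound. But the proof then stops: the final step --- that every such $g$ is a uniform limit of elements $(Z-\lambda 1)h_n$ with $h_n\in\DDX$ --- is never established; you yourself label it \lq\lq the heart of the argument'' and only describe obstacles to carrying it out. This is a genuine gap, and not a routine one. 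Your candidate construction $h_n=g\,r_n$, with $r_n$ a resolvent-type approximation to $1/(Z-\lambda 1)$ having poles off $X$, requires points $\mu_n\in\C\setminus X$ with $\dist(\mu_n,X)$ comparable to $\lv \mu_n-\lambda\rv$; such points need not exist (this fails outright when $\lambda\in\Tint X$, and also at boundary points which the complement approaches only through very thin channels, as for the set $X_0$ built from osculating discs later in the paper). Your fallback for the thick case also needs $\lambda$ itself to lie in $\Tint X$, not merely that interior points accumulate at $\lambda$. So, as written, the argument is complete only in special cases.

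The missing step can be closed with no approximation at all, and this is what the paper does; its argument is purely algebraic and does not even need the continuity you established in your first step. If $f\in\DDX$ and $f(\lambda)=0$, then differentiability of $f$ at $\lambda$ and compactness give $\lv f(z)\rv\le C\lv z-\lambda\rv$ on $X$, and this \emph{cubic} vanishing of $f^3$ is exactly enough to ensure that the function $g$ equal to $f^3/(Z-\lambda 1)$ off $\lambda$, and to $0$ at $\lambda$, belongs to $\DDX$ with $g(\lambda)=g'(\lambda)=0$: one checks directly that $g$ is differentiable at $\lambda$ with derivative $0$, and that $g'$, computed by the quotient rule off $\lambda$, tends to $0$ at $\lambda$ (the estimate $\lv g'(z)\rv\le 3C^2\lv f'\rv_X\lv z-\lambda\rv+C^3\lv z-\lambda\rv$ does this). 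Hence $f^3=(Z-\lambda 1)g$ is an identity in the algebra, and multiplicativity alone gives $\phi(f)^3=\phi(Z-\lambda 1)\phi(g)=0$, so $\phi(f)=0$; applied to $f-f(\lambda)1$ this yields $\phi=\eps_\lambda$. Note that this factorization would also finish your own scheme (apply it to your double-zero function $g$, or skip the second-order reduction entirely): the point where you sought a delicate, $\lambda$-uniform approximation is precisely where a single algebraic identity suffices.
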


\begin{proof}
Let $\phi \in \Phi_A$, and set $w=\phi(Z)$.  Then $\phi(Z-w1) = 0$, and so $Z-w1$ is not invertible in $A$. Since $R_0(X) \subseteq A$, it follows that
$w \in X$.

We shall show that $\phi = \eps_w$. To see this, it is sufficient to prove the inclusion $\ker \eps_w  \subseteq \ker \phi$.
Take $f \in A$ with $f(w)=0$. Since $f$ is differentiable at $w$,
 there is a positive constant $C$ such that, for all $z \in X$, we have
$|f(z)|\leq C |z-w|$.
It is now easy to see that
$f^3=(Z-w)g$ for a (unique) function $g \in \DD(X)$ (with $g(w)=g'(w)=0$).
This gives \[(\phi(f))^3 = \phi(f^3) = \phi(Z-w)\phi(g)=0\,,
\]
and so $\phi(f)=0$.
The result follows.
\end{proof}

\smallskip

Let $X$ be a perfect, compact plane set with dense interior. We do not know \jchange whether or not $\Aone(X)$ is always natural.
Let $f$ be a function in $\Aone(X)$ with $\lv f \rv_X =1$.  We see that $1 \leq \lV f^n\rV \leq 1 + n|f'|_X =O(n)$, and so $\lim_{n\to\infty} \lV f^n\rV^{1/n} = 1$. Thus,
by Proposition \ref{Honary}, $\Aone(X)$ is natural if and only if its uniform  closure is natural. Clearly \jchange this is the case whenever $R(X)=A(X)$.

\section{$\paths$-derivatives}
\noindent
We noted earlier that, whenever $X$ is a compact plane set with dense interior, $\wdiff{X}$ is the $\norm$-closure of $\DDX$ in $\Aone(X)$. The condition that $\Tint X$ be dense in $X$ is too restrictive for our purposes, and so we now introduce a new class of compact plane sets.

\begin{definition}
Let $X$ be a compact plane set.
Then $X$ is \textit{semi-rectifiable} if the union of all the rectifiable Jordan arcs
in $X$ is dense in $X$.
\end{definition}
Clearly every semi-rectifiable, compact plane set is perfect.

Although an admissible arc need not be a Jordan arc, it is never\-theless easy to see that a compact plane set is semi-rectifiable if and only if the union of all the admissible arcs
in $X$ is dense in $X$. In view of our earlier comments, this condition is also equivalent to the the condition that the union of all the rectifiable arcs in $X$ be dense in $X$.

We now define another new term, \lq effective'. This is a modification of the term \lq useful', which was introduced in \cite{Bland-Fein}.
In particular, we shall replace
the restriction that the (rectifiable) paths in the family be Jordan paths by the weaker condition that they be
admissible.
We discuss the implications of this below.
However, we also restrict attention to the case where the union of the images of the given (rectifiable) paths is dense in $X$;
this was not included in the definition of the term \lq useful' in \cite{Bland-Fein}. Note that this is only possible for semi-rectifiable, compact plane sets $X$.

\begin{definition}
Let $X$ be a compact plane set, and let $\paths$ be a family of paths in $X$.
Then $\paths$ is \textit{effective} if each path \jchange in $\paths$ is admissible, if each subpath of a path in $\paths$ belongs to $\paths$, and if
the union of the images of the paths in $\paths$ is dense in $X$.
\end{definition}

The next definition is as in \cite{Bland-Fein}.

\begin{definition}
Let $X$ be a compact plane set, let $\paths$ be a family of rectifiable
paths in $X$, and let $f,g \in C(X)$.
Then $g$ is an \textit{$\paths$-derivative}
of $f$
if, for all $\gamma \in \paths$,
we have \[\int_{\gamma} g = f(\gamma^+)-f(\gamma^-).\]
We define
\[ \BF{X} = \{ f \in C(X) : f \mbox{ has an $\paths$-derivative in
$C(X)$} \}.\]
\end{definition}

We are interested mostly in the case where $\paths$ is effective.
In this case it is easy to show that $\paths$-derivatives are unique.

We now revisit the theory of $\paths$-derivatives, as introduced in \cite{Bland-Fein}.
It was relatively
easy to prove the product rule for
$\paths$-derivatives  \cite[Theorem 4.9]{Bland-Fein}  when $\paths$ is useful by
using polynomial approximation on each of the Jordan arcs involved.
We shall show  in Theorem \ref{product-rule} below  that the product rule remains valid when $\paths$ is effective.

First we give a lemma concerning $\DD(X)$, based on
part of \cite[Theorem 4.17]{Bland-Fein}.

\begin{lemma}
\label{DDsubBF}
Let $X$ be a perfect, compact plane set, and suppose that $\paths$
is a set of
rectifiable paths in $X$.
\begin{enumerate}
\item[(i)]
Let $f \in \DD(X)$. Then the usual derivative $f'$ is also an $\paths$-derivative
of~$f$.
\smallskip
\item[(ii)]
Let $f_1, f_2 \in \DD(X)$. Then $f_1 f_2' + f_1' f_2$ is an $\paths$-derivative
of $f_1 f_2$.
\end{enumerate}
\end{lemma}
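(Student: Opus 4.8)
The plan is to reduce part~(ii) to part~(i) and to place all the real work in~(i). For~(ii), I would first check that $\DD(X)$ is closed under products and that the complex derivative obeys the usual product rule: for $f_1,f_2\in\DD(X)$ and $a\in X$, writing out the difference quotient of $f_1f_2$ at $a$ and applying the algebra of limits (legitimate because $X$ is perfect, so $a$ is a limit point of $X$ and the defining limit is meaningful) shows that $f_1f_2$ is differentiable at $a$ with $(f_1f_2)'(a)=f_1(a)f_2'(a)+f_1'(a)f_2(a)$. Since $f_1,f_2,f_1',f_2'$ are continuous, so is $f_1f_2'+f_1'f_2$; hence $f_1f_2\in\DD(X)$ and its usual derivative is exactly $f_1f_2'+f_1'f_2$. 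Part~(ii) is then immediate from part~(i) applied to the function $f_1f_2$.

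For part~(i), fix $f\in\DD(X)$ and $\gamma\in\paths$; since $\gamma$ is a rectifiable path in $X$, it suffices to prove $\int_\gamma f'=f(\gamma^+)-f(\gamma^-)$. First I would replace $\gamma$ by its arc-length parametrization $\gammatilde:[0,L]\to\C$, which has the same image and endpoints as $\gamma$ and is Lipschitzian with constant~$1$; as the Riemann--Stieltjes line integral is invariant under such a reparametrization, $\int_\gamma f'=\int_{\gammatilde}f'$. Because $\gammatilde$ is Lipschitzian, hence absolutely continuous, the line integral collapses to a Lebesgue integral,
\[
\int_{\gammatilde}f'=\int_0^L f'(\gammatilde(s))\,\gammatilde'(s)\dd s .
\]
Writing $h=f\circ\gammatilde$, the assertion becomes $h(L)-h(0)=\int_0^L f'(\gammatilde(s))\gammatilde'(s)\dd s$, that is, the fundamental theorem of calculus for~$h$.

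The main obstacle is that $h$ is built from a function that need not be Lipschitz on $X$ (Theorem~\ref{bad-arc} shows that $f$ itself may fail to lie in $\Lip X$), that $f$ is only differentiable \emph{pointwise} on the possibly very irregular set $X$, so uniform differentiability is unavailable, and that $h$ is a priori differentiable only almost everywhere (where $\gammatilde'$ exists); a continuous, a.e.-differentiable function need not obey the fundamental theorem (cf.\ the Cantor function). The point that circumvents all of this is a \emph{one-sided derivate bound valid at every point}: for any $s\in[0,L]$, putting $z=\gammatilde(s)$ and using differentiability of $f$ at the single point $z$ together with $|\gammatilde(t)-z|\le|t-s|$, one obtains
\[
\limsup_{t\to s}\frac{|h(t)-h(s)|}{|t-s|}\le |f'(z)|\le |f'|_{\Jim{\gammatilde}} .
\]
This uses only pointwise differentiability of $f$ and the Lipschitz constant of $\gammatilde$, with no uniformity. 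Applying the standard monotonicity lemma (a real continuous function whose upper right derivate is $\le M$ at every point has increment at most $M$ times the length over any subinterval) to the map $s\mapsto\Re(\e^{\i\theta}h(s))$, with $\theta$ chosen for each pair of endpoints so that the relevant increment of $h$ is real and nonnegative, shows that $h$ is Lipschitzian with constant $|f'|_{\Jim{\gammatilde}}$, and in particular absolutely continuous. It remains to identify $h'$: at almost every $s$, namely wherever $\gammatilde'(s)$ exists, a purely pointwise chain-rule computation---again controlling the remainder of $f$ at $\gammatilde(s)$ by $|\gammatilde(t)-\gammatilde(s)|\le|t-s|$---gives $h'(s)=f'(\gammatilde(s))\gammatilde'(s)$. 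Since $h$ is absolutely continuous, the fundamental theorem of calculus now yields $h(L)-h(0)=\int_0^L h'(s)\dd s=\int_0^L f'(\gammatilde(s))\gammatilde'(s)\dd s$, completing the proof of~(i).
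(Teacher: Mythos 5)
Your proof is correct, and the interesting divergence from the paper is in part (i). Part (ii) is handled identically in both: the pointwise product rule holds in $\DD(X)$ because $X$ is perfect, so (ii) is just (i) applied to $f_1f_2$. For part (i), however, the paper's proof is essentially a citation: it quotes the fundamental theorem of calculus for rectifiable paths, \cite[Theorem 3.3]{Bland-Fein}, which is proved in that reference by a repeated-bisection argument within Riemann--Stieltjes theory (the same device redeployed here in the proof of Theorem \ref{product-rule}), an argument that leans on continuity of $f'$ at the point where the nested subpaths accumulate. You instead prove that theorem from scratch: arc-length reparametrization, an \emph{everywhere} (not merely almost-everywhere) Dini-derivate bound on $h=f\circ\gammatilde$, the monotonicity lemma to conclude that $h$ is Lipschitz and hence absolutely continuous, a pointwise chain rule at the parameters where $\gammatilde'$ exists, and the Lebesgue form of the fundamental theorem of calculus. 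Your estimates are sound: the remainder coming from differentiability of $f$ at $\gammatilde(s)$ is correctly absorbed using $|\gammatilde(t)-\gammatilde(s)|\leq|t-s|$, and your insistence that the derivate bound hold at \emph{every} parameter value is exactly what rules out Cantor-function pathology. As for what each approach buys: the paper's route is two lines long and stays entirely inside Riemann--Stieltjes theory; your route is self-contained, and its core uses only pointwise differentiability of $f$ together with boundedness of $f'$ on $\Jim{\gamma}$ (continuity of $f'$ enters only to make the line integral meaningful and to identify it with a Lebesgue integral), but it imports measure-theoretic machinery (a.e.\ differentiability of Lipschitz functions, absolute continuity) and takes on faith two standard facts: invariance of the Riemann--Stieltjes integral under the non-injective reparametrization $\gamma=\gammatilde\circ\phi$, where $\phi$ is the continuous, non-decreasing (but not strictly increasing) arc-length function, and the equality of the integral along a Lipschitz path with the corresponding Lebesgue integral. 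Both facts are true and standard, so these are presentational debts rather than gaps; note finally that $\paths$ may contain constant paths (only admissibility excludes constant subpaths), for which your construction degenerates to $L=0$, but in that case both sides of the identity vanish trivially.
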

\begin{proof}
Since the product rule is valid for $\stddiff{X}$,
the result follows from the fund\-amental theorem of calculus for
rectifiable paths \cite[Theorem 3.3]{Bland-Fein}. \hfill
\end{proof}
\sskip
Note that the above  lemma applies, in particular, to functions in $R_0(X)$.
Using this, together with rational approximation and the method of repeated bisection,
we can now prove our new version of the product rule for $\paths$-derivatives.\smallskip

\begin{theorem}
\label{product-rule}
Let $X$ be a \jchange semi-rectifiable, compact plane set, and suppose that $\paths$ is an effective family of paths in $X$.
Let $f_1, f_2 \in \BF{X}$ have $\paths$-derivatives $g_1$, $g_2$, respectively.
Then $f_1 g_2 + g_1 f_2$ is an $\paths$-derivative of $f_1 f_2$.
\end{theorem}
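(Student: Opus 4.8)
The plan is to fix an arbitrary path $\gamma \in \paths$ and verify the defining identity
\[
\int_\gamma (f_1 g_2 + g_1 f_2) = (f_1 f_2)(\gamma^+) - (f_1 f_2)(\gamma^-)
\]
directly, by approximating the left-hand integral with Riemann--Stieltjes-type sums adapted to the $\paths$-derivatives $g_1$ and $g_2$. Since $\paths$ is effective, $\gamma$ is admissible, hence rectifiable, and every subpath of $\gamma$ again lies in $\paths$; this last fact is exactly what permits the defining identities for $g_1$ and $g_2$ to be applied piece by piece.

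First I would set up the repeated bisection. Partitioning the parameter interval of $\gamma$ at points $a = t_0 < t_1 < \cdots < t_N = b$ (obtained, for instance, by successive bisection) gives consecutive subpaths $\sigma_1, \ldots, \sigma_N$ with $\sigma_i \in \paths$, endpoints $\sigma_i^- = \gamma(t_{i-1})$ and $\sigma_i^+ = \gamma(t_i)$, and $\sum_i |\sigma_i| = |\gamma|$. Because the arc-length function of a continuous rectifiable path is itself continuous, refining the partition forces $\eta := \max_i |\sigma_i| \to 0$, and since the diameter of $\Jim{\sigma_i}$ is at most $|\sigma_i|$, the images shrink as well. On each piece the definition of $\paths$-derivative yields $\int_{\sigma_i} g_1 = f_1(\sigma_i^+) - f_1(\sigma_i^-)$ and $\int_{\sigma_i} g_2 = f_2(\sigma_i^+) - f_2(\sigma_i^-)$.

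The heart of the argument is the elementary estimate $\left|\int_\sigma h\right| \leq |h|_{\Jim{\sigma}}\,|\sigma|$ for the path integral, combined with the uniform continuity of $f_1, f_2$ on $X$. Writing $\omega_j$ for a modulus of continuity of $f_j$, I would bound
\[
\Bigl| \int_\gamma f_1 g_2 - \sum_{i} f_1(\sigma_i^+)\!\int_{\sigma_i} g_2 \Bigr| = \Bigl| \sum_i \int_{\sigma_i}(f_1 - f_1(\sigma_i^+)) g_2 \Bigr| \leq \omega_1(\eta)\,|g_2|_X\,|\gamma|,
\]
and symmetrically for $\int_\gamma g_1 f_2$ using the base points $f_2(\sigma_i^-)$. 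Letting the mesh tend to $0$, these show that $\int_\gamma f_1 g_2 = \lim \sum_i f_1(\sigma_i^+)(f_2(\sigma_i^+) - f_2(\sigma_i^-))$ and $\int_\gamma g_1 f_2 = \lim \sum_i f_2(\sigma_i^-)(f_1(\sigma_i^+) - f_1(\sigma_i^-))$. Adding the two families of sums, the $i$-th summand collapses (the cross terms cancelling) to $f_1(\sigma_i^+)f_2(\sigma_i^+) - f_1(\sigma_i^-)f_2(\sigma_i^-)$, whose sum over $i$ telescopes to $(f_1 f_2)(\gamma^+) - (f_1 f_2)(\gamma^-)$, a value independent of the partition. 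Since the combined sums converge to $\int_\gamma(f_1 g_2 + g_1 f_2)$ yet equal this fixed value for every partition, the required identity follows.

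I expect the main obstacle to be conceptual rather than computational. In the earlier \lq useful' setting of \cite{Bland-Fein} one could approximate by polynomials on each Jordan arc, but the image of a merely admissible path may be self-intersecting and need not be polynomially convex, so that route is unavailable. The bisection device is precisely what sidesteps this: it replaces global approximation on the arc by purely local control through the modulus of continuity. The only points needing genuine care are the continuity of the arc-length function (to guarantee that both lengths and image diameters shrink uniformly) and the bookkeeping that keeps every subpath inside $\paths$, so that the defining property of $g_1$ and $g_2$ stays applicable on each piece. As an alternative entry point one could instead first invoke Lemma \ref{DDsubBF} to obtain the rule for $R_0(X)$ and then pass to the general case by rational approximation of the integrands on each short subpath; but the direct estimate above seems cleaner and essentially self-contained.
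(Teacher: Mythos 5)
Your proof is correct, but it takes a genuinely different route from the paper's. The paper argues by contradiction: it reduces to the case $X=\Jim{\gamma}$ with $\paths$ consisting of $\gamma$ and its subpaths, invokes the Hartogs--Rosenthal theorem (the image of a rectifiable path has zero area, so $R(X)=C(X)$) to approximate $g_1$ and $g_2$ uniformly by rational functions, runs a repeated-bisection argument to produce nested subpaths $\gamma_n$ with $\lv\gamma_n\rv=2^{-n}$ on which the defect stays at least $C/2^n$, and then, near the point common to all the $\Jim{\gamma_n}$, takes local analytic antiderivatives of the rational approximants and applies the product rule already known for $\DD$ functions (Lemma \ref{DDsubBF}) to reach a contradiction. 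You instead verify the identity directly by an Abel-summation (discrete Leibniz) device: partition $\gamma$ into subpaths $\sigma_i\in\paths$ (closure under subpaths is exactly what effectiveness supplies), apply the defining identity of $g_1,g_2$ on each piece, note that with the evaluation points chosen as $f_1(\sigma_i^+)$ and $f_2(\sigma_i^-)$ the cross terms cancel \emph{exactly}, so the combined sums telescope to $(f_1f_2)(\gamma^+)-(f_1f_2)(\gamma^-)$ for every partition, and control the error by uniform continuity of $f_1,f_2$ together with the estimate $\lv\int_\sigma h\rv\leq \lv h\rv_{\Jim{\sigma}}\lv\sigma\rv$ and the continuity of the arc-length function (which makes the mesh of lengths, hence of image diameters, tend to zero). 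This mirrors the classical proof of integration by parts for Riemann--Stieltjes integrals. What your approach buys is a shorter, self-contained, purely real-variable proof: no rational approximation, no complex analysis, and in fact no use of the density clause in the definition of effectiveness---only rectifiability and closure under subpaths are needed. What the paper's route buys is the reuse of machinery (rational approximation, Lemma \ref{DDsubBF}, the bisection technique) that it develops and deploys repeatedly elsewhere in the article.
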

\begin{proof}
Set $h=f_1 g_2 + g_1 f_2$ and $H=f_1 f_2$.
Assume, for contradiction, that the result is false.
Then there is a path $\gamma \in \paths$ such that
\[
\int_{\gamma} h \neq H(\gamma^+)-H(\gamma^-)\,.
\]
It is clear that we may suppose that $X=\Jim{\gamma}$ and that
$\paths$ consists of $\gamma$ and all its subpaths.
Since $\gamma$ is rectifiable, the area of $X$ is $0$.
By the Hartogs--Rosenthal theorem \cite[Corollary II.8.4]{Gamelin}, $R(X)=C(X)$.
Moreover, by various types of scaling, we may suppose that $|\gamma|= 1$
and that each of the four functions $f_1$, $g_1$, $f_2$, and $g_2$ have uniform norm at most $1/16$.
Set
\[
C=\left|\int_\gamma h - \left( H(\gamma^+) - H(\gamma^-) \right)\right | > 0\,,
\]
and then set $\varepsilon=\min\{C/2,1/16\}$.
For $j \in \{1,2\}$, choose a rational function $r_j$ with poles off $X$ such that
$|r_j-g_j|_X < \varepsilon$.

By an obvious repeated bisection method, we find nested decreasing subpaths $\gamma_n$ of $\gamma$ with
$|\gamma_n|=2^{-n}$ and such that
\[
\left|\int_{\gamma_n} h - \left ( H(\gamma_n^+) - H(\gamma_n^-) \right)\right | \geq \frac{C}{2^n}\quad (n \in \N)\,.
\]
There exists a (unique) point $a$ in $\bigcap_{n \in \N} \Jim{\gamma_n}$.
Choose an open disk $D$ centred on $a$ on which both $r_1$ and $r_2$ are analytic.
For each $j = 1,2$, choose an analytic anti-derivative $R_j$ of $r_j$ on $D$ such that $R_j(a)=f_j(a)$, and then
choose $n\in\N$ large enough that $\Jim{\gamma_n}$ is contained in $D$. Then it follows easily that
\[
|R_j-f_j|_{\Jim{\gamma_n}} < \frac{\varepsilon}{2^n}\quad (j = 1,2)\,.
\]
Set $r=R_1 r_2 + r_1 R_2$ and $R=R_1 R_2$.
By the preceding lemma,
\[
\int_{\gamma_n} r = R(\gamma_n^+) - R(\gamma_n^-)\,.
\]
However, easy calculations show that $|R-H|_{\Jim{\gamma_n}} < \varepsilon/2^{n+2}$ and $|r-h|_{\Jim{\gamma_n}} < \varepsilon/2$, from which we see that
\[
\left | \int_{\gamma_n} h -\int_{\gamma_n} r \right | < \frac{\eps}{2^{n+1}}
\]
and
\[
\left | ( H(\gamma_n^+) - H(\gamma_n^-) - ( R(\gamma_n^+) - R(\gamma_n^-))\right | < \frac{\eps}{2^{n+1}}\,.
\]
This quickly leads to a contradiction of the choice of $\gamma_n$.

Hence the result follows.
\end{proof}

\smallskip

Let $X$ be a semi-rectifiable, compact plane set, and suppose that $\paths$ is an effective family of paths in $X$. As observed above,
\jchange $\paths$-derivatives are unique in this setting,
and so we may denote the
$\paths$-derivative of a function $f \in \BF{X}$ by $f'$.
For  $f \in \BF{X}$, set
\[
\|f\|=|f|_X + |f'|_X\,.
\]

\begin{theorem}
\label{BF-complete}
Let $X$ be a semi-rectifiable, \jchange compact plane set, and suppose that $\paths$ is an effective family of paths in $X$.
Then $(\BF{X},\Jnorm)$ is a Banach function algebra on $X$ containing $\DD(X)$ as a subalgebra.
\end{theorem}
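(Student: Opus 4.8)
The plan is to verify in turn the three assertions bundled into the statement: that $\BF{X}$ is a subalgebra of $C(X)$ containing $\DD(X)$, that $\norm$ makes it a normed function algebra on $X$, and that this normed algebra is complete. The algebraic and normed-algebra parts are quick consequences of the machinery already in place, while completeness is where the real (though still routine) work lies.

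First I would record the inclusion $\DD(X) \subseteq \BF{X}$. By Lemma \ref{DDsubBF}(i), every $f \in \DD(X)$ has its usual derivative $f'$ as an $\paths$-derivative, and since $\paths$-derivatives are unique in this effective setting the two meanings of $f'$ agree; hence $\norm$ restricts on $\DD(X)$ to the original norm, so $\DD(X)$ will be a normed subalgebra once $\BF{X}$ is shown to be one. As $\DD(X)$ already contains the constants and the function $Z$ (because $R_0(X) \subseteq \DD(X)$), and $Z$ separates the points of the perfect set $X$, the same holds for $\BF{X}$. Next I would check the closure properties. Linearity of the Riemann--Stieltjes integral $\int_\gamma$ shows immediately that $\lambda g_1 + \mu g_2$ is the $\paths$-derivative of $\lambda f_1 + \mu f_2$, so $\BF{X}$ is a linear subspace with $(\lambda f_1 + \mu f_2)' = \lambda f_1' + \mu f_2'$, the constants having derivative $0$. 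Closure under multiplication is precisely Theorem \ref{product-rule}, which gives $f_1 f_2 \in \BF{X}$ with $(f_1 f_2)' = f_1 f_2' + f_1' f_2$. From this product formula the estimate
\[
\|f_1 f_2\| = |f_1 f_2|_X + |f_1 f_2' + f_1' f_2|_X \leq (|f_1|_X + |f_1'|_X)(|f_2|_X + |f_2'|_X) = \|f_1\|\,\|f_2\|
\]
shows that $\norm$ is submultiplicative; the remaining norm axioms and the inequality $\|f\| \geq |f|_X$ are immediate, so $(\BF{X},\norm)$ is a normed function algebra on $X$.

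The substantive point is completeness, and this is the step I expect to require the most care. Given a $\norm$-Cauchy sequence $(f_n)$ in $\BF{X}$, both $(f_n)$ and $(f_n')$ are uniformly Cauchy, so by completeness of $(C(X), |\cdot|_X)$ there are $f, g \in C(X)$ with $f_n \to f$ and $f_n' \to g$ uniformly on $X$. It then suffices to show $f \in \BF{X}$ with $f' = g$, for then $\|f_n - f\| = |f_n - f|_X + |f_n' - g|_X \to 0$. Fixing $\gamma \in \paths$ and using that $\gamma$ is admissible, hence rectifiable, the bound $|\int_\gamma h| \leq |\gamma|\,|h|_{\Jim{\gamma}}$ makes $\int_\gamma$ continuous for uniform convergence on $\Jim{\gamma}$; combining this with the fact that $f_n'$ is the $\paths$-derivative of $f_n$ yields
\[
\int_\gamma g = \lim_{n\to\infty} \int_\gamma f_n' = \lim_{n\to\infty}\bigl(f_n(\gamma^+) - f_n(\gamma^-)\bigr) = f(\gamma^+) - f(\gamma^-)\,,
\]
the final equality by pointwise convergence. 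Thus $g$ is an $\paths$-derivative of $f$, so $f \in \BF{X}$ and $f' = g$.

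The one delicate ingredient is the interchange of limit and integral in this last display, and it rests solely on the rectifiability of the paths in $\paths$. The semi-rectifiability and density hypotheses do not reappear in the limiting argument itself; rather they are needed beforehand, to guarantee (via the density of the union of the images, which is part of effectiveness) that $\paths$-derivatives are unique and hence that $\norm$ is well defined, and to supply the product rule of Theorem \ref{product-rule}.
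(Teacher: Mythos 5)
Your proposal is correct and follows essentially the same route as the paper: the product rule (Theorem \ref{product-rule}) yields the algebra structure and submultiplicativity, a Cauchy-sequence argument with the interchange of limit and path integral gives completeness, and Lemma \ref{DDsubBF}(i) supplies the inclusion $\DD(X) \subseteq \BF{X}$. The extra details you supply (separation of points, the estimate $\lv \int_\gamma h \rv \leq \lv\gamma\rv\,\lv h\rv_{\Jim{\gamma}}$ justifying the limit interchange, and the role of uniqueness of $\paths$-derivatives) are all correct and merely make explicit what the paper leaves implicit.
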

\begin{proof}
Let $f_1,f_2 \in \BF{X}$. By Theorem \ref{product-rule}, we have $f_1 f_2 \in \BF{X}$ and
$(f_1 f_2)' = f_1 f_2' + f_1' f_2$. It follows immediately that
$\|f_1 f_2\| \leq \|f_1\| \|f_2\|$, and so $(\BF{X},\Jnorm)$ is a normed algebra.

Let $(f_n)$ be a Cauchy sequence in $(\BF{X},\Jnorm)$. Then $(f_n)$ and $(f_n')$ are Cauchy sequences in $(C(X),|\cdot|_X)$, and so they converge uniformly on $X$, say $f_n \to f$ and $f_n' \to g$ as $n \to \infty$.

For each $\gamma \in \paths$, we have
\[
\int_\gamma g = \lim_{n \to \infty} \int_\gamma f_n' =
\lim_n(f_n(\gamma^+)-f_n(\gamma^-)) = f(\gamma^+) - f(\gamma^-)\,,
\]
and so $f \in \BF{X}$ with $f'=g$. Thus $(\BF{X},\Jnorm)$ is complete.

Finally, it follows from Lemma \ref{DDsubBF}(i) that $\DD(X) \subseteq \BF{X}$.
\end{proof}
\sskip
Note that the inclusion map from $\DD(X)$ to $\BF{X}$ is obviously isometric here.

Easy examples,
 such as \cite[Example 5.2]{Bland-Fein},
 show that  $\BF{X}$ need not be contained in $A(X)$ in this setting.

We do not know whether or not $\BF{X}$ is natural whenever $\paths$ is effective.
However, as for $\Aone(X)$ above, $\BF{X}$ is natural on $X$ if and only if the uniform closure of $\BF{X}$ is natural on $X$.

\section{The completion of $\DDXN$}
\noindent
In this section we shall discuss the completion, $\wdiff{X}$, of $\DDXN$.

We first give an example to show that $\wdiff{X}$ need not be semisimple.

 Recall that, for each  compact  space $X$, the semi-direct product $C(X) \ltimes C(X)$ is a Banach algebra for the product given by
\[
(f_1,f_2)(g_1,g_2 ) = (f_1g_1, f_1g_2 + f_2g_1)\quad (f_1,f_2,g_1,g_2 \in C(X))
\]
 and the norm given by
 \[
 \lV (f,g)\rV = \lv f \rv_X + \lv g \rv_X \;\,(f,g \in C(X))\,.
 \]
 The radical of this algebra is $\{0\} \ltimes C(X)$, a nilpotent ideal of index $2$. In particular, $C(X) \ltimes C(X)$ is not semisimple.
\smallskip

\begin{proposition}
Let $X$ be a perfect, compact plane set. Then the map
\[
\iota:f \mapsto (f,f')\,,\quad  \DD(X)\to  C(X) \ltimes C(X)\,,
\]
is an isometric algebra embedding, and $\wdiff{X}$
may be identified with the closure in $C(X) \ltimes C(X)$ of $\iota(\DD(X))$.
\end{proposition}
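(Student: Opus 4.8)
The plan is to verify the two assertions in turn: first that $\iota$ is an isometric algebra embedding, and then that the closure of its image is a copy of the completion $\wdiff{X}$.

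First I would check that $\iota$ is an algebra homomorphism. Linearity is immediate, and for the multiplicative property the key point is that the product rule $(fg)'=f'g+fg'$ for $\DD(X)$ matches exactly the multiplication in the semidirect product. Indeed, for $f,g \in \DD(X)$,
\[
\iota(fg) = (fg,\,f'g+fg')\,,
\]
whereas the product in $C(X) \ltimes C(X)$ gives
\[
\iota(f)\iota(g) = (f,f')(g,g') = (fg,\,fg'+f'g)\,,
\]
and these agree. To see that $\iota$ is isometric, I would simply compare the norms: by the definition of the norm on $C(X) \ltimes C(X)$,
\[
\|\iota(f)\| = \|(f,f')\| = |f|_X + |f'|_X = \|f\|\,,
\]
which is precisely the norm of $f$ in $\DDXN$. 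In particular $\iota$ is injective, so it is an isometric algebra embedding.

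For the second assertion I would appeal to the essential uniqueness of the completion of a normed algebra. Since $C(X) \ltimes C(X)$ is a Banach algebra (as recalled just above the statement), the closure $B$ of $\iota(\DD(X))$ is a closed subalgebra of a complete normed algebra and hence is itself a Banach algebra. By the first part, $\iota$ is an isometric algebra isomorphism of $\DD(X)$ onto the subalgebra $\iota(\DD(X))$, which is dense in $B$. Thus $B$ is a Banach algebra containing an isometric, dense algebra copy of $\DDXN$, and so the embedding $\iota$ extends uniquely to an isometric algebra isomorphism of $\wdiff{X}$ onto $B$. This is the required identification.

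There is no substantial obstacle here. The only points that genuinely need care are that $\iota$ respects multiplication — which is exactly the observation that the product rule coincides with the semidirect-product multiplication — and that the target is complete, so that the closure of the image really does furnish a completion of $\DDXN$; everything else is a routine application of the uniqueness of completions.
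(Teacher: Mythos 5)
Your proposal is correct and matches the paper's (implicit) argument: the paper simply declares the result immediate, citing Dales's thesis, and your write-up spells out exactly the routine verification intended --- the product rule matching the semidirect-product multiplication, the norm identity, and uniqueness of completions applied to the closed subalgebra of the Banach algebra $C(X) \ltimes C(X)$.
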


\begin{proof}
This is immediate; it was noted in \cite[pp. 53--54]{Dales-Thesis}. \end{proof}
\smallskip

\begin{example} \cite[Example 3.1.10(ii)]{Dales-Thesis}
In \cite{Bishop}, Bishop gave an example of a Jordan arc $J$ in the plane
with the property that the image under the above embedding
$\iota$ of the set of polynomial functions
is dense in  $C(J) \ltimes C(J)$.
It follows immediately from this that $\wdiff{J}$ is equal to
$C(J) \ltimes C(J)$. As observed above, this algebra is not semisimple.
\QED
\end{example}

There are no non-constant, rectifiable paths in Bishop's arc $J$. This is not too surprising
in view of our next result.\smallskip

\begin{theorem}
\label{semisimple-completion}
Let $X$ be a semi-rectifiable, compact plane set. Then $\wdiff{X}$ is semisimple.
\end{theorem}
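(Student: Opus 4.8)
The plan is to realize the completion $\wdiff{X}$ as a Banach function algebra \emph{on $X$} and then to invoke the fact that every Banach function algebra is semisimple. The hypothesis of semi-rectifiability enters precisely to supply an effective family of paths, which is exactly what is needed to recover the derivative part of a limit from the function itself.

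First I would fix an effective family. Since $X$ is semi-rectifiable, the union of all admissible arcs in $X$ is dense in $X$, so I take $\paths$ to be the set of all admissible paths in $X$. This family is effective: every member is admissible by construction; every subpath of an admissible path is again rectifiable with no constant subpaths, hence admissible, and so lies in $\paths$; and the union of the images is dense in $X$. By Theorem \ref{BF-complete}, $(\BF{X},\norm)$ is then a Banach function algebra on $X$ into which $\DD(X)$ embeds isometrically, the isometry being the statement of Lemma \ref{DDsubBF}(i) that $f'$ is the $\paths$-derivative of $f$ for each $f \in \DD(X)$.

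Next I would identify $\wdiff{X}$ with the $\norm$-closure of $\DD(X)$ inside $\BF{X}$. Since $\BF{X}$ is complete and carries an isometric copy of $\DD(X)$, this closure is a complete normed algebra in which $\DD(X)$ is dense; by uniqueness of the completion it is $\wdiff{X}$. In particular $\wdiff{X}$ is a closed subalgebra of $\BF{X} \subseteq C(X)$; it contains the constants, separates the points of $X$, and its norm dominates $\Unorm_X$. Hence $\wdiff{X}$ is itself a Banach function algebra on $X$. Semisimplicity is then automatic: for any Banach function algebra $A$ on $X$, each evaluation functional $\eps_x$ $(x \in X)$ is a character, so the Jacobson radical, being the intersection of the kernels of all characters, satisfies $\operatorname{rad}(A) \subseteq \bigcap_{x \in X}\ker\eps_x = \{f \in A : f(x)=0 \text{ for all } x \in X\} = \{0\}$. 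Taking $A = \wdiff{X}$ gives the result.

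The one point deserving care, and the reason semi-rectifiability cannot be dropped, is the identification of $\wdiff{X}$ with a subalgebra of $C(X)$. Via $f \mapsto (f,f')$ the completion sits inside $C(X) \ltimes C(X)$, and a general element of the closure is a pair $(f,g)$ coming from $f_n \to f$ and $f_n' \to g$ uniformly. It is exactly the relation $\int_\gamma g = f(\gamma^+) - f(\gamma^-)$ for $\gamma \in \paths$, together with the uniqueness of $\paths$-derivatives for effective $\paths$, that forces $g$ to be determined by $f$ and so collapses such pairs onto honest continuous functions on $X$. When $X$ has no non-constant rectifiable paths, as in Bishop's arc, no effective family exists, this step fails, and the nilpotent radical $\{0\} \ltimes C(J)$ survives; thus the semi-rectifiable hypothesis is precisely what rescues semisimplicity.
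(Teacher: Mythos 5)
Your proposal is correct and follows essentially the same route as the paper: take $\paths$ to be the family of all admissible paths in $X$ (effective precisely because $X$ is semi-rectifiable), invoke Theorem \ref{BF-complete} to realize $\BF{X}$ as a Banach function algebra on $X$, identify $\wdiff{X}$ with the closure of $\DD(X)$ therein, and conclude semisimplicity since a Banach function algebra has trivial radical. The paper compresses these steps into three lines, so your write-up is simply a more detailed rendering of the same argument.
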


\begin{proof}
Let $\paths$ be the family of all admissible paths in $X$. Then $\paths$ is effective.
By Theorem \ref{BF-complete}, $\BF{X}$ is a Banach function algebra, and we can regard $\wdiff{X}$
as the closure of $\DD(X)$ in $\BF{X}$.
From this, it is immediate that $\wdiff{X}$ is semisimple.
\end{proof}
\sskip

We do not know whether or not $\wdiff{X}$ is always equal to $\BF{X}$
under the conditions of Theorem \ref{semisimple-completion}, with $\paths$ equal to the family of all admissible paths in $X$.
Nor do we know, in this setting, whether or not the completeness of $\DDXN$ implies that $\DD(X)=\BF{X}$.
However, if we assume merely that $\paths$ is effective, then \cite[Example 5.2]{Bland-Fein} shows that it is possible for $\DD(X)$ to be complete without having $\DD(X)=\BF{X}$.
Nevertheless, in the next section we shall see that it can be useful to work with effective families $\paths$ which are somewhat smaller than the family of
all admissible paths in $X$.

\section{$\paths$-regularity}
\noindent
Several results concerning the relationship between $\DD(X)$ and $\BF{X}$ were obtained in \cite{Bland-Fein} under a variety of conditions on $\paths$.
We wish to simplify the conditions considered, while at the same time strengthening these results.
To do this we introduce the notion of $\paths$-regularity.

\begin{definition}
Let $X$ be a \jchange semi-rectifiable, compact plane set, and suppose that $\paths$ is an effective family of paths in $X$.
Then $X$ is \textit{$\paths$-regular at} a
point $z \in X$ if there is a constant $k_{z} > 0$ such that,
for every $w \in X$, there is a path $\gamma \in \paths$
joining $z$ to $w$ with $\Length{\gamma} \leq k_{z}|z-w|$.

The compact plane set $X$ is \textit{$\paths$-regular} if
$X$ is $\paths$-regular at every point of $X$.
\end{definition}

It is clear that $\paths$-regularity \jchange implies pointwise regularity.
The main gain in the following result over \cite[Theorem 5.1]{Bland-Fein} is  that the earlier requirement that
$\paths$ should `include all short paths' may now be removed.
\smallskip

\begin{theorem}
\label{F-reg-equality}
Let $X$ be a semi-rectifiable, \jchange compact plane set, and suppose that $\paths$ is an effective family of paths in $X$ such that
$X$ is $\paths$-regular. Then
\[\stddiff{X} = \BF{X}\,.\]
\end{theorem}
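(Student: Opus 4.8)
The plan is to prove the two inclusions separately. The inclusion $\DD(X) \subseteq \BF{X}$ is already in hand: by Lemma~\ref{DDsubBF}(i), every $f \in \DD(X)$ has its ordinary derivative $f'$ as an $\paths$-derivative, so $f \in \BF{X}$. Hence the entire content of the theorem is the reverse inclusion $\BF{X} \subseteq \DD(X)$, and this is where I would use $\paths$-regularity. I would take $f \in \BF{X}$ with $\paths$-derivative $g \in C(X)$ and aim to show that $f$ is differentiable at each $z \in X$ with derivative $g(z)$; since $g$ is continuous, this yields $f \in \DD(X)$ with $f' = g$ at once.

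Fix $z \in X$, and let $k_z > 0$ be the constant supplied by $\paths$-regularity at $z$. For $w \in X$ with $w \neq z$, I would choose $\gamma \in \paths$ joining $z$ to $w$ with $|\gamma| \leq k_z |z - w|$, oriented so that $\gamma^- = z$ and $\gamma^+ = w$. Applying the defining property of the $\paths$-derivative to $f$, and noting that the constant function $g(z)$ integrates to $g(z)(\gamma^+ - \gamma^-)$ along $\gamma$ (which follows from Lemma~\ref{DDsubBF}(i) applied to $Z \in \DD(X)$, whose $\paths$-derivative is the constant $1$), I would obtain the identity
\[
f(w) - f(z) - g(z)(w - z) = \int_\gamma (g - g(z)\cdot 1)\,.
\]
The standard length estimate for integrals over rectifiable paths then gives
\[
\left| f(w) - f(z) - g(z)(w - z) \right| \leq |\gamma| \sup_{\zeta \in \Jim{\gamma}} |g(\zeta) - g(z)| \leq k_z |z - w| \sup_{\zeta \in \Jim{\gamma}} |g(\zeta) - g(z)|\,.
\]

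The key geometric observation, and the one step that really uses the regularity bound, is that since $\gamma$ begins at $z$ and has length at most $k_z|z-w|$, its whole image $\Jim{\gamma}$ lies within Euclidean distance $k_z|z-w|$ of $z$. Thus, given $\varepsilon > 0$, continuity of $g$ at $z$ yields $\delta > 0$ with $|g(\zeta) - g(z)| < \varepsilon$ whenever $\zeta \in X$ and $|\zeta - z| < \delta$; and as soon as $|z - w|$ is small enough that $k_z |z - w| < \delta$, the supremum above is at most $\varepsilon$. Dividing the displayed inequality by $|w - z|$ shows that the difference quotient of $f$ at $z$ differs from $g(z)$ by at most $k_z \varepsilon$ for all such $w$, and letting $\varepsilon \to 0$ proves that $f$ is differentiable at $z$ with $f'(z) = g(z)$. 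Since $z \in X$ is arbitrary and $g$ is continuous, this gives $f \in \DD(X)$ with $f' = g$, hence $\BF{X} \subseteq \DD(X)$ and so $\DD(X) = \BF{X}$. I expect no serious obstacle here: the argument is a direct estimate, and the only point needing care is this observation that $\paths$-regularity confines the connecting path to a shrinking neighbourhood of $z$, so that the length factor and the oscillation of $g$ can be made small simultaneously. The orientation of $\gamma$ is immaterial, since reversing it changes the sign of both sides of the displayed identity in the same way.
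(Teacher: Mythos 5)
Your proof is correct and is essentially the same argument as the paper's: the paper proves Theorem \ref{F-reg-equality} by citing \cite[Theorem 5.1]{Bland-Fein}, and the proof there is precisely this estimate --- connect $z$ to $w$ by a path $\gamma \in \paths$ with $|\gamma| \leq k_z|z-w|$, write $f(w)-f(z)-g(z)(w-z) = \int_\gamma (g - g(z))$, and combine the length bound with the continuity of $g$ at $z$, using the fact that such a path cannot leave the disc of radius $k_z|z-w|$ about $z$. The only difference is that your write-up is self-contained, whereas the paper defers to the earlier reference.
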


\begin{proof}
The proof is essentially the same as that of \cite[Theorem 5.1]{Bland-Fein}, with some very minor modifications.
\end{proof}

\begin{remark}
Since $\paths$-regularity implies pointwise regularity, we already know under
these conditions that
$\DDXN$ is complete. The content of Theorem \ref{F-reg-equality} is that we actually have equality
between $\stddiff{X}$ and $\BF{X}$.
\end{remark}

We now wish to investigate the connections between $\stddiff{X}$, $\Aone(X)$ (as discussed earlier),
and $\BF{X}$
in the case where $\Tint X$ is dense in $X$ and $\paths$ is an appropriate effective family of paths in $X$.
First we look more closely at $\paths$-differentiation and subpaths of a path.

\begin{lemma}
\label{maximal-intervals}
Let $\Gamma:[a,b]\to\C$ be a rectifiable path, and let
$f,g\in C(\Gamma)$. Let $\sI$ be the set of all non-degenerate closed subintervals $J$ of $[a,b]$
such that, for every subpath $\gamma$ of $\Gamma|_J$, we have
\[\int_\gamma g = f(\gamma^+) - f(\gamma^-)\,.\]
Set $E=\bigcup \sI$ {\rm (}the union of all the intervals in $\sI${\rm )}.
Then $\sI$ and $E$ have the following properties.
\begin{enumerate}
\item[(i)]
Let $I$ and $J$ be in $\sI$ with $I \cap J \neq \emptyset$. Then $I \cup J \in \sI$.
\smallskip
\item[(ii)]
Every $J \in \sI$ is contained in a unique maximal element of $\sI$ {\rm (}with respect to set inclusion{\rm )}.
\smallskip
\item[(iii)]
The maximal elements of $\sI$ partition
$E$.
\smallskip
\item[(iv)]
Taking interior relative to $[a,b]$, the subset
$[a,b]\setminus \Tint E$ has no isolated points, and hence is perfect whenever $E \neq [a,b]$.
\smallskip
\item[(v)]
Either $E = [a,b]$ or $[a,b]\setminus E$ is uncountable.
\end{enumerate}
\end{lemma}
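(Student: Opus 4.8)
The plan is to reduce the whole lemma to a statement about the level sets of a single continuous function. Set $F = f \circ \Gamma$ and, for $t \in [a,b]$, let $H(t) = \int_{\Gamma|_{[a,t]}} g$. Since $\Gamma$ is rectifiable and $g$ is continuous, $H$ is well defined, and it is continuous because $|H(t)-H(s)|$ is bounded by $|g|_{\Jim{\Gamma}}$ times the length of $\Gamma|_{[s,t]}$, which tends to $0$ as $t \to s$. By additivity of the path integral over concatenations, $\int_{\Gamma|_{[u,v]}} g = H(v)-H(u)$ for $a \le u \le v \le b$. Writing $\psi = H - F \in C([a,b])$, the defining condition for a non-degenerate closed interval $J$ to lie in $\sI$ becomes simply that $\psi(v)-\psi(u)=0$ for all $u,v \in J$; that is,
\[
J \in \sI \iff \psi \text{ is constant on } J.
\]
This reformulation is the engine for everything that follows.

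Given it, parts (i)--(iii) are routine. For (i), if $\psi$ is constant on $I$ and on $J$ and $I \cap J \neq \emptyset$, a common point forces the two constants to agree, so $\psi$ is constant on the interval $I \cup J$. For (ii), given $J \in \sI$ with $\psi \equiv c$ on $J$, the maximal element containing $J$ is the maximal subinterval of the closed level set $\psi^{-1}(c)$ that contains $J$; this is closed, lies in $\sI$, and contains every $K \in \sI$ with $K \supseteq J$, since any such $K$ is a subinterval of $\psi^{-1}(c)$ meeting $J$. Part (iii) follows: two maximal elements that met would, by (i), have their union in $\sI$, forcing them to coincide, so the maximal elements are pairwise disjoint and cover $E$; being pairwise disjoint non-degenerate intervals they are countable. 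I record this as $E = \bigsqcup_i M_i$, a countable disjoint union of non-degenerate closed intervals.

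The crux, from which (iv) and (v) both drop out, is the claim that if $(p,q) \subseteq E$ then $\psi$ is constant on $(p,q)$. To prove it I would fix a compact subinterval $[p',q'] \subseteq (p,q)$; the nonempty sets among $\{M_i \cap [p',q']\}$ are closed, pairwise disjoint, countable, and partition $[p',q']$. By the classical theorem of Sierpi\'nski that, if a non-degenerate closed interval is a union of countably many pairwise disjoint closed sets, then at most one of them is nonempty, exactly one is nonempty; hence $[p',q']$ lies in a single $M_i$ and $\psi$ is constant there, and letting $[p',q']$ exhaust $(p,q)$ gives the claim. (Alternatively one can run a direct gap/bisection argument producing a point of $(p,q)$ in no $M_i$.) I expect this to be the main obstacle, as it is precisely here that countability and disjointness of the $M_i$ must be played against the connectedness of the interval; the passage to $\psi$ is what makes the hypothesis usable.

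Finally, for (iv) I would use the identity $[a,b]\setminus \Tint E = \overline{[a,b]\setminus E}$ and show this set has no isolated points. If $t_0$ were isolated in it, a punctured neighbourhood of $t_0$ would lie in $\Tint E \subseteq E$; applying the claim to the two one-sided intervals and using continuity of $\psi$ at $t_0$ shows $\psi$ is constant on a full neighbourhood of $t_0$, so $t_0 \in \Tint E$, contradicting $t_0 \in \overline{[a,b]\setminus E}$. Thus $[a,b]\setminus\Tint E$ is closed with no isolated points, hence perfect when $E \neq [a,b]$ (in which case it is nonempty, as $[a,b]\setminus E \subseteq [a,b]\setminus\Tint E$). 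For (v), if $E \neq [a,b]$ then $\overline{[a,b]\setminus E}$ is a nonempty perfect subset of $[a,b]$, hence uncountable; since the points of $\overline{[a,b]\setminus E}$ lying in $E$ are limits of the complement, each is an endpoint of some $M_i$, so they form a countable set, and therefore $[a,b]\setminus E$ is itself uncountable.
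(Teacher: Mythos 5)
Your proof is correct, but it is organised around a device the paper does not use: the reduction of membership in $\sI$ to constancy of a single continuous function $\psi = H - f\circ\Gamma$, where $H(t)=\int_{\Gamma|_{[a,t]}}g$ (the continuity of $H$ rests on continuity of the arc-length function of a continuous rectifiable path, which you correctly invoke, together with additivity of the integral). The paper instead works directly with the definition of $\sI$: it proves (ii) by setting $a'=\inf\{d : [d,c]\in\sI\}$ and $b'=\sup\{d : [c,d]\in\sI\}$ and checking that $[a',b']$ is the required maximal interval; it deduces (iv) by citing, as well known, exactly the fact you prove via Sierpi\'nski's theorem (an open interval contained in a countable disjoint union of compact intervals must lie in a single one of them); and it proves (v) by contradiction --- if $[a,b]\setminus E$ were countable and nonempty, then $[a,b]\setminus\Tint E$ would also be countable, contradicting the fact that no nonempty, countable, compact subset of $\R$ can be perfect --- whereas you argue directly that a nonempty perfect set is uncountable and subtract the countable set of endpoints of the maximal intervals; these are the same idea in different clothing. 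What your route buys: parts (i)--(iii) become transparent statements about level sets and their connected components, the Sierpi\'nski citation makes the crucial covering fact self-contained rather than merely asserted, and the continuity of $\psi$ lets you run the isolated-point argument in (iv) uniformly, with no case split according to whether the isolated point itself lies in $E$. What the paper's route buys: brevity, and no need to introduce the integral function $H$ or to verify continuity of arc length.
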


\begin{proof} (i) This is essentially immediate from the definitions.\smallskip

(ii) Take $J \in \sI$, and let $c \in \Tint J$.
Set $b' = \sup \{d \in [a,b]: [c,d] \in \sI\}$ and
$a' = \inf \{d \in [a,b]: [d,c] \in \sI\}$. Then
it is easy to see that $a'< c <b'$ and that $[a',c]$ and $[c,b']$ are both in $\sI$,
whence (by (i)) $[a',b'] \in \sI$.
It is now clear that $[a',b']$ is the desired maximal element of $\sI$.\smallskip

(iii) This is clear from (i) and (ii).\smallskip

(iv) This follows quickly from the well-known fact that, if an open interval in $\R$ is contained in a countable disjoint union of compact intervals, then it must be entirely contained in one of the compact intervals.\smallskip

(v) Assume, for contradiction, that $[a,b]\setminus E$ is countable and non-empty.
Then, because of the nature of $E$, the set $[a,b]\setminus \Tint E$ would also be countable.
This contradicts the fact that no non-empty, countable, compact subset of $\R$ can be perfect.
The result follows.
\end{proof}
\sskip
The following example shows that $E$ may be dense in $[a,b]$, and yet not equal to $[a,b]$.
Recall that we denote the unit interval $[0,1]$ by $\I$.
\begin{example}
Consider the \lq identity path' $\Gamma:\I\rightarrow \C$, i.e.,
\[
\Gamma(t)=t \quad (t \in \I)\,.
\]
Let $f$ be the standard Cantor function on $\I$, and let $g$ be the zero function on $\I$, so that $f,g \in C(\I)$.
Then it is easy to see that the set $E$ from the preceding lemma
is simply the union of the closures of the complementary open intervals
of the Cantor set. In particular, $E$ is dense in, but not equal to, $\I$.
The complement of $E$ is uncountable, nowhere dense, and has Lebesgue measure $0$. However, by modifying
the Cantor set in a standard way, we may arrange that the complement of $E$ be uncountable, nowhere dense, and
have positive Lebesgue measure instead.
\QED
\end{example}

In \cite[Lemma 5.3]{Bland-Fein} it was shown that, in the case where $\paths$ is the set of all
rectifiable Jordan paths
in a compact plane set $X$,
$\BF{X}\subseteq A(X)$. However, investigation of
the proof
reveals that it is only necessary for $\paths$ to include sufficiently many admissible paths in
$\Tint X$. For example,
by Theorem \ref{F-reg-equality}, it is enough if, for every compact disc $D \subset \Tint X$,
$D$ is $\paths_D$-regular, where $\paths_D$ denotes the set of paths in $\paths$ whose images are contained in $D$.
Moreover, in this case, $\paths$-derivatives of functions in $\BF{X}$ agree with the usual derivatives of these functions on $\Tint X$.

We are now ready to establish a result connecting the spaces $\stddiff{X}$, $\Aone(X)$, and  $\BF{X}$.

\begin{theorem}
\label{mostly-interior}
Let $X$ be a compact plane set such that $\Tint X$ is dense in $X$.
Let $\paths$ be the set of all admissible paths $\gamma$ in $X$ such that
the complement of $\gamma^{-1}(\Tint X)$ is countable.
Then $\BF{X}$ is equal to $\Aone(X)$.
If $X$ is $\paths$-regular,
then both of these spaces are equal to $\stddiff{X}$.
\end{theorem}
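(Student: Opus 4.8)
The plan is to establish the set-theoretic equality $\BF{X}=\Aone(X)$ by proving the two inclusions separately, to check that the two norms coincide, and then to deduce the final clause from Theorem~\ref{F-reg-equality}. First I would record the preliminary facts needed to apply the earlier machinery. Writing $U=\Tint X$, I would note that $U$ is open and dense, so every point of $U$ has nondegenerate line segments lying in $U$ near it; such a segment is an admissible path $\gamma$ with $\gamma^{-1}(U)$ equal to its whole parameter interval, hence lies in $\paths$, and these segments have images covering the dense set $U$. Combined with the fact that restricting a path in $\paths$ to a non-degenerate subinterval preserves both admissibility and the countability of the complement of the preimage of $U$, this shows $\paths$ is effective; the same segments show $X$ is semi-rectifiable.

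For the inclusion $\Aone(X)\subseteq\BF{X}$, I would take $f\in\Aone(X)$ and let $h\in C(X)$ be the continuous extension of $(f|_U)'$, and claim that $h$ is an $\paths$-derivative of $f$. Fixing $\gamma\colon[a,b]\to\C$ in $\paths$, I would apply Lemma~\ref{maximal-intervals} with $\Gamma=\gamma$, the given $f$, and $g=h$. Since $f$ is analytic on $U$ with $f'=h$ there, the fundamental theorem of calculus for rectifiable paths shows that every closed subinterval whose $\gamma$-image lies in $U$ belongs to $\sI$; hence $\gamma^{-1}(U)\subseteq E$, so $[a,b]\setminus E$ is countable. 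By part~(v) of the lemma this forces $E=[a,b]$, and then by part~(iii) together with the connectedness of $[a,b]$ the unique maximal element of $\sI$ is $[a,b]$ itself, so $[a,b]\in\sI$. Taking the subpath $\gamma$ itself yields $\int_\gamma h = f(\gamma^+)-f(\gamma^-)$, as required. This is the crux of the argument, because a continuous function that is locally constant on a dense open set need not be globally constant; the countability hypothesis is exactly what converts the local identity into the global one, via part~(v) of Lemma~\ref{maximal-intervals}.

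For the reverse inclusion $\BF{X}\subseteq\Aone(X)$, I would take $f\in\BF{X}$ with $\paths$-derivative $g\in C(X)$. For any disc $D\subseteq U$ and any $a,z\in D$, the straight segment from $a$ to $z$ lies in $\paths$, so $f(z)-f(a)=\int_{[a,z]}g$. Fixing $a$ and letting $z\to z_1$, the continuity of $g$ and the estimate $|(z-z_1)^{-1}\int_{[z_1,z]}(g(w)-g(z_1))\dd w|\le \sup_{[z_1,z]}|g-g(z_1)|$ show that $f$ is complex-differentiable on $U$ with $f'=g|_U$; thus $f$ is analytic on $U$, and being continuous on $X$ it lies in $A(X)$, while $(f|_U)'=g|_U$ extends continuously (to $g$), so $f\in\Aone(X)$. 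The norms then agree: on $\BF{X}$ one has $\|f\|=|f|_X+|g|_X$, whereas on $\Aone(X)$ one has $\|f\|=|f|_X+|g|_U$, and $|g|_U=|g|_X$ since $g\in C(X)$ and $U$ is dense. Hence $\BF{X}=\Aone(X)$ isometrically.

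Finally, under the extra hypothesis that $X$ is $\paths$-regular, I have already shown that $X$ is semi-rectifiable and that $\paths$ is effective, so Theorem~\ref{F-reg-equality} gives $\stddiff{X}=\BF{X}$; combined with the equality just established, all three spaces coincide. I expect the only delicate point to be the first inclusion, where the countable-complement condition defining $\paths$ must be promoted to the global $\paths$-derivative identity through Lemma~\ref{maximal-intervals}(v); the second inclusion and the norm comparison are routine.
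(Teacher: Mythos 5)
Your proof is correct, and its overall shape matches the paper's: you prove the two inclusions separately and then invoke Theorem \ref{F-reg-equality} for the final clause, exactly as the paper does. The one place where you genuinely diverge is the inclusion $\BF{X} \subseteq \Aone(X)$. The paper obtains this by appealing to the remarks preceding the theorem (the adaptation of \cite[Lemma~5.3]{Bland-Fein}: since $\paths$ contains all admissible paths lying in $U = \Tint X$, one gets $\BF{X} \subseteq A(X)$ and the agreement of $\paths$-derivatives with ordinary derivatives on $U$). You instead give a self-contained difference-quotient argument: straight segments inside a disc $D \subseteq U$ belong to $\paths$, so $f(z)-f(z_1)=\int_{[z_1,z]}g$, and continuity of $g$ makes $f$ complex-differentiable on $U$ with $f'=g|_U$. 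This buys independence from the external citation and is more elementary; the paper's version buys brevity and consistency with the machinery it had already set up. Your checks that $\paths$ is effective, that $X$ is semi-rectifiable, and that the two norms coincide are details the paper leaves implicit, and they are right.

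One step is under-justified, although the conclusion you want is true. Having used Lemma \ref{maximal-intervals}(v) to get $E=[a,b]$, you pass to $[a,b]\in\sI$ by saying that part (iii) \lq\lq together with the connectedness of $[a,b]$'' forces a single maximal element. Connectedness alone rules out a partition of $[a,b]$ into \emph{finitely} many ($\geq 2$) disjoint non-empty closed sets, but the maximal elements of $\sI$ could a priori be countably infinite in number, and ruling out a countably infinite partition of $[a,b]$ into disjoint closed sets is Sierpi\'nski's theorem (or a Baire-category/perfect-set argument of the kind used to prove parts (iv) and (v) of the lemma), not a formal consequence of connectedness. A cleaner repair stays inside the lemma: let $[a',b']$ be the maximal element of $\sI$ containing some point of $\gamma^{-1}(\Tint X)$, and suppose $b'<b$; applying your countability argument and part (v) to the subpath $\gamma|_{[b',b]}$ produces an interval $[b',c]\in\sI$ with $c>b'$, and part (i) then gives $[a',c]\in\sI$, contradicting maximality. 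Hence $b'=b$, and similarly $a'=a$, so $[a,b]\in\sI$ as desired.
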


\begin{proof}
Set $U=\Tint X$.
Since $\paths$ includes all admissible paths which are contained in $U$, the
remarks above show that $\BF{X} \subseteq A(X)$ and that $\paths$-derivatives of functions in $\BF{X}$ agree with the usual derivatives of these functions on $U$. Since $U$ is dense in $X$, it follows that $\BF{X} \subseteq \Aone(X)$.

Now let $f \in \Aone(X)$, and let $g$ be the (unique) continuous extension to $X$ of $(f|_U)'$.
It follows easily from Lemma \ref{maximal-intervals} that $g$ is, in fact, an $\paths$-derivative of $f$.
This gives $\Aone(X) \subseteq \BF{X}$. The equality $\BF{X}=\Aone(X)$  follows.

Since $\paths$ is effective, the rest is an immediate consequence of Theorem \ref{F-reg-equality}.
\end{proof}
\sskip
It is easy to extend the last part of this result \jchange to cover finite unions of sets satisfying appropriate regularity conditions.
\sskip
\section{The algebra $\Aone(X)$}
\noindent
We shall now see that, even if $X$ is uniformly regular (and hence geodesically bounded) and has
dense interior, it need not be the case that $\DD(X)=\Aone(X)$.

\begin{theorem}
\label{Cantor-squares}
There exists a uniformly regular, polynomially convex, compact plane set such that
$\Tint X$ is dense in $X$, and yet $\wdiff{X}=\DD(X)\neq \Aone(X)$.
In particular, $R_0(X)$ is not dense in $(\Aone(X),\Jnorm)$.
\end{theorem}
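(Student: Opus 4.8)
The plan is to realise $X$ as a \emph{comb} of solid square teeth erected on the complementary intervals of the middle-thirds Cantor set. Let $K\subseteq\I$ be that Cantor set, and for each bounded complementary interval $I=(a,b)$ of $K$, of length $\ell=b-a$, let $T_I=(a,b)\times(0,\ell)$ be the open square standing on $I$ in the upper half-plane. I would take
\[
X = (\I\times\{0\}) \cup \bigcup_I \overline{T_I}\,,
\]
the union of the solid base segment (the \emph{spine}) with the closed teeth. The spine is needed to make $X$ connected, since the teeth meet the base only in the totally disconnected set $K$; crucially, however, the spine is one-dimensional, so $\Tint X=\bigcup_I T_I$ is a \emph{disjoint} union of open squares. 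This disconnection of the interior is exactly the mechanism that will separate $\Aone(X)$ from $\DD(X)$, since there is then no interior path joining distinct teeth.

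First I would record the easy structural facts. The complementary intervals are dense in $\I$ and accumulate on both sides at every point of $K$, so every point of $X$ is a limit of points of $\bigcup_I T_I=\Tint X$; hence $\Tint X$ is dense in $X$ (and $X$ is perfect). Since $X$ is a segment together with solid squares attached along their entire bases, it encloses no bounded complementary region, so $\C\setminus X$ is connected and $X$ is polynomially convex.

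The main obstacle is uniform regularity, which I would prove using the natural \lq down--across--up' paths. Given $z=x_z+\i y_z$ and $w=x_w+\i y_w$ in $X$, drop each point to the spine, travel along the (straight) spine, and climb to the other point, obtaining a path of length $y_z+|x_z-x_w|+y_w$. As $|x_z-x_w|\le|z-w|$, everything reduces to bounding $y_z+y_w$ by a fixed multiple of $|z-w|$. For this I would establish the geometric lemma that, for distinct complementary intervals $I,I'$ of lengths $\ell,\ell'$, one has $\dist(I,I')\ge c\min(\ell,\ell')$ for an absolute constant $c>0$; this follows from the self-similar triadic structure of $K$, since two gaps lying in different thirds of a common triadic interval are separated by the intervening middle third. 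Granting the lemma, and using $y_z\le\ell$, $y_w\le\ell'$ when $z\in\overline{T_I}$, $w\in\overline{T_{I'}}$, a short case analysis comparing $\min(\ell,\ell')$ with $|z-w|\ge\dist(I,I')$ and with $|z-w|\ge|y_z-y_w|$ gives $y_z+y_w\le C|z-w|$; the cases where $z,w$ lie in a common tooth or on the spine are trivial, and the constant is uniform by scale-invariance. Thus $X$ is uniformly regular, hence pointwise regular, and Proposition~\ref{pr-sufficient} shows that $\DD(X)$ is complete, i.e.\ $\wdiff{X}=\DD(X)$.

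Finally I would exhibit a function separating $\Aone(X)$ from $\DD(X)$. Let $\phi$ be the Cantor function on $\I$, and define $f$ on $X$ by $f=\phi$ on the spine and $f\equiv\phi|_I$ (a constant) on each closed tooth $\overline{T_I}$, consistently since $\phi$ is constant on each $I$. Then $f$ is continuous on $X$ (continuity across $K$ holds because the tooth-values $\phi|_I$ converge to $\phi(x_*)$ as $I\to x_*\in K$), and $f$ is locally constant, hence analytic, on $\Tint X$ with $(f|_{\Tint X})'=0$; as this extends continuously by $0$, we get $f\in\Aone(X)$. However, approaching $0\in K$ along the spine through the points $3^{-n}$ gives
\[
\frac{f(3^{-n})-f(0)}{3^{-n}-0}=\frac{\phi(3^{-n})}{3^{-n}}=\left(\tfrac32\right)^{n}\to\infty\,,
\]
so the difference quotient is unbounded and $f$ is not differentiable at $0$; thus $f\notin\DD(X)$ and $\DD(X)\neq\Aone(X)$. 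For the last assertion, $R_0(X)\subseteq\DD(X)$, and $\DD(X)$ is complete and isometrically included in $(\Aone(X),\norm)$, hence is a proper, $\norm$-closed subspace; therefore $R_0(X)$ cannot be dense in $(\Aone(X),\norm)$. The one step needing genuine care is the geometric separation lemma underpinning uniform regularity; the remainder is bookkeeping.
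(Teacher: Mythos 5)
Your construction coincides with the paper's own: the same set (the unit interval with closed squares erected on the closures of the complementary intervals of the middle-thirds Cantor set) and the same separating function (the Cantor function pulled back by horizontal projection), with the same completeness-plus-non-differentiability argument, so the proposal is correct and takes essentially the same approach. The only difference is one of detail: you supply an explicit verification of uniform regularity via the gap-separation lemma, a step the paper dismisses as ``easily seen''.
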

\begin{proof}
An example, based on the Cantor set, is shown in Figure \ref{cantor-example}, below.

\begin{figure}[htb]
\Jpdf{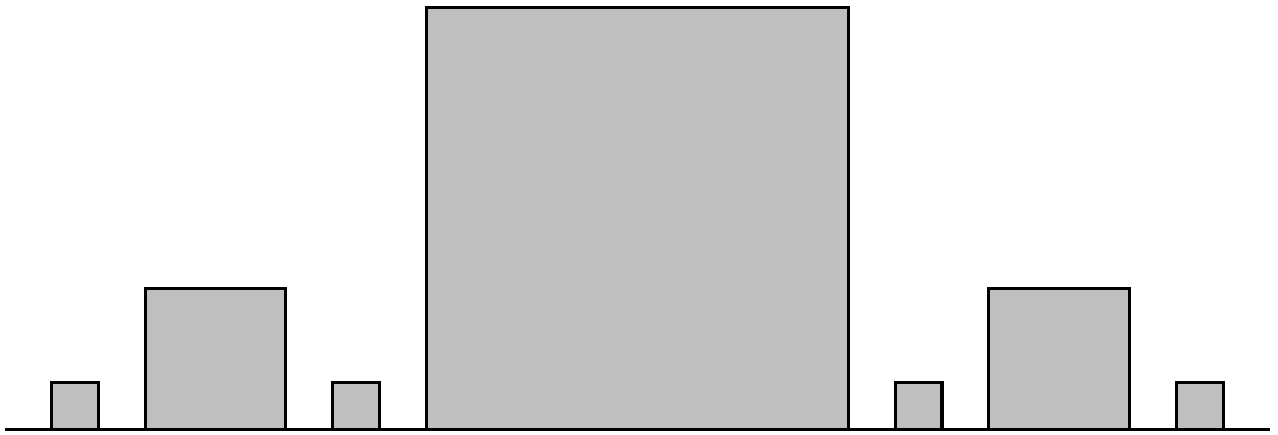}
\caption{The Cantor set with squares attached}
\label{cantor-example}
\end{figure}

Let $(I_n)_{n=1}^\infty$ be an enumeration of the closures of the complementary open intervals in the
standard Cantor middle thirds set, and say $I_n$ has length $l_n$.
Set
\[
   X = [0,1] \cup \bigcup_{n=1}^\infty \{x+\i y : x \in I_n,\, y \in [0,l_n] \}\,,
\]
i.e., $X$ is the set obtained by attaching to the unit interval
a closed square with base equal to $I_n$, for all $n \in \N$.
Then $X$ is easily seen to be a uniformly regular, compact plane set, and so $\DDXN$ is complete.

Let $g:[0,1]\rightarrow[0,1]$ be the usual Cantor function, and define
$f \in C(X)$ by
\[
f(x+ \i y)=g(x)\quad  (x+\i y \in X )\,.
\]
Then $f$ is locally constant on $\Tint X$, and so it is clear that $f \in \Aone(X)$.
However, $f$ is not differentiable on $[0,1]$, and so
$f \in \Aone(X) \setminus \DD(X)$.

Since $\DD(X)$ is a proper closed subalgebra of $\Aone(X)$, it follows immediately that $R_0(X)$ is not dense in $(\Aone(X),\Jnorm)$.
\end{proof}
\sskip
We do not know whether or not there is such an example with the additional property that the set $\Tint X$ is connected.

Theorem \ref{Cantor-squares} should be compared with the following polynomial approximation result, which is \cite[Theorem 5.8]{Bland-Fein} (see also \cite[Proposition 3.2.4]{Dales-Thesis}).
\sskip
\begin{theorem}
Let $X$ be a polynomially convex, geodesically bounded, compact plane set, and
let $\paths$ be the set of all admissible paths in $X$. Then $P_0(X)$ is dense in the Banach algebra
$(\BF{X},\Jnorm)$.\QED
\end{theorem}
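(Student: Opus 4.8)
The plan is to fix an arbitrary $f \in \BF{X}$ with $\paths$-derivative $g = f'$ and an arbitrary $\varepsilon > 0$, and to produce a polynomial $Q$ with $\|f-Q\| = |f-Q|_X + |(f-Q)'|_X \leq C\varepsilon$ for a constant $C$ depending only on $X$. First I would record the standing facts. Since $X$ is geodesically bounded it is rectifiably connected, so every pair of distinct points of $X$ is joined by an admissible path; hence $\paths$ (the family of \emph{all} admissible paths in $X$) is effective and $X$ is semi-rectifiable. By Theorem \ref{BF-complete}, $(\BF{X},\norm)$ is then a Banach function algebra in which $\paths$-derivatives are unique, and it contains $P_0(X)$, since each polynomial lies in $\DD(X)$ and, by Lemma \ref{DDsubBF}(i), its ordinary derivative is its $\paths$-derivative. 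Thus the statement is meaningful, and it suffices to carry out the approximation just described.

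The crucial point is that the derivative $g$ is itself uniformly approximable by polynomials. Because $\paths$ contains every admissible path lying in $\Tint X$, the remarks preceding Theorem \ref{mostly-interior} apply: $\BF{X} \subseteq A(X)$, and the $\paths$-derivative of any element of $\BF{X}$ agrees with its usual derivative on $U := \Tint X$. Hence $f|_U \in O(U)$ and $g|_U = (f|_U)'$ is analytic on $U$; since $g \in C(X)$, this gives $g \in A(X)$. As $X$ is polynomially convex, Mergelyan's theorem yields $A(X) = P(X)$, so $g$ lies in the uniform closure of $P_0(X)$. Consequently there is a polynomial $q$ with $|q - g|_X < \varepsilon$.

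Next I would use an antiderivative trick together with the geodesic bound. Let $Q$ be a polynomial antiderivative of $q$, normalised (by adjusting the additive constant) so that $Q(z_0) = f(z_0)$ for a fixed base point $z_0 \in X$; then $Q \in P_0(X)$, and by Lemma \ref{DDsubBF}(i) the $\paths$-derivative of $Q$ is $q$. Writing $h = f - Q \in \BF{X}$, its $\paths$-derivative is $g - q$, so $|h'|_X < \varepsilon$ while $h(z_0) = 0$. For any admissible path $\gamma \in \paths$ from $z_0$ to a point $w$ we have $h(w) = h(w) - h(z_0) = \int_\gamma h'$, whence $|h(w)| \leq |\gamma|\,|h'|_X$; taking the infimum over such paths gives $|h(w)| \leq \delta(z_0,w)\,|h'|_X$. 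Letting $D$ denote the geodesic diameter of $X$, which is finite by hypothesis, we obtain $|h|_X \leq D\varepsilon$, and therefore $\|f - Q\| = |h|_X + |h'|_X \leq (D+1)\varepsilon$. Since $\varepsilon$ was arbitrary and $Q \in P_0(X)$, this shows that $P_0(X)$ is dense in $(\BF{X},\norm)$.

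The main obstacle is the step establishing $g \in A(X)$, and hence $g \in P(X)$: the whole argument hinges on the $\paths$-derivative being analytic on $\Tint X$, which is exactly what the inclusion $\BF{X} \subseteq A(X)$ (with the $\paths$-derivative agreeing with the ordinary derivative on $\Tint X$) provides. Once $g$ is known to be polynomially approximable, the antiderivative-plus-geodesic-bound mechanism is routine, the geodesic boundedness being precisely what converts control of $|h'|_X$ into control of $|h|_X$.
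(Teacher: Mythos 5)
Your proof is correct. Note, however, that the paper does not prove this statement itself: it is imported verbatim as \cite[Theorem 5.8]{Bland-Fein} (see also \cite[Proposition 3.2.4]{Dales-Thesis}) and is stated without proof, so there is no in-paper argument to compare yours against. What you have written is a legitimate, self-contained derivation from results available in the present paper, and it follows the same strategy as the cited original: show that the $\paths$-derivative $g$ lies in $A(X)$ (your appeal to the remarks preceding Theorem \ref{mostly-interior} is valid here, since every compact disc $D \subseteq \Tint X$ is trivially $\paths_D$-regular when $\paths$ consists of \emph{all} admissible paths in $X$), apply Mergelyan's theorem to obtain a polynomial $q$ uniformly close to $g$, and then convert uniform control of $g-q$ into norm control of $f-Q$ via an antiderivative and the finite geodesic diameter. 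The only step you might have spelled out is the identification of the infimum of $\lv\gamma\rv$ over admissible paths $\gamma$ from $z_0$ to $w$ with $\delta(z_0,w)$, which is defined as an infimum over all rectifiable paths; this follows from the arc-length reparametrization remark in Section 2 (the admissible reparametrization has the same endpoints, image, and length), or directly by using the geodesic attaining $\delta(z_0,w)$, which is a Jordan path and hence admissible after reparametrization.
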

Let $X$ be a semi-rectifiable, compact plane set, and let $\paths$ be the set of all admissible paths in $X$.
Obviously, if $P_0(X)$ is dense in $(\BF{X},\Jnorm)$, then
$P_0(X)$ is also dense in $(\DD(X),\norm)$. Thus
$P_0(X)$ is dense in $\DD(X)$ for every poly\-nomially convex, geodesically bounded, compact plane set $X$.
This last fact is also given by the argument in \cite[Proposition 3.2.4]{Dales-Thesis}, where it is noted that, for each pointwise regular, compact plane set $X$, $P_0(X)$ is dense in $\DDXN$ if and only if $X$ is polynomially convex. We do not know whether or not $P_0(X)$ is dense in $\DDXN$ for every polynomially convex, perfect, compact plane set $X$.

Our final result of this section shows that, even for uniformly regular, compact plane sets $X$ with dense interior, $\Aone(X)$ need not be dense in the uniform algebra $A(X)$.

\begin{theorem}  There is a compact plane set $X_0$ with the following properties:\smallskip

{\rm (i)} $X_0$ is uniformly regular, $\Tint X_0$ is connected and simply connected, and $\Tint X_0$ is dense in $X_0$;\smallskip

{\rm (ii)} $\widetilde{D}^{(1)}(X_0) =  {D}^{(1)}(X_0) = \Aone(X_0) \subseteq R(X_0)\/$;\smallskip

 {\rm (iii)} $\Aone(X_0) $ is not dense in $(A(X_0),\lv\,\cdot\,\rv_{X_0})$.
\end{theorem}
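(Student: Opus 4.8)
The plan is to construct $X_0$ explicitly as a uniformly regular set obtained by excising from a disk a countable family of \lq pockets', and then to read off the three assertions from the general theory developed above together with the classical theory of rational approximation.

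For the construction, I would start with a closed disk (or square) $Y$ and remove from it a sequence of disjoint open pockets $(P_n)$, where each $P_n$ is the interior of a wedge-shaped region that meets the rest of its complementary component only through a single \lq pinch point' $p_n \in \partial P_n$. Because of this pinching, each $P_n$ becomes a \emph{separate bounded component} of $\C \setminus X_0$; this is what will ultimately force $\C \setminus X_0$ to have infinitely many components. At the same time, since each $P_n$ is attached to $X_0$ only across its pinch, the interior $U := \Tint X_0$ does not encircle any $P_n$: carving the pockets out of $Y$ amounts to making radial cuts that never close up, so $U$ remains connected and simply connected, and $U$ is dense in $X_0$ by construction. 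I would take the pockets to be genuine wedges, with width comparable to the distance from $p_n$, rather than thin slits, precisely so that any two nearby points of $X_0$ can be joined by a path in $X_0$ that skirts the relevant pocket with controlled length; this is what yields uniform regularity. Finally, choosing the diameters of the $P_n$ to tend to $0$ with $\sum_n \operatorname{diam} P_n < \infty$, and arranging the pinch points $(p_n)$ to be dense in a suitable subarc, is the mechanism by which rational approximation will be made to fail.

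Granting such an $X_0$, assertion (i) is immediate from the construction. For (ii), I note first that a uniformly regular set is pointwise regular, so $\DD(X_0)$ is complete by Proposition~\ref{pr-sufficient}, whence $\wdiff{X_0} = \DD(X_0)$. To obtain $\DD(X_0) = \Aone(X_0)$ I would apply Theorem~\ref{mostly-interior}, taking $\paths$ to be the family of admissible paths whose parameter-preimage of $U$ has countable complement; the point requiring verification is that $X_0$ is $\paths$-regular, i.e. that every pair of points can be joined by a short path lying in $U$ apart from countably many parameter values. This is exactly where the connectedness (and \lq fatness') of $U$ enters, and where the present example differs from that of Theorem~\ref{Cantor-squares}, whose interior was disconnected and for which the corresponding regularity failed. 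The inclusion $\Aone(X_0) = \DD(X_0) \subseteq R(X_0)$ is then immediate from Proposition~\ref{sub-R(X)}, since $X_0$ is uniformly regular.

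For (iii), the crucial observation is that these inclusions pin down the uniform closure of $\Aone(X_0)$: from $R_0(X_0) \subseteq \Aone(X_0) \subseteq R(X_0)$ and the density of $R_0(X_0)$ in $R(X_0)$, the uniform closure of $\Aone(X_0)$ is exactly $R(X_0)$. Hence it suffices to prove that $R(X_0) \neq A(X_0)$, which I would establish by producing a nonzero measure on $X_0$ that annihilates $R_0(X_0)$ but not $A(X_0)$, in the manner of the standard Swiss-cheese examples, using the pockets $(P_n)$ as the \lq holes' and exploiting both $\sum_n \operatorname{diam} P_n < \infty$ and the density of the pinch points. I expect this last step to be the main obstacle: one must reconcile the two competing demands on the pockets --- that they be \emph{fat} enough to keep $X_0$ uniformly regular, yet \emph{small and densely distributed} enough to defeat rational approximation --- while carrying out the topological bookkeeping that guarantees simultaneously that $\C \setminus X_0$ is disconnected and that $U$ is connected and simply connected.
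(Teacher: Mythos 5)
Your plan for (i) and (ii) is essentially the paper's own: the authors also delete from the closed unit disc a sequence of holes that are ``pinched'' at single points, so that a detour around any hole can be routed through $\Tint X_0$ except for countably many parameter values; they then get completeness from Proposition \ref{pr-sufficient}, the inclusion in $R(X_0)$ from Proposition \ref{sub-R(X)}, and $\DD(X_0)=\Aone(X_0)$ by verifying $\paths$-regularity for exactly the family of Theorem \ref{mostly-interior}. (Their pinching device is a pair of internally osculating discs $D_n\subseteq \Delta_n\cup\{z_n\}$: the annular buffer $\Delta_n\setminus\overline{D}_n$ lies in $\Tint X_0$, and detours pass through the single bad point $z_n$.) Your derivation of (iii) from (ii) --- that $R_0(X_0)\subseteq \Aone(X_0)\subseteq R(X_0)$ forces the uniform closure of $\Aone(X_0)$ to be $R(X_0)$, so that it suffices to have $R(X_0)\neq A(X_0)$ --- is also exactly the paper's.

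The genuine gap is your mechanism for $R(X_0)\neq A(X_0)$, and you have (correctly) flagged this as the main obstacle without resolving it. The recipe you propose --- pockets with summable diameters, pinch points dense in a subarc, and the annihilating measure built from $\rd z$ on the outer circle and the pocket boundaries --- is the classical Swiss-cheese argument for $R(X)\neq C(X)$, and it relies on $A(X)=C(X)$, i.e.\ on \emph{empty} interior. Here $\Tint X_0$ is dense, so you must exhibit a function that is continuous on $X_0$, analytic on the dense open set $\Tint X_0$, and yet not annihilated by your measure; density of pinch points on an ordinary (rectifiable, hence zero-area) subarc cannot supply one. Indeed, in such a construction the inner boundary of $X_0$ (the accumulation set of the pockets, which is the only part of $\partial X_0$ not lying on the boundary of a complementary component) has finite length; sets of finite length have zero continuous analytic capacity (Besicovitch), and it is a known consequence of Vitushkin's theorem that $R(X)=A(X)$ when the inner boundary of $X$ has zero continuous analytic capacity --- so for your set, (iii) would actually \emph{fail}. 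The missing idea, which is the key ingredient of the paper's proof, is to make the deleted discs accumulate precisely on a Jordan arc $J$ of \emph{positive area} meeting the unit circle in exactly one point; the positive area of $J$ is what makes $R(X_0)\neq A(X_0)$ compatible with $\Tint X_0$ being dense, connected, and simply connected. The paper does not prove this from scratch: it starts from the set constructed in \cite[Chapter VIII, \S 9]{Gamelin} (Example 9.2 there), for which $R(X)\subsetneq A(X)$ is already established, and then inserts the osculating discs to gain $\paths$-regularity without losing that inequality. To repair your argument you would have to replace the ``suitable subarc'' by such a positive-area arc and either quote or reconstruct Gamelin's example, at which point your construction becomes the paper's.
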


\begin{proof}   We consider the compact plane sets $X$  that are constructed in \cite[Chapter VIII, \S9]{Gamelin} to show that we may have $R(X)\subsetneq A(X)$.

The sets $X$ have the following form. Let  $(\Delta_n : n\in\N)$ be a sequence of open discs in the closed unit disc $\overline{\D}$ such that the family $\left\{\overline{\Delta}_n : n\in\N\right\}$  of closed discs is pairwise disjoint and $\sum_{n=1}^\infty r_n < \infty$, where $r_n$ is the radius of $\Delta_n$ for each $n\in \N$.  Then
\[
X = \overline{\D}\setminus \bigcup\{\Delta_n : n\in\N\}\,,
\]
and so $X$ is a compact plane set.

We \textit{claim} that each such set $X$ is uniformly regular.  To see this, take $z,w \in X$ with $z\neq w$, and first join $z$ and $w$ by a straight line $\ell$ in $\C$ of length $\lv z-w\rv $. Suppose  that $\ell\cap \Delta _n \neq \emptyset$ for some $n\in \N$.  Then the straight line $\ell\cap \Delta_n$, of length $t_n$, say, is replaced by an arc in the frontier of $\Delta_n $ of length at most $\pi t_n$.  It is not hard to show that  we obtain a path $\gamma$ in $X$ from $z$ to $w$  such that $\lv \gamma \rv \leq \pi \lv z-w\rv$, and so $\delta(z,w) \leq \pi \lv z-w\rv$. Thus $X$ is uniformly regular.

Let $J$ be a Jordan arc in $\overline \D$ such that $J$ has positive area and such that $J$ meets the unit circle in exactly one point.
The sets $\Delta_n$ may then be chosen so that the sequence of sets $(\Delta_n)$ accumulates precisely on $J$, in the sense that \jnew
\[
\bigcap_{n=1}^\infty \overline{\bigcup_{k=n}^\infty \Delta_k} = J\,.
\]
 It is shown in \cite[Chapter VIII, Example 9.2]{Gamelin} that this can be done in such a way that
each closed disc $\overline{\Delta}_n$ meets $J$ in exactly one point, say $z_n$: we then have that
$\Tint X$ is connected and simply connected, and that $\Tint X$ is dense in $X$. For  any set $X$ of this form, we have $R(X) \neq A(X)$.

Suppose that we have chosen discs $\Delta_n$ in this way.
For each $n\in\N$, choose  a new open disc $D_n$  with
\[
z_n \in \overline{D}_n  \subseteq \Delta_n \cup\{z_n\}\,,
\]
so that $D_n$ and $\Delta_n$ osculate at $z_n$.
Set
\[
X_0 = \overline{\D}\setminus \bigcup\{D_n : n\in\N\}\,,
\]
so that, for each $n \in \N$, the boundary circle $\partial \Delta_n$
is a subset of $\Tint X_0\cup\{z_n\}$.

It is clear that $X_0$ also satisfies clause (i) and that we still have $R(X_0) \subsetneq A(X_0)$. Since $X_0$ is uniformly regular, it follows from Propositions \ref{pr-sufficient} and \ref{sub-R(X)} that
\[
R_0(X_0) \subseteq \widetilde{D}^{(1)}(X_0) =  {D}^{(1)}(X_0)   \subseteq R(X_0)\,.
\]

 Let $z,w \in X_0$, and again join $z$ and $w$ by a straight line $\ell$ in $\C$ of length $\lv z-w\rv $.  Suppose  that $\ell\cap \Delta _n \neq \emptyset$ for some $n\in \N$.  Then we see by geometrical considerations that the straight line $\ell\cap \overline{\Delta}_n$, of length $t_n$, say, may be  replaced by a  path in $(\Delta_n\setminus \overline{D}_n)\cup\{z,w, z_n\}$ of length at most $(\pi +1)t_n$, and so  $z$ and $w$ can be joined by an admissible path $\gamma$ with $\lv \gamma \rv \leq (\pi +1)\lv z-w\rv$ such  that $\gamma$ is contained in
\[
\Tint X_0 \cup\{z,w\} \cup\{z_n : n\in \N\}\,,
\]
and such that the complement of  $\gamma^{-1}(\Tint X_0)$ is a countable set.

  Let $\mathcal F$ be the family specified in Theorem \ref{mostly-interior}. Then we have shown that $X_0$ is $\mathcal F$-regular, and so ${D}^{(1)}(X_0) = \Aone(X_0)$, giving (ii).

It follows from (ii) that the uniform closure of $\Aone(X_0) $  is $R(X_0)$. Clause (iii) follows because $R(X_0) \neq A(X_0)$.
\end{proof}

\section{Completeness of $\DD(X)$}
\noindent
We now return to the question of the completeness of $\DDXN$.

The following result is \cite[Proposition 3.1.4]{Dales-Thesis}; it was rediscovered by Honary and Mahyar in \cite{Honary-Mahyar1}.
\begin{theorem}
\label{D1-complete}
Let $X$ be a perfect, compact plane set. Then
$\DDXN$  is complete if and only if, for each $z \in X$, there exists $A_z > 0$
such that, for all $f \in \DD(X)$ and all $w \in X $, we have
\begin{equation}
|f(z)-f(w)|\leq A_z (|f|_X + |f'|_X)|z-w|\,.\label{D1-condition}
\end{equation}
\end{theorem}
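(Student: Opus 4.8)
The plan is to treat the two implications separately, using throughout the observation that a sequence $(f_n)$ which is Cauchy for $\norm$ is automatically uniformly Cauchy together with its derivatives: since $\|f_m - f_n\| = |f_m - f_n|_X + |f_m' - f_n'|_X$, both $(f_n)$ and $(f_n')$ converge uniformly on $X$, say $f_n \to f$ and $f_n' \to g$ with $f,g \in C(X)$. Thus completeness of $\DD(X)$ amounts precisely to the assertion that, for every such Cauchy sequence, the uniform limit $f$ is differentiable on $X$ with $f' = g$; once this is known, $\|f_n - f\| \to 0$ follows at once, so $(f_n)$ converges in $\DD(X)$.

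For the implication that (\ref{D1-condition}) forces completeness, I would fix a Cauchy sequence with limits $f,g$ as above, fix $z \in X$, and establish differentiability of $f$ at $z$ with $f'(z) = g(z)$ by a three-term decomposition: for $w \neq z$,
\[
\frac{f(w) - f(z)}{w - z} - g(z)
= \frac{(f - f_n)(w) - (f - f_n)(z)}{w - z}
+ \left( \frac{f_n(w) - f_n(z)}{w - z} - f_n'(z) \right)
+ \bigl( f_n'(z) - g(z) \bigr).
\]
The third term is at most $|f_n' - g|_X$, and the second tends to $0$ as $w \to z$ because each $f_n$ is genuinely differentiable at $z$. The first term is the crucial one. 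Since $f_m - f_n \in \DD(X)$, hypothesis (\ref{D1-condition}) gives $|(f_m - f_n)(w) - (f_m - f_n)(z)| \le A_z \|f_m - f_n\|\,|w - z|$; letting $m \to \infty$ and using the uniform convergence of $f_m$ and $f_m'$ yields $|(f - f_n)(w) - (f - f_n)(z)| \le A_z (|f - f_n|_X + |g - f_n'|_X)|w - z|$, so the first term is bounded by $A_z(|f - f_n|_X + |g - f_n'|_X)$ uniformly in $w$. A standard argument — first choosing $n$ large, then $w$ close to $z$ — then makes the left-hand side arbitrarily small, giving $f'(z) = g(z)$ for every $z$, hence $f \in \DD(X)$.

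For the converse, I would assume $\DD(X)$ is complete, fix $z \in X$, and for each $w \in X \setminus \{z\}$ consider the linear functional $T_w \colon f \mapsto (f(z) - f(w))/(z - w)$ on $\DD(X)$. Each $T_w$ is bounded, with $\|T_w\| \le 2/|z - w|$. I claim the family $\{T_w\}$ is pointwise bounded: for a fixed $f \in \DD(X)$, differentiability of $f$ at $z$ gives $T_w(f) \to f'(z)$ as $w \to z$, so $T_w(f)$ is bounded for $w$ near $z$, while for $w$ bounded away from $z$ we have $|T_w(f)| \le 2|f|_X/|z - w|$, which is bounded on the relevant compact set; hence $\sup_w |T_w(f)| < \infty$. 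As $\DD(X)$ is now a Banach space, the uniform boundedness principle supplies $A_z := \sup_w \|T_w\| < \infty$, which is precisely (\ref{D1-condition}).

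The two limiting arguments are routine; the one genuinely delicate point is in the first implication, where $f$ is not a priori known to lie in $\DD(X)$, so the hypothesis cannot be applied to $f - f_n$ directly but must instead be obtained by applying it to $f_m - f_n \in \DD(X)$ and passing to the limit $m \to \infty$. In the converse direction the crux is simply recognizing that completeness is exactly what licenses the appeal to Banach--Steinhaus.
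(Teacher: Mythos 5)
Your proposal is correct, and both directions are complete and rigorous. Note, however, that the paper itself offers no proof of this theorem: it is stated with a reference to Proposition 3.1.4 of Dales's thesis and to the paper of Honary and Mahyar, so there is no argument in the paper against which to compare yours. Your proof is the natural (and, as far as the cited sources go, the standard) one. In the direction where the inequality implies completeness, you correctly identify and handle the one delicate point: the uniform limit $f$ is not yet known to lie in $\DD(X)$, so the hypothesis must be applied to the differences $f_m - f_n \in \DD(X)$ and the bound $\lv (f-f_n)(w)-(f-f_n)(z)\rv \leq A_z(\lv f-f_n\rv_X + \lv g-f_n'\rv_X)\lv w-z\rv$ extracted by letting $m \to \infty$; this makes the first term of your decomposition small uniformly in $w$, after which the usual choose-$n$-then-$w$ argument gives $f'(z)=g(z)$ at every point, hence $f \in \DD(X)$ and $\norm$-convergence. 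In the converse direction, the pointwise boundedness of the functionals $T_w$ (bounded near $z$ by differentiability of each fixed $f$ at $z$, bounded away from $z$ by the trivial estimate $2\lv f\rv_X/\lv z-w\rv$) is exactly what is needed for Banach--Steinhaus, and completeness is precisely the hypothesis that licenses its use. No gaps.
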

\lift\QED
\smallskip
Note that $X$ need not be connected here
(However, the condition implies that $X$ has only finitely many components.)

For pointwise regular $X$, this condition is
certainly satisfied, and indeed the $|f|_X$ term may be omitted from the right-hand side
of (\ref{D1-condition}).
We now show that this $|f|_X$ term may also be omitted under the weaker assumption that $X$ be connected. First we require a lemma concerning functions whose derivatives are constantly $0$.
\smallskip
\begin{lemma}
\label{zero-derivative}
Let $X$ be a connected, compact plane set for which $(\DD(X), \norm)$ is complete.
Take $f \in \DD(X)$  such that $f'=0$. Then $f$ is a constant.
\end{lemma}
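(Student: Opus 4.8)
The plan is to extract the result from the completeness criterion of Theorem \ref{D1-complete} by applying its inequality to the \emph{powers} of $f$, and then to run a connectedness argument. First I would fix a point $z_0 \in X$ and set $g = f - f(z_0)\,1 \in \DD(X)$, so that $g' = 0$ and $g(z_0) = 0$. Since $\DD(X)$ is an algebra on which the product rule holds, for each $n \in \N$ we have $g^n \in \DD(X)$ with $(g^n)' = n g^{n-1} g' = 0$; hence $\jnorm{g^n} = |g^n|_X = |g|_X^n$. Writing $\rho = |g|_X$ and feeding $g^n$ into the inequality (\ref{D1-condition}) guaranteed by completeness, I obtain for each $z \in X$ a constant $A_z > 0$ with
\[
|g(z)^n - g(w)^n| \leq A_z\,\rho^n\,|z-w| \qquad (w \in X,\ n \in \N)\,.
\]
The whole point of passing to $g^n$ is that its $\DD(X)$-norm is exactly $\rho^n$, with no contribution from the derivative.

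The central step is to prove that the set $S = \{w \in X : |g(w)| = \rho\}$ is open in $X$; it is automatically non-empty and closed, since $|g|$ is continuous and attains its maximum $\rho$. Given $w_0 \in S$, so that $|g(w_0)| = \rho$, the displayed inequality at $z = w_0$ yields $\rho^n - |g(w)|^n \leq |g(w_0)^n - g(w)^n| \leq A_{w_0}\rho^n|w_0 - w|$, and hence $|g(w)| \geq \rho\,(1 - A_{w_0}|w_0 - w|)^{1/n}$ for every $w$ with $|w_0 - w| < 1/A_{w_0}$. Letting $n \to \infty$ (the base is positive, so the exponentials tend to $1$) forces $|g(w)| \geq \rho$, and therefore $|g(w)| = \rho$. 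Thus $S$ contains the relative ball of radius $1/A_{w_0}$ about $w_0$, so $S$ is open. I expect this amplification to be the main obstacle: the bare inequality is only a pointwise Lipschitz-type estimate and says nothing about local behaviour, and the trick is that raising to the $n$-th power and exploiting $|g(w_0)| = \rho$ converts it, in the limit, into the \emph{local constancy of $|g|$ at maximum-modulus points}.

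Finally, since $X$ is connected and $S$ is a non-empty clopen subset, we must have $S = X$. In particular $z_0 \in S$, so $\rho = |g(z_0)| = |f(z_0) - f(z_0)| = 0$; hence $g \equiv 0$, that is, $f$ is constant (equal to $f(z_0)$), as required. I note that the completeness hypothesis is used in an essential way here, precisely to supply the inequality that drives the openness of $S$; without it the conclusion fails, as the behaviour on Bishop's arc shows.
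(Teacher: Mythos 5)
Your proof is correct, and it runs on the same engine as the paper's own proof: both feed powers of the function (whose derivative vanishes, so that the $\DD(X)$-norm collapses to the $n$-th power of the uniform norm) into the inequality of Theorem~\ref{D1-complete}, let $n\to\infty$, and then invoke connectedness. The paper packages this contrapositively---normalizing $f$ via $(1+f)/2$ so that its maximum-modulus set becomes the peak set $\{z\in X : f(z)=1\}$, and deriving a contradiction from a sequence converging to a non-interior point of that set---whereas you prove directly that the maximum-modulus set of $g$ is relatively open via the $n$-th root trick; these are the same amplification argument in different clothing.
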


\begin{proof}   Assume towards a contradiction that there exists $f \in \DD(X)$  such that $f'=0$ and such that   $f$ is not a constant;  we can suppose that $\lv f \rv_X =1$ and that $1 \in f(X)$. By replacing $f$ by $(1+f)/2$ if necessary, we may also suppose that $1$ is the only value of modulus $1$ taken by $f$ on $X$. Set
\[Y = \{z\in X : \lv f(z)\rv =1\} = \{z\in X : f(z) =1\}\,.\]
Then $Y$ is a closed, non-empty subset of $X$,  and $Y \neq X$ because $f$ is not constant.  Since $X$ is connected, the subset $Y$ is not open in $X$, and so there exist $w_0 \in Y$ and  a sequence $(z_k)$ in $X\setminus Y$ such that $z_k \to w_0$ as $k\to\infty$.

For each $n \in \N$, set $g_n = 1-f^n$, so that $g_n \in D^{(1)}(X)$.  For each $n\in\N$, we have $g_n(w_0)=0$,
$g_n' = -nf^{n-1}f' =0$, and $\lv g_n\rv_X \leq 2$. By Theorem \ref{D1-complete}, there is a constant $C>0$ such that
\[
\lv g_n(z_k)\rv \leq 2C\lv z_k-w_0\rv\quad(n,k\in\N)\,.
\]
For each $k\in \N$, we have $\lim_{n\to\infty}g_n(z_k) =1$ because $\lv f(z_k)\rv < 1$, and so
\[
1 \leq 2C\lv z_k-w_0\rv\,\;(k\in\N)\,.
 \]
But $\lim_{k\to \infty}\lv z_k-w_0\rv =0$, and so this is the required contradiction.
\end{proof}
\smallskip
Note that this result fails without the assumption that $(\DD(X), \norm)$ is complete. For example, if $\gamma$ is a Jordan path which is a subpath of the famous Koch snowflake curve, then the function $f=\gamma^{-1}$ has derivative $0$ on the Jordan arc $\Jim{\gamma}$.
\smallskip
We are now ready to eliminate the $|f|_X$ term from the right-hand side
of equation (\ref{D1-condition}) under the assumption that $X$ is connected.

For convenience, we introduce the following notation.
Let $X$ be a perfect, compact plane set, and let $z_0 \in X$. Then we define
\[
\Mone=\{f \in \DD(X): f(z_0)=0\}\,,
\]
so that $\Mone$ is a maximal ideal in $\DDX$.
\begin{theorem}
 \label{term-eliminated-thm}
 Let $X$ be a connected, compact plane set  for which $(D^{(1)}(X), \norm)$ is complete.  Let $z_0 \in X$.  Then there exists a constant $C_1 >0$ such that, for all $f \in \Mone$, we have
 \[
 \lv f\rv_X \leq C_1\lv f'\rv_X\,.
 \]
Furthermore, there exists another constant $C_2 > 0$ such that, for all $f\in\DD(X)$  and all $w \in X$, we have
\begin{equation}\label{term-eliminated}
\lv f(z_0)-f(w)\rv \leq C_2\lv f'\rv_X\lv z_0-w\rv\,.
 \end{equation}
\end{theorem}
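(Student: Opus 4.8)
The plan is to prove the first inequality by a normal-families argument combined with Lemma~\ref{zero-derivative}, and then to deduce the displayed inequality~(\ref{term-eliminated}) quickly from the first part together with Theorem~\ref{D1-complete}.

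For the first inequality I would argue by contradiction. If no constant $C_1$ works, then for each $n \in \N$ there is some $f_n \in \Mone$ with $\lv f_n\rv_X > n\lv f_n'\rv_X$; after rescaling I may assume $\lv f_n\rv_X = 1$, so that $\lv f_n'\rv_X < 1/n \to 0$. The sequence $(f_n)$ is uniformly bounded, and Theorem~\ref{D1-complete} supplies, for each $z \in X$, a constant $A_z$ with $\lv f_n(z) - f_n(w)\rv \le A_z(1 + \lv f_n'\rv_X)\lv z-w\rv$ for all $w$ and all $n$; since $\lv f_n'\rv_X < 1$ for every $n$, the right-hand side is at most $2A_z\lv z-w\rv$, so $(f_n)$ is equicontinuous at each point of $X$, and hence, as $X$ is compact, uniformly equicontinuous. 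By the Arzel\`a--Ascoli theorem I would pass to a subsequence converging uniformly to some $f \in C(X)$, with $\lv f\rv_X = 1$ and $f(z_0) = 0$. Because $f_n' \to 0$ uniformly as well, this subsequence is Cauchy in $(\DD(X),\norm)$; by completeness its limit lies in $\DD(X)$ and must coincide with $f$, so $f \in \DD(X)$ with $f' = 0$. Lemma~\ref{zero-derivative} then forces $f$ to be constant, and $f(z_0) = 0$ makes $f \equiv 0$, contradicting $\lv f\rv_X = 1$.

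Granted the first inequality, the second is routine. Given $f \in \DD(X)$, I would set $g = f - f(z_0)1 \in \Mone$, so that $g' = f'$. Applying Theorem~\ref{D1-complete} at the single point $z_0$ gives $\lv g(w)\rv = \lv g(z_0) - g(w)\rv \le A_{z_0}(\lv g\rv_X + \lv g'\rv_X)\lv z_0 - w\rv$, and the first part bounds $\lv g\rv_X$ by $C_1\lv g'\rv_X$, whence $\lv g(w)\rv \le A_{z_0}(C_1+1)\lv g'\rv_X\lv z_0-w\rv$. Taking $C_2 = A_{z_0}(C_1+1)$ and recalling that $g(w) = f(w) - f(z_0)$ and $g' = f'$ yields~(\ref{term-eliminated}).

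The hard part will be the first step: manufacturing a genuine limit function in $\DD(X)$. The two delicate points are that Theorem~\ref{D1-complete} only delivers \emph{pointwise} equicontinuity (the constants $A_z$ need not be bounded in $z$), so I must upgrade this to uniform equicontinuity via the compactness of $X$ before invoking Arzel\`a--Ascoli; and that the membership of the limit in $\DD(X)$ with vanishing derivative rests essentially on the completeness hypothesis, which is precisely what allows the uniformly convergent subsequence, together with $f_n' \to 0$, to be Cauchy in the stronger norm $\norm$. Connectedness enters only at the very end, through Lemma~\ref{zero-derivative}, to pass from $f' = 0$ to $f$ constant.
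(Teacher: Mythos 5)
Your proposal is correct and follows essentially the same route as the paper's own proof: the same contradiction setup with a normalized sequence $(f_n)$ in $\Mone$ having $\lv f_n\rv_X=1$ and $\lv f_n'\rv_X\to 0$, the same use of Theorem~\ref{D1-complete} to get equicontinuity, Ascoli's theorem to extract a uniformly convergent subsequence, completeness to identify the limit as an element of $\DD(X)$ with zero derivative, and Lemma~\ref{zero-derivative} to reach the contradiction. Your deduction of the second inequality, with $C_2 = A_{z_0}(1+C_1)$, is also exactly the paper's.
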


\begin{proof}   We shall
first prove the existence of the constant $C_1$.

Assume towards a contradiction that there is a sequence $(f_n)\in\Mone$  such that $\lv f_n\rv_X =1$ for each $n\in\N $, but such that  $\lv f'_n\rv_X \to 0$ as $n\to\infty$.  We can suppose that $\lv f'_n\rv_X \leq 1\,\;(n\in\N)$.  Set $\mathcal S = \{f_n: n\in\N\}$.

Let $z\in X$. By Theorem \ref{D1-complete}, there is a constant $C_z>0$ such that
\[
\lv f(z)- f(w)\rv \leq C_z(\lv f\rv_X + \lv f'\rv_X)\lv z-w\rv
\leq 2C_z \lv w-z\rv
\quad (f\in  \mathcal S, \,w\in X)\,.
\]

We {\it claim\/} that $\mathcal S$ is an equicontinuous family at $z$.  Indeed, take $\varepsilon >0$, and let $U =
\{w \in X : \lv w-z\rv <  \varepsilon/2C_z\}$, so that $U$ is a neighbourhood of $z  \in X$. For each $w \in U$, we have
$\lv f(w)- f(z)\rv < \varepsilon$ for each $f\in \mathcal S$, giving the claim.  Thus $\mathcal S$ is   equicontinuous on $X$.  Certainly
$\mathcal S$ is bounded in $(C(X), \lv \,\cdot\,\rv_X)$.

By Ascoli's theorem \cite[Theorem A.1.10]{Dales}, $\mathcal S$  is relatively compact in $(C(X), \lv \,\cdot\,\rv_X)$.  By passing to a subsequence, we may suppose that there exists $f\in C(X)$ such that $\lv f_n -f\rv_X \to 0$ as $n\to\infty$. Clearly we have $f(z_0) = 0$ and $\lv f \rv_X =1$.  We know that $\lv f'_n \rv_X \to 0$ as $n\to\infty$, and so $(f_n)$ is a Cauchy sequence in  $(D^{(1)}(X), \norm)$. Since  $(D^{(1)}(X), \norm)$ is complete, $(f_n)$ is convergent in this space. Clearly $\lim_{n\to \infty}f_n = f $ in $D^{(1)}(X)$, and so $f' =0$.

By Lemma \ref{zero-derivative}, $f$ is a constant. But $f(z_0) = 0$, and so $f=0$, a contradiction of the fact  that $\lv f \rv_X =1$.

This proves the existence of the desired constant $C_1$.

We now set $C_2 = A_{z_0}(1+C_1)$, where $A_{z_0}$ is the constant from (\ref{D1-condition}).
Equation (\ref{term-eliminated}) now follows. \end{proof}

\smallskip

Theorem \ref{term-eliminated-thm} does not hold in the absence of  either of the hypotheses that $X$ be connected or
that $(D^{(1)}(X), \norm)$ be complete.
\smallskip
The following corollary is now immediate.
\begin{corollary}
\label{term-eliminated-cor}
Let $X$ be a connected, compact plane set. Then $(D^{(1)}(X), \norm)$ is complete if and only if, for each $z \in X$, there exists $B_z>0$ such that,
for all $f\in\DD(X)$  and all $w \in X$, we have
\begin{equation}
\lv f(z) - f(w)\rv \leq B_z\lv f'\rv_X\lv z-w\rv\,.
 \end{equation}
\lift\QED
\end{corollary}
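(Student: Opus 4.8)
The plan is to derive Corollary~\ref{term-eliminated-cor} directly from Theorem~\ref{D1-complete} and Theorem~\ref{term-eliminated-thm}, treating the two implications separately. The corollary asserts an equivalence, so I would prove each direction in turn, exploiting the fact that almost all the work has already been done in the two theorems just established.

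For the forward implication, suppose that $(\DD(X),\norm)$ is complete. The point $z_0$ in Theorem~\ref{term-eliminated-thm} is arbitrary, so I would simply apply that theorem once for each $z \in X$, playing the role of $z_0$. Theorem~\ref{term-eliminated-thm} then yields, for each such $z$, a constant $C_2 > 0$ (depending on $z$) such that $\lv f(z)-f(w)\rv \leq C_2 \lv f'\rv_X \lv z-w\rv$ for all $f \in \DD(X)$ and all $w \in X$. Setting $B_z = C_2$ gives exactly the desired inequality, so this direction is essentially an immediate quotation of the preceding theorem.

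For the reverse implication, suppose that for each $z \in X$ there exists $B_z > 0$ with $\lv f(z)-f(w)\rv \leq B_z \lv f'\rv_X \lv z-w\rv$ for all $f \in \DD(X)$ and $w \in X$. I would deduce the completeness of $(\DD(X),\norm)$ by verifying condition~(\ref{D1-condition}) of Theorem~\ref{D1-complete}. Since $\lv f'\rv_X \leq \lv f\rv_X + \lv f'\rv_X = \lV f\rV$, the hypothesised inequality gives $\lv f(z)-f(w)\rv \leq B_z \lV f\rV \lv z-w\rv$, which is precisely~(\ref{D1-condition}) with $A_z = B_z$. Theorem~\ref{D1-complete} then delivers completeness.

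No step here presents a genuine obstacle; the only thing to be careful about is the logical packaging. The substance of the corollary is that the $\lv f\rv_X$ term in~(\ref{D1-condition}) may be dropped when $X$ is connected, and all the real content of that improvement resides in Theorem~\ref{term-eliminated-thm}, whose proof used Lemma~\ref{zero-derivative} and the connectedness of $X$. The corollary merely repackages this as an \lq if and only if', so the proof amounts to observing that one direction is a restatement of Theorem~\ref{term-eliminated-thm} (with $z_0$ ranging over all of $X$) and the other is the trivial observation that $\lv f'\rv_X \leq \lV f\rV$, feeding back into Theorem~\ref{D1-complete}. This is exactly why the excerpt marks the corollary as \lq now immediate'.
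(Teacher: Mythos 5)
Your proposal is correct and is precisely the argument the paper intends when it declares the corollary ``now immediate'': the forward direction is Theorem~\ref{term-eliminated-thm} applied with $z_0 = z$ for each $z \in X$, and the reverse direction feeds the trivial estimate $\lv f'\rv_X \leq \lV f \rV$ into the criterion of Theorem~\ref{D1-complete}. Nothing further is needed.
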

\smallskip
Let $X$ be a polynomially convex, geodesically bounded, compact plane set. Then $X$ is connected, and we know that $P_0(X)$ is dense in $\DDXN$. From this we immediately obtain the following further corollary.
\begin{corollary}
\label{poly-term-eliminated}
Let $X$ be a polynomially convex, geodesically bounded, compact plane set. Then $(D^{(1)}(X), \norm)$ is complete if and only if, for each $z \in X$, there exists $B_z>0$ such that,
for all $p\in P_0(X)$  and all $w \in X$, we have
\begin{equation}
\label{poly-condition}
\lv p(z) - p(w)\rv \leq B_z\lv p'\rv_X\lv z-w\rv\,.
 \end{equation}
 \lift\QED
\end{corollary}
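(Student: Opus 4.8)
The plan is to deduce this from Corollary~\ref{term-eliminated-cor} together with the density of $P_0(X)$ in $\DDXN$. First I would observe that, since $X$ is polynomially convex and geodesically bounded, $X$ is connected, so Corollary~\ref{term-eliminated-cor} applies: the completeness of $\DDXN$ is equivalent to the assertion that, for each $z \in X$, there is a constant $B_z > 0$ with $|f(z)-f(w)| \leq B_z |f'|_X |z-w|$ for all $f \in \DD(X)$ and all $w \in X$. It therefore suffices to show that this condition, quantified over all $f \in \DD(X)$, is equivalent to the apparently weaker condition of the corollary, in which $f$ is allowed to range only over $P_0(X)$.

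One implication is immediate, since $P_0(X) \subseteq \DD(X)$, so the condition for all $f \in \DD(X)$ trivially restricts to one for polynomials (with the same constants $B_z$). For the converse I would fix $z \in X$, assume that the polynomial inequality holds with constant $B_z$, and then use that $P_0(X)$ is dense in $\DDXN$. Given $f \in \DD(X)$ and $\varepsilon > 0$, I would choose $p \in P_0(X)$ with $\|f - p\| < \varepsilon$, so that $|f-p|_X < \varepsilon$ and $|f'-p'|_X < \varepsilon$ simultaneously. For a fixed $w \in X$, a three-term triangle-inequality estimate then gives
\[
|f(z)-f(w)| \leq 2|f-p|_X + |p(z)-p(w)| \leq 2\varepsilon + B_z\bigl(|f'|_X + \varepsilon\bigr)|z-w|\,.
\]
Since $z$ and $w$ are held fixed throughout, letting $\varepsilon \to 0$ yields $|f(z)-f(w)| \leq B_z |f'|_X |z-w|$, retaining the \emph{same} constant $B_z$. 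Thus the two conditions coincide, and the corollary follows from Corollary~\ref{term-eliminated-cor}.

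There is no genuine obstacle in this argument; the only point requiring a little care is the limiting step. One must note that $|z-w|$ is a fixed quantity (it is emphatically \emph{not} tending to zero), so that both error terms $2\varepsilon$ and $B_z\varepsilon|z-w|$ vanish as $\varepsilon \to 0$, leaving the clean inequality with the original constant. In particular, passing from polynomials to all of $\DD(X)$ requires no enlargement of $B_z$, which is exactly what makes the reduction to the polynomial condition a genuine strengthening rather than merely a qualitative restatement.
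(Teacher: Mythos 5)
Your proposal is correct and is essentially the paper's own argument: the paper likewise derives the corollary immediately from Corollary~\ref{term-eliminated-cor} (after noting that polynomial convexity plus geodesic boundedness forces $X$ to be connected) together with the previously established density of $P_0(X)$ in $(\DD(X),\norm)$. Your $\varepsilon$-argument simply spells out the approximation step that the paper leaves implicit in the word \lq immediately'.
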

\smallskip
We conclude this section with some polynomial approximation results. Of course, if we knew that $X$ were geodesically bounded whenever $X$ is a compact plane set for which $\DDXN$ is complete, then (ii) below would be immediate.

\begin{corollary} Let $X$ be a connected,  polynomially convex, compact plane set  for which $(D^{(1)}(X), \norm)$ is complete.\smallskip

{\rm (i)} Take $z_0\in X$. Then the map
\[
f\mapsto f'\,,\quad \Mone \to P(X)\,,
\]
is a  bicontinuous linear isomorphism.\smallskip

 {\rm (ii)} The algebra $P_0(X) $ is dense in $(D^{(1)}(X), \norm)$.\end{corollary}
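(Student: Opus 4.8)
The plan is to realise part~(i) by showing that the differentiation map $T\colon f\mapsto f'$ carries the maximal ideal $\Mone$ bijectively and bicontinuously onto $P(X)$, and then to read off part~(ii) from the same approximation scheme. First I would record the structural facts that make $T$ well behaved. Since $\eps_{z_0}$ is a continuous character, $\Mone=\ker\eps_{z_0}$ is a closed subspace of the Banach space $\DDX$, hence itself a Banach space. The map $T$ is linear and satisfies $|f'|_X\le\|f\|$, so it is continuous. To see that $T$ actually lands in $P(X)$, note that $\DDX\subseteq A(X)$: for $f\in\DDX$ the derivative $f'$ is continuous on $X$ and agrees on $\Tint X$ with the ordinary holomorphic derivative of $f|_{\Tint X}$, so $f'\in A(X)$; because $X$ is polynomially convex, Mergelyan's theorem gives $A(X)=P(X)$, whence $f'\in P(X)$. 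Finally, $T$ is injective: if $f\in\Mone$ and $f'=0$, then Lemma~\ref{zero-derivative} (applicable since $X$ is connected and $\DDX$ is complete) forces $f$ to be constant, and $f(z_0)=0$ gives $f=0$.

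The heart of the argument is surjectivity together with the lower bound, both of which exploit completeness and Theorem~\ref{term-eliminated-thm}. Let $C_1$ be the constant of that theorem, so that $|g|_X\le C_1|g'|_X$ for every $g\in\Mone$. Given $h\in P(X)$, choose polynomials $p_n$ with $|p_n-h|_X\to 0$, and let $q_n\in P_0(X)$ be the polynomial antiderivative of $p_n$ normalised by $q_n(z_0)=0$, so that $q_n\in\Mone$ and $q_n'=p_n$. Applying the estimate to $q_n-q_m$ gives $\|q_n-q_m\|=|q_n-q_m|_X+|p_n-p_m|_X\le(1+C_1)|p_n-p_m|_X$, so $(q_n)$ is Cauchy in $\DDXN$; by completeness it converges to some $f\in\DDX$, and then $f(z_0)=0$ and $f'=\lim p_n=h$, so $Tf=h$. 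Thus $T$ is a bijection; since in addition $\|f\|\le(1+C_1)|f'|_X$ for $f\in\Mone$, the inverse of $T$ is bounded, and $T$ is the desired bicontinuous isomorphism.

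For part~(ii) I would run the same antiderivative construction on an arbitrary $f\in\DDX$. As above $f'\in P(X)$, so there are polynomials $p_n$ with $|p_n-f'|_X\to 0$; let $q_n\in P_0(X)$ be the antiderivative of $p_n$ with $q_n(z_0)=f(z_0)$. Then $f-q_n\in\Mone$ and $(f-q_n)'=f'-p_n\to 0$ uniformly, so Theorem~\ref{term-eliminated-thm} yields $|f-q_n|_X\le C_1|f'-p_n|_X\to 0$; hence $\|f-q_n\|\to 0$ and $f$ lies in the $\norm$-closure of $P_0(X)$.

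I expect the only genuinely delicate point to be the well-definedness of $T$ into $P(X)$: one must combine $\DDX\subseteq A(X)$ with the identification $A(X)=P(X)$, and it is precisely here that polynomial convexity is used. Everything else is a direct application of Theorem~\ref{term-eliminated-thm}, which simultaneously supplies the lower bound giving bicontinuity and the Cauchy estimate driving both the surjectivity of $T$ and the density in part~(ii), together with the completeness of $\DDXN$.
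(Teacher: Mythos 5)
Your proposal is correct and follows essentially the same route as the paper: injectivity from Lemma \ref{zero-derivative}, surjectivity and bicontinuity from Theorem \ref{term-eliminated-thm} applied to polynomial antiderivatives together with the completeness of $\DDXN$, and part (ii) by the same antiderivative scheme using $f' \in A(X) = P(X)$ via Mergelyan's theorem. The only differences are expository (you verify explicitly that $T$ maps into $P(X)$, and you inline the $C_1$-estimate in part (ii) where the paper simply invokes the continuity of $T^{-1}$).
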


\begin{proof} (i) The map
 \[
T: f\mapsto f',\quad \Mone \to P(X)\,,
\]
is    linear, and it  is clearly continuous. By Lemma \ref{zero-derivative}
(or Theorem \ref{term-eliminated-thm}), $T$ is injective.

Let $g \in P(X)$. Then there is a sequence $(q_n)$ of polynomials with $\lv q_n - g\rv_X \to 0$ as $n\to \infty$.  For each $n\in\N$, define a polynomial $p_n$ by requiring that $p_n' =q_n$ and $p_n(z_0) = 0$. By Theorem \ref{term-eliminated-thm}, $(p_n)$ is a Cauchy sequence in $(C(X), \lv \,\cdot\,\rv_X)$, and so there exists $f\in C(X)$ such that $\lv p_n - f\rv_X\to 0$ as $n\to \infty$.   Clearly  $(p_n)$ is also a Cauchy sequence in $(D^{(1)}(X), \norm)$, and so $f \in \DD(X)$ and $p_n \to f$ in $(D^{(1)}(X), \norm)$. Clearly $f \in \Mone$,
and $f' =Tf =g$. This shows that $T$ is a surjection.

The continuity of $T^{-1}$ is now immediate from either Theorem \ref{term-eliminated-thm} or the open mapping theorem.
\smallskip

(ii)  Let $f\in D^{(1)}(X)$. Then $f' \in A(X) = P(X)$, and so there is a sequence $(q_n)$ of polynomials such that $\lv q_n - f'\rv_X \to 0$ as $n\to \infty$.  Let $(p_n)$ be a sequence of polynomials such that  $p_n' =q_n$ and $p_n(z_0) = f(z_0)$  for $n\in\N$.
Since $T^{-1}$ is continuous, we have
\[p_n - f = T^{-1}(q_n-f') \to 0\]
 in $(D^{(1)}(X), \norm)$ as $n \to \infty$, giving (ii).
 \end{proof}
\smallskip

Now suppose that $X$ is  a connected, compact plane set (which need not be polynomially convex) such that $(D^{(1)}(X), \norm)$ is complete. Take $z_0\in X$, and define $\Mone$ and $T:\DD(X)\to A(X)$ as above, i.e.,
\[
T(f)=f'\quad(f \in \Mone)\,.
\]
Then the  above argument shows that $T(\Mone)$ is a closed linear subspace of $A(X)$.  For example, in the case where $X= \T$, the range of the map $T$ is the space of functions $g\in C(\T)$ such that $\int_\T g(z) {\dd}z =0$.

Let $X$ be a polynomially convex, perfect, compact plane set such that the normed algebra $(D^{(1)}(X), \norm)$ is complete.
Then $X$ has only finitely many components, and the above polynomial approximation result holds on each component separately. It follows easily that $O(X)$ is dense in $(\DD(X),\norm)$. However, as we observed earlier, for polynomially convex, perfect, compact plane sets $X$, $P_0(X)$ and $O(X)$ have the same closure in $(\DD(X),\norm)$. Thus we obtain the following further corollary.

\begin{corollary} Let $X$ be a polynomially convex, perfect, compact plane set for which $(D^{(1)}(X), \norm)$ is complete.
Then $P_0(X)$ is dense in $(D^{(1)}(X), \norm)$.\QED
\end{corollary}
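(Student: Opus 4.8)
The plan is to reduce the general polynomially convex case to the connected case already handled in the preceding corollary, exploiting the fact that completeness of $\DDXN$ forces $X$ to have only finitely many components (as noted after Theorem~\ref{D1-complete}). First I would write $X = X_1 \cup \dots \cup X_m$ as the disjoint union of its finitely many connected components. Since these components are disjoint compact sets, each is relatively open and closed in $X$, and the algebra $\DD(X)$ decomposes as the direct product $\prod_{j=1}^m \DD(X_j)$: a function on $X$ is continuously differentiable precisely when its restriction to each component is, and the norm behaves coordinatewise in the obvious way. In particular, completeness of $\DDXN$ transfers to completeness of each $(\DD(X_j),\norm)$.

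The next step is to check that polynomial convexity is inherited by each component. This is where I would need to be slightly careful. For a polynomially convex compact plane set, each connected component is itself polynomially convex; one standard way to see this is via the fact that $\widehat{X_j} \subseteq \widehat{X} = X$, combined with the observation that $\widehat{X_j}$ is connected and contains $X_j$, so $\widehat{X_j}$ must lie inside the single component $X_j$, giving $\widehat{X_j} = X_j$. With this in hand, each $X_j$ is a connected, polynomially convex, compact plane set for which $(\DD(X_j),\norm)$ is complete, so the preceding corollary applies and yields that $P_0(X_j)$ is dense in $(\DD(X_j),\norm)$.

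From density on each component I would assemble density on $X$. Given $f \in \DD(X)$ and $\varepsilon > 0$, approximate $f|_{X_j}$ within $\varepsilon$ by a polynomial $p_j$ on each component. The components are disjoint compacta, so by a Runge-type / Shilov idempotent argument there is a single rational function (indeed, since $X$ is polynomially convex, a polynomial) that is uniformly close to $p_j$ on $X_j$ for every $j$ simultaneously in the $\norm$-norm; this realizes the statement, recorded in the excerpt, that ``the above polynomial approximation result holds on each component separately\dots\ it follows easily that $O(X)$ is dense in $(\DD(X),\norm)$.'' The cleanest route is to first conclude that $O(X)$ is dense, patching the analytic functions that agree with $p_j$ near $X_j$ using the idempotents in $O(X)$ that separate the components. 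Finally I would invoke the earlier remark that, for polynomially convex, perfect, compact plane sets, the $\norm$-closures of $P_0(X)$ and $O(X)$ coincide (a consequence of Runge's theorem), so density of $O(X)$ upgrades to density of $P_0(X)$, completing the proof.

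The main obstacle I anticipate is the patching step: passing from per-component polynomial approximation to a single global polynomial approximant in the stronger $\norm$-norm rather than merely the uniform norm. The derivatives must also be controlled, so the idempotent/Runge functions used to glue the pieces must be analytic on a neighbourhood of $X$ (hence in $O(X)$, with controlled derivatives near each component) rather than merely continuous; this is exactly why the intermediate reduction to density of $O(X)$, followed by the $P_0(X)$--$O(X)$ equivalence, is the right order of operations, and it is why the finiteness of the number of components is essential.
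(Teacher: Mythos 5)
Your proposal is correct and takes essentially the same route as the paper's own proof: completeness forces finitely many components, the preceding corollary gives polynomial approximation on each component separately, patching the per-component polynomials yields density of $O(X)$ in $(\DD(X),\norm)$, and the earlier Runge-theorem remark that the $\norm$-closures of $P_0(X)$ and $O(X)$ coincide for polynomially convex $X$ finishes the argument. The details you supply --- the product decomposition of $\DD(X)$, the polynomial convexity of each component via $\widehat{X_j}\subseteq\widehat{X}=X$, and the care with derivatives in the gluing step --- are precisely what the paper compresses into the phrase \lq it follows easily'.
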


 \section{Sufficient conditions  for the incompleteness of  $\DDXN$}
 \noindent
Let $X$ be a connected, compact plane set $X$.
We shall identify a variety of geometric conditions on $X$ which are sufficient for $\DDXN$ to be incomplete.
We begin by defining a function
$\QX:X \rightarrow [0,\infty]$ in order to make applications of  Theorem \ref{term-eliminated-thm} more efficient.
For $z \in X$, we set
\[
\QX(z) = \sup \left\{\frac{|f(z)-f(w)|}{|z-w|}:w \in X \setminus \{z\},\, f \in \DD(X),\, |f'|_X \leq 1\,\right\}\,.
\]
Thus, for $z_0 \in X$, condition (\ref{term-eliminated}) holds at $z_0$ if and only if $\QX(z_0)<\infty$, in which case we may take $C_2=\QX(z_0)$.

We may now rephrase Corollary \ref{term-eliminated-cor} in terms of $\QX$ as follows.

\begin{theorem}
Let $X$ be a connected, compact plane set. Then $\DDXN$  is complete if and only if, for all $z \in X$, we have $\QX(z)<\infty$.\QED
\end{theorem}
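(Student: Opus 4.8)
The plan is to read this theorem directly off Corollary \ref{term-eliminated-cor}, by checking that finiteness of $\QX$ is precisely the content of condition (\ref{term-eliminated}). Corollary \ref{term-eliminated-cor} already tells us that, for connected $X$, completeness of $\DDXN$ is equivalent to the existence, for each $z \in X$, of a constant $B_z > 0$ with $\lv f(z) - f(w)\rv \leq B_z \lv f'\rv_X \lv z - w\rv$ for all $f \in \DD(X)$ and all $w \in X$. So it suffices to establish the pointwise equivalence: for a fixed $z \in X$, such a $B_z$ exists if and only if $\QX(z) < \infty$. Granting this, the theorem follows at once.

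One direction is immediate. If such a $B_z$ exists, then restricting the inequality to those $f \in \DD(X)$ with $\lv f'\rv_X \leq 1$ gives $\lv f(z) - f(w)\rv \leq B_z \lv z - w\rv$, whence $\lv f(z) - f(w)\rv / \lv z - w\rv \leq B_z$ for every $w \neq z$; taking the supremum defining $\QX(z)$ yields $\QX(z) \leq B_z < \infty$.

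For the converse I would exploit the fact that the inequality is homogeneous of degree one in $f$, and argue by scaling. Suppose $\QX(z) < \infty$ and set $B_z = \QX(z)$. For $f$ with $\lv f'\rv_X > 0$, replacing $f$ by $f / \lv f'\rv_X$ brings us into the class over which the supremum defining $\QX(z)$ is taken, giving $\lv f(z) - f(w)\rv \leq \QX(z) \lv f'\rv_X \lv z - w\rv$ immediately. The only point requiring a moment's care is the degenerate case $\lv f'\rv_X = 0$, where the inequality to be proved asserts $f(z) = f(w)$. For this I would apply the definition of $\QX(z)$ to $\lambda f$ for arbitrary $\lambda > 0$ (which still has derivative of uniform norm $0 \leq 1$), obtaining $\lambda \lv f(z) - f(w)\rv \leq \QX(z) \lv z - w\rv$; letting $\lambda \to \infty$ forces $f(z) = f(w)$, so both sides of the desired inequality vanish.

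I do not expect any genuine obstacle here: the argument is a routine scaling calculation, and the mild subtlety is only the degenerate case just discussed. It is worth noting that the pointwise equivalence itself uses neither connectedness nor completeness of $\DDXN$; those hypotheses enter solely through the invocation of Corollary \ref{term-eliminated-cor}.
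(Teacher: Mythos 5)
Your proposal is correct and is essentially identical to the paper's argument: the paper presents this theorem as an immediate rephrasing of Corollary \ref{term-eliminated-cor}, having already observed that, for each $z_0 \in X$, condition (\ref{term-eliminated}) holds at $z_0$ if and only if $\QX(z_0)<\infty$ (in which case one may take the constant to be $\QX(z_0)$). Your scaling argument, including the degenerate case $\lv f'\rv_X = 0$, simply makes explicit the routine verification that the paper leaves to the reader.
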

\smallskip
Let $X$ be a connected, compact plane set.
By Proposition \ref{pr-sufficient}, $\DDXN$ is complete whenever $X$ is pointwise regular. We shall now try to establish the converse of this result; we have no counter-example to the possibility that the converse always holds.
The following partial result eliminates a large class of compact plane sets, including all rectifiable Jordan arcs, as possible counter-examples.

\begin{theorem}
Let $X$ be a polynomially convex, rectifiably connected, compact plane set with empty interior. Suppose that $X$ is not pointwise regular. Then $(\DD(X),\norm)$ is incomplete.
\end{theorem}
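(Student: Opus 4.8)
The plan is to exhibit a point $z_0\in X$ at which the completeness criterion fails, namely to show $\QX(z_0)=\infty$; incompleteness then follows from the characterization of completeness in terms of $\QX$ (equivalently, condition (\ref{term-eliminated}) of Corollary \ref{term-eliminated-cor} must fail at $z_0$). First I would use the failure of pointwise regularity to fix a point $z_0\in X$ at which $X$ is not regular. Since $X$ is rectifiably connected, $\delta(z_0,\cdot)$ is finite and the infimum defining it is attained by a Jordan path, so regularity at $z_0$ is exactly the condition $\delta(z_0,w)\le k_{z_0}\,|z_0-w|$ for all $w$. Its failure produces points $w_n\in X\setminus\{z_0\}$ with $\delta(z_0,w_n)/|z_0-w_n|\to\infty$. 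For each $n$ I take a geodesic $\sigma_n\colon[0,L_n]\to X$ from $z_0$ to $w_n$, parametrized by arc length, where $L_n=\delta(z_0,w_n)$; this is an admissible Jordan path, hence a homeomorphism onto its image $\Jim{\sigma_n}$, with $|\sigma_n'(s)|=1$ for almost every $s$.

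The core of the argument is to manufacture, for each $n$, a polynomial $p_n$ with $|p_n'|_X$ bounded independently of $n$ but with $|p_n(z_0)-p_n(w_n)|$ comparable to $L_n$. The guiding observation is that $\int_{\sigma_n}p_n'=p_n(w_n)-p_n(z_0)$, and that, for a fixed bound on $|p_n'|$, this integral is largest when $p_n'$ agrees with the conjugate unit tangent $\overline{\sigma_n'}$ along $\sigma_n$, since then the integrand equals $\overline{\sigma_n'}\,\sigma_n'=|\sigma_n'|^2=1$ and the integral equals $L_n$. Concretely I would approximate $\overline{\sigma_n'}$ in $L^1[0,L_n]$ by a continuous function $h$ of the arc-length parameter with $|h|\le 1$ and $\int_0^{L_n}|h-\overline{\sigma_n'}|\,{\rm d}s<L_n/4$; transport it to a continuous function $\tilde g=h\circ\sigma_n^{-1}$ on $\Jim{\sigma_n}$ (using that $\sigma_n$ is a homeomorphism); extend $\tilde g$ by Tietze's theorem to $g\in C(X)$ with $|g|_X\le 1$; and finally, invoking $P(X)=C(X)$ (Lavrentiev's theorem, valid since $X$ is polynomially convex with empty interior), choose a polynomial $q$ with $|q-g|_X\le 1/4$ and let $p_n$ be its antiderivative normalized by $p_n(z_0)=0$. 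Then $|p_n'|_X=|q|_X\le 5/4$, while
\[
|p_n(w_n)-p_n(z_0)|=\Bigl|\int_{\sigma_n}q\Bigr|\ \ge\ \frac{3L_n}{4}-\frac{L_n}{4}\ =\ \frac{L_n}{2}\,,
\]
after bounding the two approximation errors by $\int_0^{L_n}|h-\overline{\sigma_n'}|\,{\rm d}s$ and $|q-g|_X\,|\sigma_n|$, respectively.

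Feeding the normalized functions $p_n/|p_n'|_X$ into the definition of $\QX$ gives
\[
\QX(z_0)\ \ge\ \frac{|p_n(z_0)-p_n(w_n)|}{|p_n'|_X\,|z_0-w_n|}\ \ge\ \frac{2}{5}\cdot\frac{L_n}{|z_0-w_n|}\ \longrightarrow\ \infty\,,
\]
whence $\QX(z_0)=\infty$ and $\DD(X)$ is incomplete. I emphasize that taking the $L^1$-error proportional to $L_n$ while keeping $|q-g|_X$ a fixed constant is precisely what keeps $|p_n'|_X$ uniformly bounded, so the estimate survives irrespective of whether $L_n\to 0$; only the ratio $L_n/|z_0-w_n|$ need blow up.

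The main obstacle is exactly this test-function construction. The geodesic tangent $\sigma_n'$ is merely a bounded measurable unit field along $\sigma_n$ and need not be continuous, so it cannot be matched uniformly; the matching must be carried out in the integrated ($L^1$) sense and then realized by a genuine polynomial whose derivative has controlled uniform norm. The remaining ingredients---attainment and injectivity of geodesics in a rectifiably connected set, the homeomorphism property of a Jordan path, $P(X)=C(X)$, and the passage from $\QX(z_0)=\infty$ to incompleteness---are all furnished directly by results already established above.
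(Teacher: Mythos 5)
Your proposal is correct and takes essentially the same route as the paper's own proof: both fix a point $z_0$ at which regularity fails, show $Q_X(z_0)=\infty$ by producing, for each bad point $w_n$, a continuous function of modulus at most (essentially) $1$ whose integral along the geodesic is comparable to the geodesic length (via Tietze extension), replace it by a polynomial using $P(X)=C(X)$ for polynomially convex sets with empty interior, and pass to the polynomial antiderivative normalized at $z_0$. Your $L^1$-approximation of the conjugate unit tangent $\overline{\sigma_n'}$ is just an explicit rendering of the step the paper leaves as \lq easy to see, using the definition of arc length and Tietze's extension theorem'.
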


\begin{proof}
Since $X$ is not pointwise regular, there exists $z_0 \in X$ such that $X$ is not regular at $z_0$.
We shall show that $\QX(z_0)=\infty$.

Let $C>0$. Then there exists $w \in X$ with $\gd(z_0,w)>C|z_0-w|$.
Let $\gamma$ be a geodesic from $z_0$ to $w$ in $X$.
It is easy to see, using the definition of arc length and Tietze's extension theorem, that there is a function
$f \in C(X)$ with $|f|_X<1$ and such that
\[
\left| \int_\gamma f \right| > C|z_0-w|\,.
\]
 As the polynomials are dense in $C(X)$, we may suppose that $f$ is, in fact, a polynomial.
 Let $p$ be the unique polynomial satisfying $p(z_0)=0$ and such that $p'=f$.
 Then $|p(z_0)-p(w)| > C |z_0-w|$, and $|p'|_X <1$. This shows that $\QX(z_0)>C$. As the choice of $C>0$ was arbitrary, $\QX(z_0)=\infty$, as claimed.

Thus $\DDXN$ is incomplete.
 \end{proof}\smallskip

Although the preceding proof does not apply, nevertheless it is true that the normed algebra $(\DD(J),\Jnorm)$ is incomplete whenever
$J$ is a non-rectifiable Jordan arc. The proof requires two lemmas.

\begin{lemma}Let $z_0,w_0 \in \C$ with $z_0 \neq w_0$, let $\gamma$ be a Jordan path from $z_0$ to $w_0$ in $\C$, and let $B \in \Rplus$.
Then there exists $f \in \DD(\Jim{\gamma})$ with $0<|f'|_{\Jim{\gamma}} \leq 3$, with
$f'(z_0)=f'(w_0)=0$, with $f(z_0)=B$, with $f(w_0)\in \Rplus$, and with
\[
f(w_0) > f(z_0)+\frac12|w_0-z_0|\,.
\]
\end{lemma}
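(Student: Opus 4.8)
The plan is to build $f$ as a restriction of functions that are holomorphic on neighbourhoods of the relevant pieces of the arc. The key enabling fact is that any function holomorphic near a set $E$ restricts to a member of $\DD(E)$ whose derivative is the ambient complex derivative; on a possibly wild arc this is essentially the only controllable source of complex-differentiable functions, since the difference quotient of a holomorphic function equals its derivative irrespective of the direction of approach. After a preliminary rotation and translation I would assume $z_0=0$ and $w_0=L$, where $L=|w_0-z_0|>0$: the substitution $z\mapsto \e^{-\i\theta}(z-z_0)$ preserves $|f'|$, sends the endpoints to $0$ and $L$, and preserves the value conditions (in particular membership in $\Rplus$), so this reduction is harmless.

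The backbone of the construction is the affine (hence entire) function $f_0(z)=z+B$. It already satisfies every requirement except the vanishing of the derivative at the endpoints: indeed $f_0(0)=B$, $f_0(L)=B+L\in\Rplus$ with $f_0(L)-f_0(0)=L>\tfrac12 L$, and $|f_0'|\equiv 1$, so $0<|f_0'|\le 3$. Being entire, $f_0$ causes no trouble on the parts of $\Jim{\gamma}$ that wander far from the segment $[z_0,w_0]$ — exactly where a single global polynomial with a quadratic derivative would be useless, and the reason a purely entire construction fails. I would therefore modify $f_0$ only near the two endpoints. Fix radii $r_1,r_3$ with $r_1+r_3<L$, let $\tau_1$ be the first parameter at which $\gamma$ meets the circle of radius $r_1$ about $z_0$ and $\tau_2$ the last at which it meets the circle of radius $r_3$ about $w_0$ (so $\tau_1<\tau_2$, and the initial and terminal subarcs lie in the corresponding closed discs). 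On $\gamma([\tau_1,\tau_2])$ keep $f=f_0$. On $\gamma([0,\tau_1])$ replace $f_0$ by an entire $F_1$ with $F_1(z_0)=B$, $F_1'(z_0)=0$ and $F_1'(\gamma(\tau_1))=1$, absorbing the value mismatch at $\gamma(\tau_1)$ into an additive constant on the middle piece; the choice $F_1'(z)=(z-z_0)/(\gamma(\tau_1)-z_0)$ gives $|F_1'|\le 1$ on the disc of radius $r_1$, so this end produces no overshoot. Treat $\gamma([\tau_2,1])$ similarly with an entire $F_3$ having $F_3'(\gamma(\tau_2))=1$ and $F_3'(w_0)=0$. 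Continuity of value and derivative at the two join points, together with holomorphy of each piece, makes the glued $f$ continuously complex-differentiable on all of $\Jim{\gamma}$, with $f'$ vanishing at both endpoints and $|f'|\equiv 1$ on the middle piece, giving $0<|f'|_{\Jim{\gamma}}$.

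The genuinely delicate point — and the step I expect to be the main obstacle — is to keep $f(w_0)\in\Rplus$ with $f(w_0)>B+\tfrac12 L$ while enforcing $|f'|\le 3$. Since the value at $w_0$ is now fixed by telescoping the three pieces, a short computation with the naive monotone $F_3$ gives $f(w_0)=B+\tfrac12\bigl(\gamma(\tau_2)+w_0-z_0-\gamma(\tau_1)\bigr)$, whose imaginary part is $\tfrac12\Im\bigl(\gamma(\tau_2)-\gamma(\tau_1)\bigr)$ and is therefore only of size $O(r_1+r_3)$, because $\gamma(\tau_1)$ lies within $r_1$ of $z_0$ and $\gamma(\tau_2)$ within $r_3$ of $w_0$ while $w_0-z_0$ is real. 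I would use the free parameter available in a cubic derivative $F_3'$ to cancel this $O(r_1+r_3)$ imaginary discrepancy and land on a real value; the needed adjustment to $\int_{\gamma(\tau_2)}^{w_0}F_3'$ is of order $r_3$, so it inflates $|F_3'|$ by only a bounded amount, and taking $r_1,r_3$ small keeps $|F_3'|\le 3$ on the terminal disc while forcing the real part of $f(w_0)$ to remain within $O(r_1+r_3)$ of $B+L$, hence above $B+\tfrac12 L$. Collecting the three bounds then yields $|f'|_{\Jim{\gamma}}\le 3$ and completes the proof; the quantitative balancing of the endpoint flattening against the reality and size of $f(w_0)$ is where all the real work lies, the complex-differentiability at the joins and endpoints being automatic from the $C^1$-matching of holomorphic pieces.
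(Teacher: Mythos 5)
Your three-piece construction coincides, almost formula for formula, with the paper's own proof: the paper splits $\gamma$ at the first point $z_1$ with $|z_1-z_0|=\eta$ and the last point $w_1$ with $|w_1-w_0|=\eta$ (where $\eta=|z_0-w_0|/100$), keeps the derivative identically $1$ on the middle piece, and interpolates the derivative linearly down to $0$ on the two end pieces; your $F_1'(z)=(z-z_0)/(\gamma(\tau_1)-z_0)$ is exactly the paper's derivative on the first piece. The genuine gap is at the step you yourself flag as delicate, and your resolution of it fails. In the worst case (an arc leaving the first disc vertically and entering the last disc vertically from the opposite side) the imaginary discrepancy to be cancelled is $\tfrac12|\Im(\gamma(\tau_2)-\gamma(\tau_1))|=\tfrac12(r_1+r_3)$. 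The correction $h=F_3'-F_{3,\mathrm{naive}}'$ must be analytic on the terminal disc $\overline{D}(w_0,r_3)$ and vanish at both $w_0$ (endpoint condition) and $\gamma(\tau_2)$ (to preserve the $C^1$ match with the middle piece), and the Schwarz lemma gives $|\int_{[\gamma(\tau_2),w_0]}h|\le\tfrac12 r_3\max_{\overline{D}}|h|$; hence cancelling the discrepancy forces $\max|h|\ge(r_1+r_3)/r_3$, which already equals the entire available budget $3-1=2$ when $r_1=r_3$ (and is not attained, since equality in Schwarz would force $h$ to be a multiple of $z-w_0$, which does not vanish at $\gamma(\tau_2)$). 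For your polynomial correction the constants are far worse: $h=c(z-\gamma(\tau_2))(z-w_0)$ gives only $|\int h|\le\tfrac1{12}r_3\max_{\overline{D}}|h|$, and allowing a cubic improves this only to about $\tfrac18$, so $\max|h|\ge 4$ is needed and $|F_3'|$ is driven up to $5>3$ \emph{whatever} the radii. Most importantly, your escape route, ``taking $r_1,r_3$ small,'' cannot work even in principle: the discrepancy and the correction capacity both scale linearly in the radii, so the sup-norm inflation is scale-invariant. (A correction of this type can be pushed through, but only by splitting it between the two end discs, or taking $r_1\ll r_3$, and using near-extremal analytic corrections rather than cubics; none of this is in your proposal.)

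The paper sidesteps all of this with one line, which your premature normalization $w_0-z_0\in\Rplus$ hides from view: make no attempt to control the argument of the increment during the construction. Build your glued function $g$ with no correction term at all (so $|g'|\le 1$ everywhere, $g'$ vanishing at both endpoints), note that $|g(w_0)-g(z_0)|\ge|w_0-z_0|-\tfrac12(r_1+r_3)>\tfrac12|w_0-z_0|$, and then set $f=B+\e^{\i\theta}\bigl(g-g(z_0)\bigr)$ with $\theta\in\R$ chosen so that $\e^{\i\theta}\bigl(g(w_0)-g(z_0)\bigr)\in\Rplus$. Rotating the \emph{range} afterwards costs nothing: $|f'|=|g'|$, the derivative still vanishes at $z_0$ and $w_0$, $f(z_0)=B$, and $f(w_0)-f(z_0)=|g(w_0)-g(z_0)|$ is real and exceeds $\tfrac12|w_0-z_0|$, exactly as required. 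With that repair your argument becomes a complete and correct proof; as written, the reality step is a genuine gap.
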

\begin{proof}
Set $\eta=|z_0-w_0|/100$.
Let $z_1$ be the first point of $\gamma$ with $|z_1-z_0|=\eta$, and let $w_1$ be the last point of $\gamma$ with $|w_1-w_0|= \eta$. We may split $\gamma$ into three Jordan paths:
$\gamma_1$ from $z_0$ to $z_1$, $\gamma_2$ from $z_1$ to $w_1$, and $\gamma_3$ from
$w_1$ to $w_0$. Note that $|z-z_1|\leq 2\eta$ on $\gamma_1$, while
$|z-w_1| \leq 2 \eta$ on $\gamma_3$.

Define $g(z)=z-z_0$ for $z \in \gamma_2$. For $z$ in $\gamma_1$, define
\[g(z) = z-z_0 - \frac{(z-z_1)^2}{2(z_0-z_1)}\,.\]
For $z \in \gamma_3$, define
\[
g(z) = z - z_0 - \frac{(z-w_1)^2}{2(w_0-w_1)}\,.
\]
Finally, set $f(z) = B + \e^{\i \theta} (g(z) - g(z_0))$, where $\theta \in \R$ is chosen so that we have $\e^{\i \theta} (g(w_0) - g(z_0)) \in \R^+$.

It is then easy to check that $f$ has the desired properties.
\end{proof}

\begin{lemma}
Let $z_0,w_0 \in \C$, let $A>0$, and suppose that $\gamma$ is a Jordan path from $z_0$ to $w_0$ in $\C$ whose length {\rm (}which may be infinite{\rm )} is greater than $A$.
Then there exists $f \in \DD(\Jim{\gamma})$ with $0<|f'|_{\Jim{\gamma}} \leq 3$,
with $f'(z_0)=0$, with $f'(w_0)=0$, and with $|f(z_0)-f(w_0)|> A/2$.
\end{lemma}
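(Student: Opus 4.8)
The plan is to exploit the fact that the length of $\gamma$ is the supremum, over all partitions of its parameter interval, of the associated chord-sums, and then to build $f$ by gluing together copies of the function supplied by the preceding lemma on the pieces into which a suitable partition cuts $\gamma$. Concretely, write $\gamma:[a,b]\to\C$. Since $|\gamma| > A$, I would first choose a partition $a = t_0 < t_1 < \cdots < t_n = b$ with
\[
\sum_{k=1}^n |\gamma(t_k) - \gamma(t_{k-1})| > A\,;
\]
this is immediate from the definition of arc length (and trivially available when $|\gamma| = \infty$). Put $p_k = \gamma(t_k)$ and $\gamma_k = \gamma|_{[t_{k-1},t_k]}$, so that each $\gamma_k$ is a Jordan path from $p_{k-1}$ to $p_k$. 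Because $\gamma$ is injective, the points $p_0,\dots,p_n$ are distinct, consecutive pieces $\Jim{\gamma_k}$ and $\Jim{\gamma_{k+1}}$ meet only at the single point $p_k$, and non-consecutive pieces are disjoint.

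Next I would apply the preceding lemma to each $\gamma_k$ in turn, choosing the additive constant $B$ so that the values match up at the cut points. Setting $B_1 = 0$, for $k = 1,\dots,n$ I apply the lemma to $\gamma_k$ with $B = B_k$ to obtain $f_k \in \DD(\Jim{\gamma_k})$ with $0 < |f_k'|_{\Jim{\gamma_k}} \leq 3$, with $f_k'(p_{k-1}) = f_k'(p_k) = 0$, with $f_k(p_{k-1}) = B_k$, and with $f_k(p_k) > B_k + \tfrac12|p_k - p_{k-1}|$; then I set $B_{k+1} = f_k(p_k)$. Defining $f$ on $\Jim{\gamma} = \bigcup_{k=1}^n \Jim{\gamma_k}$ by $f|_{\Jim{\gamma_k}} = f_k$ is unambiguous, since consecutive pieces agree at the shared point $p_k$ (both values equal $B_{k+1}$), and continuity of $f$ then follows from the pasting lemma.

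The hard part will be verifying that $f$ is continuously differentiable on all of $\Jim{\gamma}$, and this is precisely where the vanishing of each $f_k'$ at its endpoints is essential. On the interior of each $\Jim{\gamma_k}$ there is nothing to check. At a cut point $p_k$, since $\gamma$ is a homeomorphism onto its image, every point of $\Jim{\gamma}$ sufficiently close to $p_k$ lies on $\Jim{\gamma_k} \cup \Jim{\gamma_{k+1}}$; for such points the difference quotient $(f(z)-f(p_k))/(z-p_k)$ tends to $f_k'(p_k) = 0$ from the $\gamma_k$ side and to $f_{k+1}'(p_k) = 0$ from the $\gamma_{k+1}$ side, so $f'(p_k)$ exists and equals $0$, and the same observation shows $f'$ is continuous there. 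At the endpoints $z_0 = p_0$ and $w_0 = p_n$ only one adjacent piece contributes, giving $f'(z_0) = f_1'(z_0) = 0$ and $f'(w_0) = f_n'(w_0) = 0$ directly. Hence $f \in \DD(\Jim{\gamma})$ with $0 < |f'|_{\Jim{\gamma}} \leq 3$ and $f'(z_0) = f'(w_0) = 0$. Finally, telescoping the estimates $B_{k+1} > B_k + \tfrac12|p_k - p_{k-1}|$ yields
\[
|f(z_0) - f(w_0)| = f_n(p_n) = B_{n+1} > \tfrac12 \sum_{k=1}^n |p_k - p_{k-1}| > \tfrac{A}{2}\,,
\]
which completes the argument.
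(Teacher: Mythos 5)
Your proposal is correct and follows essentially the same route as the paper's own proof: choose intermediate points along $\gamma$ whose chord-sum exceeds $A$, apply the preceding lemma to each subpath with the constant $B$ chosen so values match at the cut points, and patch the pieces together. The paper leaves the gluing step as ``obvious''; your verification that the vanishing endpoint derivatives make $f'$ exist and be continuous at the junctions (via the homeomorphism property of the Jordan path) is exactly the detail being suppressed there.
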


\begin{proof}
Choose finitely many intermediate points on $\gamma$ strictly between $z_0$ and $w_0$, say $z_1, z_2, z_3, \dots, z_{n-1}$ in order along $\gamma$, such that, setting $z_n=w_0$, we have
\[\sum_{k=1}^n |z_k-z_{k-1}| > A\,.\]

We now apply the previous lemma successively to the $n$ subpaths joining $z_{k-1}$ to $z_k$ for $k=1,2,\dots,n$, starting with $B=0$ for the first arc, and arranging for the function values to match at the endpoints. We patch together the resulting functions in the obvious way to obtain the desired function $f$.
\end{proof}

\begin{theorem}
\label{arc-conjecture}
Let $J$ be a non-rectifiable Jordan arc. Then the normed algebra $(\DD(J),\Jnorm)$ is incomplete.
\end{theorem}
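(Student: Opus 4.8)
The plan is to use the criterion that, for a connected, compact plane set $X$, the normed algebra $(\DD(X),\norm)$ is complete if and only if $\QX(z)<\infty$ for every $z\in X$ (the reformulation of Corollary \ref{term-eliminated-cor} in terms of $\QX$). Since a Jordan arc $J$ is connected and compact, it suffices to exhibit a single point $z_0\in J$ at which $Q_J(z_0)=\infty$; the two preceding lemmas supply exactly the functions needed to witness this. The geometric heart of the matter will be to locate a point $z_0$ near which $J$ has infinite length on one fixed side.

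First I would fix a homeomorphism $\phi:\I\to J$; non-rectifiability of $J$ means that $\phi$ has infinite length (infinite total variation). I would then find a point of \emph{local} non-rectifiability by repeated bisection: at each stage at least one of the two halves of the current parameter interval carries infinite length, so I obtain a nested sequence of closed subintervals $[a_n,b_n]$ with $b_n-a_n=2^{-n}$ on each of which $\phi$ has infinite length. Their intersection is a single parameter $t_0$, and with $z_0=\phi(t_0)$ one checks (choosing $n$ with $2^{-n}\leq\delta$, so that $[a_n,b_n]\subseteq[t_0-\delta,t_0+\delta]$) that $\phi$ has infinite length on $[t_0-\delta,t_0+\delta]\cap\I$ for every $\delta>0$. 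Splitting this length additively at $t_0$ and using that the length of $\phi|_{[t_0,t_0+\delta]}$ is monotone in $\delta$, at least one side must contribute infinite length for all small $\delta$; after reflecting if necessary I may assume that $\phi|_{[t_0,t_0+\delta]}$ has infinite length for every small $\delta>0$ (in particular $t_0<1$).

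Next, fix such a small $\delta$ and set $w=\phi(t_0+\delta)$, so that $\gamma:=\phi|_{[t_0,t_0+\delta]}$ is a Jordan path from $z_0$ to $w$ of infinite length. Given any $A>0$, the last lemma above supplies $f\in\DD(\Jim{\gamma})$ with $0<|f'|_{\Jim{\gamma}}\leq 3$, with $f'(z_0)=f'(w)=0$, and with $|f(z_0)-f(w)|>A/2$. Because the endpoint derivatives vanish, I can extend $f$ to all of $J$ by making it constantly equal to $f(z_0)$ on $\phi([0,t_0])$ and constantly equal to $f(w)$ on $\phi([t_0+\delta,1])$; the vanishing of $f'$ at $z_0$ and at $w$ guarantees that the extended function lies in $\DD(J)$ with $|f'|_J\leq 3$. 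Applying the definition of $Q_J(z_0)$ to $f/3$ then gives
\[
Q_J(z_0)\geq \frac{1}{3}\,\frac{|f(z_0)-f(w)|}{|z_0-w|}>\frac{A}{6\,|z_0-w|}\,.
\]
Since $|z_0-w|$ is fixed while $A>0$ is arbitrary, this forces $Q_J(z_0)=\infty$, and the completeness criterion yields that $(\DD(J),\norm)$ is incomplete.

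The routine steps (the estimates inside the two lemmas, and checking that a constant extension across a point of vanishing derivative keeps a function in $\DD(J)$) are straightforward. The main obstacle is the middle paragraph: one must argue carefully that a Jordan arc of infinite length genuinely has a point near which it is non-rectifiable \emph{from one fixed side}, rather than merely carrying its length scattered globally. The bisection produces a point of local non-rectifiability, and it is the monotonicity-plus-additivity argument that upgrades this to the one-sided statement which the lemma requires.
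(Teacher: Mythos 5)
Your proof is correct and follows essentially the paper's own route: both arguments rest on the criterion that completeness of $(\DD(J),\Jnorm)$ forces $Q_J(z)<\infty$ at every point, on the same lemma producing $f$ with $0<|f'|\leq 3$, vanishing endpoint derivatives, and endpoint oscillation greater than $A/2$ on a Jordan subpath of infinite length, and on the same trick of extending $f$ by constants to all of $J$. The only difference is the limiting scheme --- you fix the subpath (hence $|z_0-w|$) and let $A\to\infty$, whereas the paper fixes $A=6$ and lets $w\to z_0$ along a one-sided locally non-rectifiable piece; your variant actually renders the bisection/localization paragraph superfluous, since with growing $A$ the lemma could be applied directly to the whole path $\phi$, taking $z_0=\phi(0)$ and $w=\phi(1)$.
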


\begin{proof}
By the definition of non-rectifiable Jordan arc, we have $J=\Jim{\gamma}$ for some non-rectifiable Jordan path $\gamma:[a_0,b_0]\rightarrow \C$.
It follows easily by compactness and symmetry that we may suppose that there exists an $a\in [a_0,b_0)$ such that, for all $b \in (a,b_0]$,
the restriction $\gamma|_{[a,b]}$ is also non-rectifiable.

Set $z_0=\gamma(a)$.
We shall show that $\QX(z_0)=\infty$.

Let $C>0$.
Choose $b \in (a,b_0)$ such that $C |\gamma(a)-\gamma(b)|< 1$, and
set $w_0 = \gamma(b)$, so that
\[
\frac{1}{|z_0 - w_0|} > C\,.
\]
Set $A=6$, and
apply the previous lemma to the restriction $\gamma|_{[a,b]}$. Extend the resulting function $f$ to be constant on
$\gamma([a_0,a])$ and on  $\gamma([b,b_0])$. We then see that
$f \in \DD(J)$, that
$|f(z_0)-f(w_0)|> A/2 = 3$,
and that
\[
0<|f'|_J \leq 3 < |f(z_0)-f(w_0)|\,.
\]
But then

\[
\QX(z_0) \geq \frac{|f(z_0)-f(w_0)|}{|f'|_J |z_0-w_0|} > C\,.
\]
As the choice of $C>0$ was arbitrary, $\QX(z_0) = \infty$, and so $\DDXN$  is incomplete.
\end{proof}
\sskip
We do not know whether or not $\DDXN$ is incomplete for each poly\-nomially convex, compact plane set $X$ which has empty interior and which is not pointwise regular.

\smallskip
We shall now introduce some useful test functions which will enable us to give lower bounds for $\QX(z_0)$ whenever a compact plane set $X$ has a suitable \lq dent' near $z_0$.

We denote by $D_0$ the standard cut plane obtained by deleting the non-positive real axis from $\C$.
For $z \in D_0$,
we denote the principal argument of $z$ by $\Arg z$ (so that $- \pi < \Arg z < \pi$),
and we define
\[
\Log z = \log |z| + \i \Arg z\quad (z \in D_0)
\]
(so that $\Log$ is the {\sl principal logarithm} defined on $D_0$).
For $z \in D_0$ and $\alpha \in \C$, we define $z^\alpha$ by
$z^\alpha = \exp(\alpha \Log z)$, and we define the function
\[
Z^\alpha:\,z \mapsto z^\alpha,\quad D_0\rightarrow \C\,.
\]

We now give some elementary facts about the function $Z^\i$.

\begin{lemma}
Let $z\in D_0$. Then $|z^\i| = \e^{-\Arg z}$, and so  $\e^{-\pi} < |z^\i| < \e^{\pi}$. If $z$ is in the second quadrant,
we have $|z^\i| \leq \e^{-\pi/2}$, while if $z$ is in the third quadrant, then $|z^\i|\geq \e^{\pi/2}$.
\QED
\end{lemma}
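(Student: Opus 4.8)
The plan is to compute $z^\i$ directly from its definition and simply read off the modulus. By definition, $z^\i = \exp(\i \Log z)$, and since $\Log z = \log|z| + \i \Arg z$, I would expand the exponent as
\[
\i \Log z = \i \log |z| - \Arg z\,,
\]
so that $z^\i = \e^{-\Arg z}\,\e^{\i \log|z|}$. The second factor has modulus $1$, so taking absolute values gives $|z^\i| = \e^{-\Arg z}$ at once. This is the first assertion, and everything else is a consequence of it.

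For the global bounds I would invoke only the defining property of the principal argument on $D_0$, namely that $-\pi < \Arg z < \pi$. Since $t \mapsto \e^{-t}$ is strictly decreasing on $\R$, this range for $\Arg z$ yields $\e^{-\pi} < \e^{-\Arg z} < \e^{\pi}$, which is exactly $\e^{-\pi} < |z^\i| < \e^{\pi}$.

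The quadrant estimates follow in the same fashion, using the appropriate ranges of $\Arg$. If $z$ lies in the second quadrant, then $\Arg z \geq \pi/2$, so $-\Arg z \leq -\pi/2$ and hence $|z^\i| = \e^{-\Arg z} \leq \e^{-\pi/2}$; if $z$ lies in the third quadrant, then $\Arg z \leq -\pi/2$, so $-\Arg z \geq \pi/2$ and $|z^\i| \geq \e^{\pi/2}$.

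There is no genuine obstacle here, as the lemma is a routine unwinding of the definition of $Z^\i$. The only point demanding a moment's care is the convention fixing which boundary rays belong to the second and third quadrants, since the claimed inequalities are non-strict. Assigning the positive imaginary axis (where $\Arg z = \pi/2$) to the second quadrant and the negative imaginary axis (where $\Arg z = -\pi/2$) to the third makes both $|z^\i| \leq \e^{-\pi/2}$ and $|z^\i| \geq \e^{\pi/2}$ hold, with equality precisely on these two axes.
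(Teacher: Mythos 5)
Your proposal is correct: the identity $|z^\i| = \e^{-\Arg z}$ follows exactly as you compute, and the global and quadrant bounds then drop out of the range of $\Arg$ on $D_0$. The paper states this lemma without proof, regarding it as immediate, and your argument is precisely the routine verification it leaves to the reader, including the sensible handling of the boundary rays needed for the non-strict inequalities.
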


\begin{corollary}
Let $z, w \in D_0$, and suppose that $z$ is in the second quadrant and that $w$ is in the third quadrant {\rm (}or vice versa{\rm )}. Then
\[
|z^\i - w^\i| \geq  \e^{\pi/2} -  \e^{-\pi/2} \geq 1\,.
\]
\lift\QED
\end{corollary}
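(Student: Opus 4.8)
The plan is to deduce the corollary immediately from the preceding lemma, using nothing more than the reverse triangle inequality together with one elementary numerical estimate. First I would dispose of the phrase \lq or vice versa': since $|z^\i - w^\i| = |w^\i - z^\i|$, the statement is symmetric in $z$ and $w$, so there is no loss of generality in assuming that $z$ lies in the second quadrant and $w$ in the third quadrant.

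With this normalization, the lemma applies directly: the second-quadrant hypothesis on $z$ gives $|z^\i| \leq \e^{-\pi/2}$, while the third-quadrant hypothesis on $w$ gives $|w^\i| \geq \e^{\pi/2}$. The reverse triangle inequality then yields
\[
|z^\i - w^\i| \geq |w^\i| - |z^\i| \geq \e^{\pi/2} - \e^{-\pi/2}\,,
\]
which is the first of the two asserted inequalities.

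It remains only to check the purely numerical bound $\e^{\pi/2} - \e^{-\pi/2} \geq 1$. This is elementary: since $\pi/2 > \log 2$ we have $\e^{\pi/2} > 2$, and since $\pi/2 > 0$ we have $\e^{-\pi/2} < 1$, so subtracting gives $\e^{\pi/2} - \e^{-\pi/2} > 2 - 1 = 1$. There is no genuine obstacle in this corollary; all of the geometric content has already been isolated in the lemma, and the only point requiring any care is recording the one-line justification of the final numerical inequality so that the chain of estimates is complete.
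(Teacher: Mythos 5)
Your proposal is correct and is exactly the argument the paper intends: the corollary is stated with no written proof precisely because it follows from the preceding lemma's quadrant estimates $|z^\i| \leq \e^{-\pi/2}$, $|w^\i| \geq \e^{\pi/2}$ via the reverse triangle inequality, plus the elementary numerical check $\e^{\pi/2} - \e^{-\pi/2} \geq 1$. Your explicit handling of the \lq or vice versa' symmetry and the one-line numerical verification are fine additions but do not change the approach.
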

\sskip
We shall be particularly interested in the properties of the analytic function $F=Z^{1+\i}$ on $D_0$.
Note that $F'(z)=(1+\i) z^\i~(z \in D_0)$.

\begin{lemma}
The following estimates hold for all $z \in D_0$:
\smallskip
\begin{enumerate}
\item[(i)] $\e^{-\pi} |z| \leq |F(z)| \leq \e^{\pi} |z|$;
\smallskip
\item[(ii)] $\sqrt 2 \,\e^{-\pi} \leq |F'(z)| \leq \sqrt 2 \,\e^{\pi}$.\QED
\end{enumerate}
\end{lemma}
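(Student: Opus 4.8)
The plan is to reduce both estimates to the elementary bound $-\pi < \Arg z < \pi$ by extracting the appropriate real part of the exponent. First I would write $F(z) = z^{1+\i} = \exp((1+\i)\Log z)$ directly from the definition of $Z^\alpha$, and then expand, using $\Log z = \log|z| + \i\Arg z$, to obtain
\[
(1+\i)\Log z = (\log|z| - \Arg z) + \i(\log|z| + \Arg z)\,.
\]

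For part (i), the point is that $|F(z)| = \exp\bigl(\Re[(1+\i)\Log z]\bigr)$, and by the displayed identity this real part equals $\log|z| - \Arg z$. Hence $|F(z)| = |z|\,\e^{-\Arg z}$. Since $-\pi < \Arg z < \pi$ for every $z \in D_0$, we have $\e^{-\pi} < \e^{-\Arg z} < \e^{\pi}$, and multiplying through by $|z|$ yields the (strict, and hence certainly non-strict) bounds claimed in (i).

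For part (ii), I would simply invoke the formula $F'(z) = (1+\i) z^\i$ recorded just before the lemma, together with the preceding lemma, which gives $|z^\i| = \e^{-\Arg z}$. Since $|1+\i| = \sqrt 2$, this produces $|F'(z)| = \sqrt 2\,\e^{-\Arg z}$, and the same bound $\e^{-\pi} < \e^{-\Arg z} < \e^{\pi}$ delivers (ii).

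There is no real obstacle here: both parts are immediate once one observes that the modulus of $F$ (respectively, of $F'$) depends on $z$ only through the factor $|z|\,\e^{-\Arg z}$ (respectively, $\e^{-\Arg z}$). The only point requiring any care is the correct extraction of the real part of $(1+\i)\Log z$ and keeping track of the sign of $\Arg z$; everything else is routine, and in fact each inequality in the statement could legitimately be replaced by a strict one.
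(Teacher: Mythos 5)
Your proof is correct and is essentially the argument the paper intends: the paper states this lemma without proof, treating it as immediate from the identity $F'(z)=(1+\i)z^\i$ and the preceding lemma's bound $\e^{-\pi}<|z^\i|=\e^{-\Arg z}<\e^{\pi}$, which is exactly what your computation carries out (your part (i) amounts to the factorization $z^{1+\i}=z\cdot z^\i$). Your observation that the inequalities are in fact strict is also correct, since $-\pi<\Arg z<\pi$ strictly on $D_0$.
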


\begin{lemma}
Let $z,w \in D_0$. Suppose that $z$ is in the
second quadrant and that $w$ is in the third quadrant.
Then
\[
|F(z)-F(w)| \geq |z| - \e^\pi |z-w|\,.
\]
\end{lemma}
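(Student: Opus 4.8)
The plan is to exploit the multiplicative structure of $F$. Since $F=Z^{1+\i}$ and $z^{1+\i}=\exp((1+\i)\Log z)=z\cdot z^\i$ for $z\in D_0$, we have $F(z)=z\,z^\i$ and $F(w)=w\,w^\i$. The idea is to split the difference $F(z)-F(w)$ into one term that sees the large gap between $z^\i$ and $w^\i$ guaranteed by the preceding corollary, together with a harmless error term controlled by $|z-w|$.

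Concretely, I would add and subtract $z\,w^\i$ to obtain the decomposition
\[
F(z)-F(w)=z\,z^\i-w\,w^\i=z(z^\i-w^\i)+(z-w)\,w^\i\,,
\]
and then apply the reverse triangle inequality to get
\[
|F(z)-F(w)|\geq |z|\,|z^\i-w^\i|-|z-w|\,|w^\i|\,.
\]

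The two factors are then controlled by the earlier results. Since $z$ lies in the second quadrant and $w$ in the third, the Corollary on $Z^\i$ gives $|z^\i-w^\i|\geq 1$, so the first term is at least $|z|$. For the error term, the Lemma on $Z^\i$ gives $|w^\i|=\e^{-\Arg w}<\e^\pi$, so the subtracted quantity is at most $\e^\pi|z-w|$. Combining these yields
\[
|F(z)-F(w)|\geq |z|-\e^\pi|z-w|\,,
\]
as required.

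There is no real obstacle here once the decomposition $z(z^\i-w^\i)+(z-w)w^\i$ is written down: the estimate is then immediate from the two cited facts. The only points requiring a little care are to check that the quadrant hypotheses match exactly those of the Corollary (one point in the second quadrant, one in the third) so that $|z^\i-w^\i|\geq 1$ applies, and to use the general bound $|w^\i|<\e^\pi$ rather than the sharper quadrant-specific estimates, which are not needed for this inequality.
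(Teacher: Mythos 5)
Your proof is correct and is essentially identical to the paper's own argument: the same decomposition $F(z)-F(w)=(z^\i-w^\i)z+w^\i(z-w)$, followed by the reverse triangle inequality and the two cited estimates $|z^\i-w^\i|\geq 1$ and $|w^\i|\leq\e^\pi$.
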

\begin{proof}
Writing
\[
F(z)-F(w) = z^\i z - w^\i w = (z^\i - w^\i)z + w^\i(z-w)\,,
\]
we obtain (using our preceding estimates) the inequalities
\[
|F(z)-F(w)| \geq |z^\i -w^\i||z| - |w^\i||z-w| \geq |z| - \e^\pi |z-w|\,,
\]
as required.
\end{proof}
\sskip
\noindent
{\bf Note.} In this setting, if $|z-w|/|z|$ is very small, then $|F(z)-F(w)|$ is much bigger than $|z-w|$.

As an immediate consequence of the above lemmas, we obtain the following result concerning the function $\QX$ when $X \subseteq D_0$.

\begin{proposition}
There exist universal constants $C_Q, C_Q'>0$ satisfying the following.
Let $X$ be a connected, compact plane set with $X \subseteq D_0$.
Suppose that $z,w \in X$, and that $z$ is in the second quadrant and $w$ is in the third quadrant.
Then
\[
\QX(z) \geq C_Q \frac{|z|}{|z-w|} - C'_Q\,.
\]
\lift\QED
\end{proposition}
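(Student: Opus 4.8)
The plan is to exhibit a single, explicit test function and then apply the estimates just established. The natural candidate is the analytic function $F = Z^{1+\i}$ studied in the preceding lemmas. Since $X \subseteq D_0$ and $F$ is analytic on the open set $D_0$, we have $F \in O(X) \subseteq \DD(X)$, with usual complex derivative $F'(z) = (1+\i)z^\i$ on $X$.

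First I would normalize $F$ so that it becomes admissible in the supremum defining $\QX(z)$. By the lemma bounding $|F'|$, we have $|F'(z)| \leq \sqrt{2}\,\e^{\pi}$ for all $z \in D_0$, and hence $|F'|_X \leq \sqrt{2}\,\e^{\pi}$. Setting $f = F/(\sqrt{2}\,\e^{\pi})$, we obtain $f \in \DD(X)$ with $|f'|_X \leq 1$, so $f$ is a legitimate competitor in the supremum.

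Next I would feed this $f$, together with the given point $w$, into the definition of $\QX(z)$. Since $z$ lies in the second quadrant and $w$ in the third quadrant, the two points are distinct, so $w \in X \setminus \{z\}$, and the lemma estimating $|F(z)-F(w)|$ applies to give $|F(z)-F(w)| \geq |z| - \e^{\pi}|z-w|$. Therefore
\[
\QX(z) \geq \frac{|f(z)-f(w)|}{|z-w|} = \frac{|F(z)-F(w)|}{\sqrt{2}\,\e^{\pi}\,|z-w|} \geq \frac{|z| - \e^{\pi}|z-w|}{\sqrt{2}\,\e^{\pi}\,|z-w|} = \frac{1}{\sqrt{2}\,\e^{\pi}}\,\frac{|z|}{|z-w|} - \frac{1}{\sqrt 2}\,.
\]
This is exactly the claimed inequality, with the universal constants $C_Q = 1/(\sqrt{2}\,\e^{\pi})$ and $C_Q' = 1/\sqrt{2}$, which depend on nothing but the construction.

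There is no genuine obstacle here: the substantive work has already been carried out in the preparatory lemmas, and connectedness of $X$ is in fact not needed for this lower bound (it is retained only to match the framework of the completeness criterion expressed through $\QX$). The only points requiring care are confirming that $F$ really lies in $\DD(X)$ with its usual complex derivative — which follows from $O(X) \subseteq \DD(X)$ together with analyticity on $D_0$ — and tracking the normalization so that $|f'|_X \leq 1$ while the constants $C_Q$ and $C_Q'$ remain independent of $X$, $z$, and $w$.
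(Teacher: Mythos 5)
Your proposal is correct and is precisely the argument the paper intends: the Proposition is stated there as an ``immediate consequence of the above lemmas,'' namely normalizing $F = Z^{1+\i}$ by the derivative bound $\sqrt{2}\,\e^{\pi}$ so that it is admissible in the supremum defining $\QX(z)$, and then applying the estimate $|F(z)-F(w)| \geq |z| - \e^{\pi}|z-w|$. Your explicit constants $C_Q = 1/(\sqrt{2}\,\e^{\pi})$ and $C_Q' = 1/\sqrt{2}$, and the observation that connectedness is not actually used, are both accurate.
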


These same universal constants $C_Q, C'_Q$ will be fixed for the rest of this paper. We shall use them repeatedly below.

As $\QX$ is clearly invariant under reflections, rotations and translations, we obtain the following further corollary.

\begin{corollary}
\label{half-lines}
Let $a \in \C$, let $L$ be a closed half-line in $\C$ joining the point $a$ to $\infty$, and let $H$ be the closed half-plane containing $L$ and bisected by it. Let $X$ be a connected, compact plane set with $X\cap L = \emptyset$, and suppose that $z, w \in X \cap H$ are such that the straight line joining $z$ to $w$ meets $L$.
Then
\[
\QX(z) \geq C_Q \frac {|z-a|}{|z-w|} - C'_Q\,.
\]
\lift\QED
\end{corollary}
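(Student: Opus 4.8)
The plan is to deduce Corollary~\ref{half-lines} from the preceding Proposition by transporting the given configuration onto that of the Proposition via a rigid motion and then invoking the invariance of $\QX$ under reflections, rotations, and translations noted just above.

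First I would choose a rigid motion $T$ of the plane (a translation composed with a rotation) with $T(a)=0$ and with $T(L)$ equal to the non-positive real axis. Since $H$ is the closed half-plane bounded by the line through $a$ perpendicular to $L$, taken on the side of that line containing $L$, the motion $T$ carries $H$ onto the closed left half-plane $\{\zeta : \Re\zeta\le 0\}$. Writing $Y=T(X)$, $z'=T(z)$, and $w'=T(w)$, I note that $T$ is an isometry carrying $L$ onto the non-positive real axis and that $X\cap L=\emptyset$; hence $Y$ is a connected, compact set with $Y\subseteq D_0$, and $z',w'\in Y$ with $\Re z',\Re w'\le 0$.

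Next I would verify that $z'$ and $w'$ lie in the second and third quadrants, one in each. Applying $T$ to the hypothesis, the segment joining $z'$ to $w'$ meets the negative real axis, so there is a point $p=(1-t)z'+tw'$ with $t\in[0,1]$, $\Im p=0$, and $\Re p\le 0$. Because $z',w'\in Y\subseteq D_0$, neither lies on the non-positive real axis, so $p\neq z'$ and $p\neq w'$, giving $t\in(0,1)$; moreover $\Im z'\neq 0$ and $\Im w'\neq 0$. The relation $(1-t)\Im z'=-t\,\Im w'$ with $t\in(0,1)$ then forces $\Im z'$ and $\Im w'$ to have opposite signs, which together with $\Re z',\Re w'\le 0$ places one of them in the second quadrant and the other in the third. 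Composing $T$ with the reflection in the real axis if necessary---this reflection fixes $L$ and $H$, preserves $D_0$, and leaves $\QX$ unchanged---I may arrange that $z'$ is in the second quadrant and $w'$ in the third.

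Finally I would apply the Proposition to $Y$, $z'$, $w'$, obtaining
\[
Q_Y(z')\ \geq\ C_Q\,\frac{|z'|}{|z'-w'|}-C_Q'\,.
\]
Since $T$ is an isometry with $T(a)=0$ we have $|z'|=|z-a|$ and $|z'-w'|=|z-w|$, while the invariance of $\QX$ gives $Q_Y(z')=\QX(z)$; substituting yields the asserted inequality. I expect the only step needing care to be the quadrant bookkeeping: one uses $X\cap L=\emptyset$ to exclude $z'$ and $w'$ from the negative real axis, and the borderline case $\Re z'=0$ (a point on the imaginary axis) is harmless, since the estimate underlying the Proposition---that $|\zeta^{\i}|\le \e^{-\pi/2}$ when $\Arg\zeta\ge \pi/2$---is a non-strict inequality that remains valid on the bounding ray. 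Everything else is a mechanical transport along $T$.
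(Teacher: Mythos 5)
Your proposal is correct and is precisely the argument the paper intends: the corollary is stated as an immediate consequence of the preceding Proposition together with the observed invariance of $\QX$ under translations, rotations, and reflections, which is exactly your transport-by-rigid-motion argument. Your additional bookkeeping (using $X\cap L=\emptyset$ to place $z'$ and $w'$ strictly off the cut, the opposite-sign argument for $\Im z'$ and $\Im w'$, and the harmlessness of the closed-quadrant boundary) just fills in details the paper leaves to the reader.
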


\smallskip

We shall now consider the case of a compact plane set with  a sequence of dents.

The context will be as follows.
Let $X$ be a connected, compact plane set.
Let $(w_n)$ be a convergent sequence in $X$ with limit $z_0$.
Let $(a_n)$ be a bounded sequence in $\C \setminus X$, and suppose that, for each $n \in \N$, there is a closed half-line $L_n \subseteq \C\setminus X$
joining $a_n$ to $\infty$.
Let $H_n$ be the closed half-plane containing $L_n$ and bisected by it.
Suppose further that, for each $n \in \N$,  $w_n$ and $z_0$ are in $X \cap H_n$ and the straight line joining $z_0$ to $w_n$ meets $L_n$.

\begin{theorem}
\label{long-dents}
With notation as above, suppose that
\[
|z_0-w_n|=o(|z_0-a_n|) \quad\text{as}\:\, n \to \infty\,.
\]
Then $\DDXN$ is incomplete.
\end{theorem}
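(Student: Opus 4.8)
The plan is to apply Corollary \ref{half-lines} once for each $n \in \N$ and then let $n \to \infty$; essentially all of the geometric work has already been packaged into that corollary, so the argument is short. Discarding finitely many terms if necessary, we may assume that $w_n \neq z_0$ for every $n$, so that the quotients below are well defined; we note also that $a_n \neq z_0$ automatically, since $a_n \in \C \setminus X$ while $z_0 \in X$.

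First I would verify that, for each fixed $n$, the hypotheses of Corollary \ref{half-lines} are met with $a = a_n$, $L = L_n$, $H = H_n$, $z = z_0$, and $w = w_n$. Indeed, $X \cap L_n = \emptyset$ because $L_n \subseteq \C \setminus X$; both $z_0$ and $w_n$ lie in $X \cap H_n$ by assumption; and the straight line joining $z_0$ to $w_n$ meets $L_n$, again by assumption. The corollary therefore yields
\[
\QX(z_0) \geq C_Q \frac{|z_0 - a_n|}{|z_0 - w_n|} - C'_Q \quad (n \in \N)\,.
\]

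Next I would bring in the growth hypothesis. The assumption $|z_0 - w_n| = o(|z_0 - a_n|)$ says exactly that $|z_0 - w_n|/|z_0 - a_n| \to 0$, equivalently that $|z_0 - a_n|/|z_0 - w_n| \to \infty$, as $n \to \infty$. Since the left-hand side $\QX(z_0)$ of the displayed inequality is a single extended real number, independent of $n$, while the right-hand side tends to $+\infty$, it follows that $\QX(z_0) = \infty$.

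Finally, I would invoke the characterisation of completeness in terms of $\QX$ (the theorem rephrasing Corollary \ref{term-eliminated-cor}): since $X$ is connected and $\QX(z_0) = \infty$, the normed algebra $\DDXN$ cannot be complete. I expect no real obstacle here beyond keeping the direction of the $o$-estimate straight; the substantive content, namely the test function $Z^{1+\i}$ and the half-plane estimate it produces, is entirely absorbed into Corollary \ref{half-lines}.
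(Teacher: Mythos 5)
Your proposal is correct and follows essentially the same route as the paper's own proof: apply Corollary \ref{half-lines} with $a=a_n$, $z=z_0$, $w=w_n$ to obtain $\QX(z_0) \geq C_Q\,|z_0-a_n|/|z_0-w_n| - C'_Q$, let $n\to\infty$ using the $o$-hypothesis to get $\QX(z_0)=\infty$, and conclude incompleteness from the $\QX$-characterisation of completeness. The only difference is that you spell out the verification of the corollary's hypotheses and the degenerate case $w_n=z_0$ (which in fact cannot occur, since the segment joining $z_0$ to $w_n$ must meet $L_n\subseteq\C\setminus X$), details the paper leaves implicit.
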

\begin{proof}
We shall show that $Q_X(z_0)=\infty$.
By Corollary \ref{half-lines}, for each $n \in \N$, we have
\[
\QX(z_0) \geq C_Q \frac {|z_0-a_n|}{|z_0-w_n|} - C'_Q\,.
\]
Letting $n \to \infty$, we obtain $Q_X(z_0)=\infty$, and so $\DDXN$  is incomplete.
\end{proof}

We now describe some classes of sets to which  the above theorem applies.

We start from the closed unit \jchange square $S=\I \times \I$ in $\R^2$.
Let $(r_n)$ and $(s_n)$ be sequences in the open interval $(0,1)$ such that $(s_n)$ is strictly decreasing and converges to $0$.
For each $n \in \N$, set
\[
R_n = [0,r_n),\times (s_{2n},s_{2n-1})\,.
\]
Set $X=S \setminus \bigcup_{n=1}^\infty R_n$.
The set $X$, regarded as a subset of $\C$, is illustrated in
Figure~\ref{square-BF-example}, below.

\begin{figure}[htb]
\input{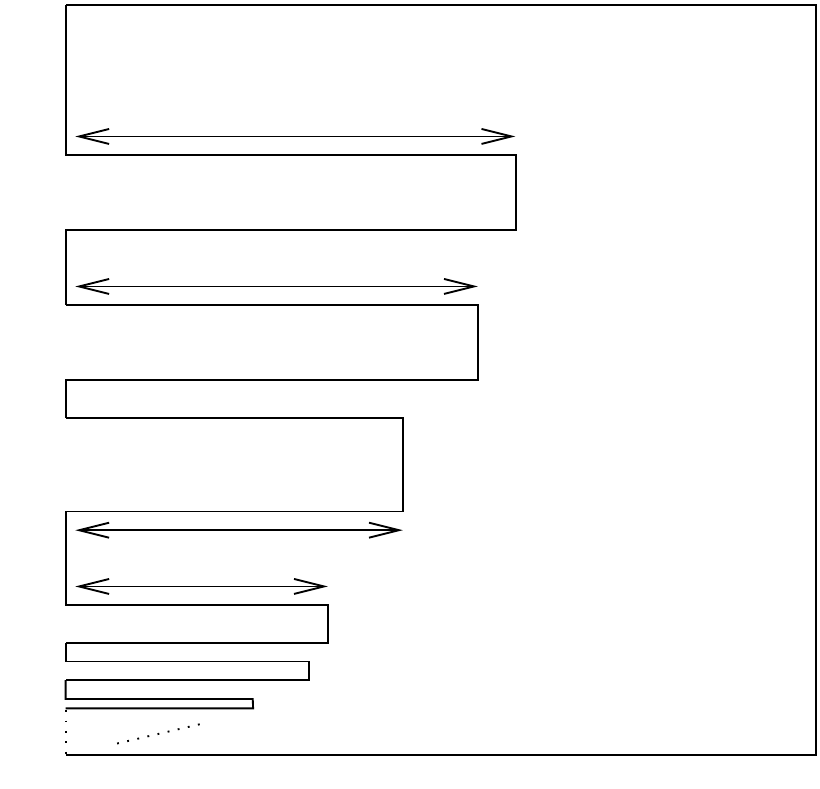_t}
\caption{A square with rectangles deleted}
\label{square-BF-example}
\end{figure}

Clearly $X$ is a polynomially convex, geodesically bounded, compact plane set, $\Tint X$ is connected and dense in $X$, and $X$ is regular at all points of $X \setminus \{0\}$.
\sskip
With the assistance of Theorem \ref{long-dents}, we see easily that the following conditions on $X$ are equivalent:

\begin{enumerate}
\item[(a)] $X$ is regular at $0$\,;
\sskip
\item[(b)] $X$ is pointwise regular;
\sskip
\item[(c)] $\DDXN$ is complete;
\sskip
\item[(d)] $r_n=O(s_{2n-1})$.
\end{enumerate}

In particular, these sets provide many examples of compact plane sets $X$ such that $\Tint X$ is connected and dense in $X$, and yet $\DDXN$ is incomplete, thus answering a question raised in \cite{Bland-Fein}.

\sskip
It is fairly easy to generalize Theorem \ref{long-dents} somewhat by replacing the half-lines $L_n$ by suitable curves joining $a_n$ to $\infty$, and imposing appropriate conditions on the positions of $z_0$ and $w_n$ relative to these curves. All that is required is that suitable versions of Corollary \ref{half-lines} be valid for these curves, without losing control of the constants involved. (There are problems, for example, with sequences of curves which spiral round increasingly often.) Although such a generalization would allow us to prove the incompleteness of $\DDXN$ for some additional compact plane sets $X$, there are clearly limitations to this method, and we shall not pursue it further here.

\sskip
Let $X$ be a compact plane set. Recall that $X$ is \emph{star-shaped} with a \emph{star-centre} $a \in X$ if, for all
$z \in X$, the straight line joining $a$ to $z$ lies entirely within $X$. The compact plane set
$X$ is {\sl radially self-absorbing} if, for all $r>1$,
$X \subseteq \Tint(r X)$, where $rX = \{rz:z \in X\}$. Such radially self-absorbing sets are discussed in \cite{FLO}.

Radially self-absorbing sets are always star-shaped, polynomially convex and geodesically bounded, and have dense interior.

\sskip
Using Theorem \ref{long-dents}, we now construct a radially self-absorbing set $X$ such that $\DDXN$ is incomplete; this answers a question raised in \cite{Bland-Fein}.

\begin{example}
Set $z_0=1$. For each $n \in \N$, set
\[
r_n=\frac{1}{4\sqrt{n}}\,,
\quad\alpha_n = \frac{\pi}{4 n^2}\,,
\quad\beta_n = \frac{\alpha_n + \alpha_{n+1}}{2}\,,
\]
and
\[
w_n=\e^{\i \alpha_n}\,,
\quad z_n=(1-2r_n)\e^{\i \beta_n}\,,
\quad a_n=(1-r_n)\e^{\i \beta_n}\,.
\]
Note that $|z_0-w_n|=O(1/n^2)$, while
\[
|z_0-a_n|\geq\frac{1}{4\sqrt{n}}\quad(n \in \N)\,.
\]
Thus $|z_0-w_n|=o(|z_0-a_n|)$ as $n \to \infty$.

For each $n \in \N$, \jchange let $U_n$ be the acute open sector of $\C$ with vertex at $z_n$ and boundary lines passing through
$w_n$ and $w_{n+1}$.
Let $X$ be the compact set obtained by deleting from the closed unit disc
the union of all the open sectors $U_n$.
\begin{figure}[htb]
\input{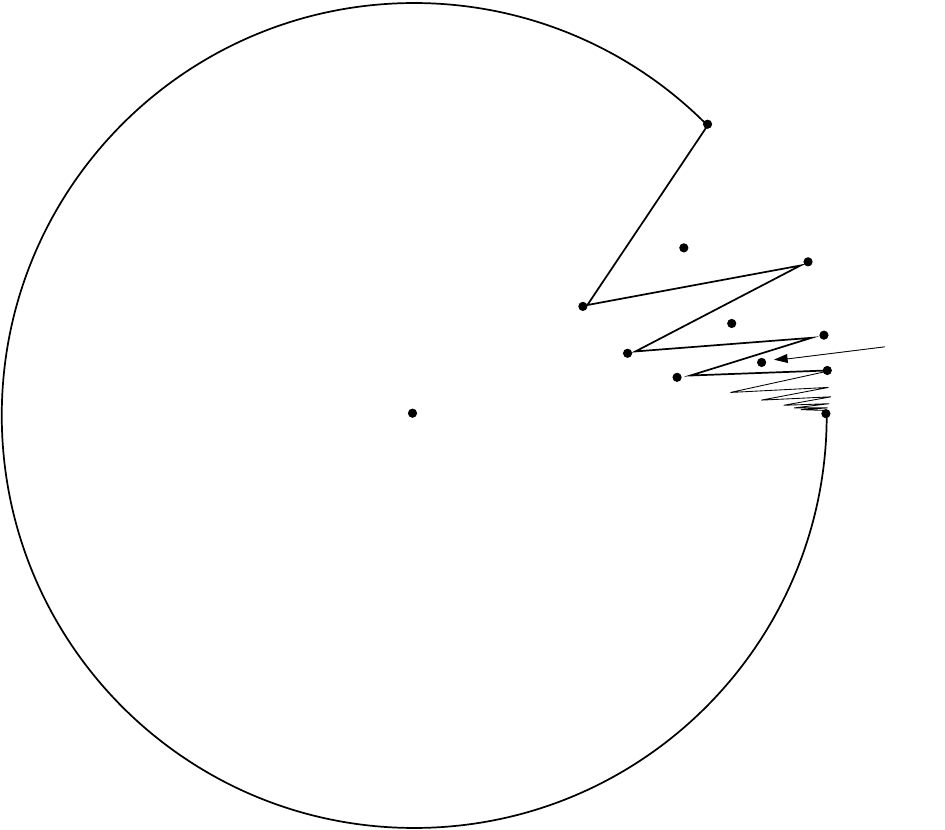_t}
\caption{A \lq bad' radially self-absorbing set}
\label{RSA-example}
\end{figure}

The set $X$ is illustrated in Figure \ref{RSA-example}, above.
Clearly $X$ is radially self-absorbing and satisfies the conditions of Theorem \ref{long-dents}.

Thus
$\DDXN$ is incomplete. \QED
\end{example}

\sskip
We shall next prove our conjecture for star-shaped sets.

\begin{theorem}
Let $X$ be a star-shaped, compact plane set $X$. Then the normed algebra $\DDXN$ is complete if and only if $X$ is pointwise regular.
\end{theorem}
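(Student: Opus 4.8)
The plan is to prove the two implications separately. If $X$ is pointwise regular then $\DDXN$ is complete by Proposition \ref{pr-sufficient}, so the substance is the converse: assuming $X$ is \emph{not} pointwise regular, I would exhibit a point $z_0$ with $\QX(z_0)=\infty$ by feeding a suitable sequence of radial ``dents'' into Theorem \ref{long-dents}. Since $\QX$ is invariant under translations and rotations, I would first translate a star-centre to $0$ and then rotate so that the offending point is $z_0=r_0$ with $r_0>0$. Because every point of $X$ is joined to $0$ by a segment lying in $X$, the set is regular at $0$; hence the failure of pointwise regularity occurs at some $z_0\neq 0$, and the through-origin paths give $\delta(z_0,w)\le |z_0|+|w|$, so $X$ is geodesically bounded. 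Non-regularity at $z_0$ then yields points $w_n\in X$ with $\delta(z_0,w_n)>n\,|z_0-w_n|$; boundedness of $\delta$ forces $|z_0-w_n|\to 0$, so $w_n\to z_0$ and $\arg w_n\to 0$.

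The heart of the argument is a polar description of $X$. For each angle $\theta$ put $\rho(\theta)=\sup\{r\ge 0:re^{\i\theta}\in X\}$; star-shapedness gives $X=\{re^{\i\theta}:0\le r\le\rho(\theta)\}$ and, via its contrapositive, the key fact that if a point lies outside $X$ then so does every point strictly further out along the same ray, so that for each angle $\psi$ the half-line $\{re^{\i\psi}:r>\rho(\psi)\}$ is contained in $\C\setminus X$. Writing $\mu_n=\inf\{\rho(\phi):\phi\text{ between }0\text{ and }\arg w_n\}$, I would bound $\delta(z_0,w_n)$ from above by the explicit rectifiable path that runs radially inward from $z_0$ to radius $\mu_n$, sweeps along the circular arc of radius $\mu_n$ (which lies in $X$ since $\mu_n\le\rho(\phi)$ throughout), and then runs radially to $w_n$; this yields $\delta(z_0,w_n)\le 2(r_0-\mu_n)+|\,|w_n|-r_0|+\mu_n|\arg w_n|$. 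Combining this with $\delta(z_0,w_n)>n\,|z_0-w_n|$ and the elementary estimates $|\,|w_n|-r_0|\le |z_0-w_n|$ and $\mu_n|\arg w_n|\le C|z_0-w_n|$, I obtain $r_0-\mu_n\ge \tfrac12(n-1-C)|z_0-w_n|$, so the ``notch depth'' $r_0-\mu_n$ is large compared with $|z_0-w_n|$.

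With this in hand I would construct the dents. For each $n$ pick an angle $\psi_n$ strictly between $0$ and $\arg w_n$ with $\rho(\psi_n)$ within $o(|z_0-w_n|)$ of $\mu_n$, and set $a_n=(\rho(\psi_n)+\varepsilon_n)e^{\i\psi_n}$ with $\varepsilon_n\to 0$ rapidly, so that $a_n\in\C\setminus X$; let $L_n=\{sa_n:s\ge 1\}$ be the outward radial half-line (contained in $\C\setminus X$) and let $H_n$ be the half-plane it bisects. Then $|z_0-a_n|\ge r_0-|a_n|\ge r_0-\mu_n-2\varepsilon_n$, whence $|z_0-a_n|/|z_0-w_n|\to\infty$, that is, $|z_0-w_n|=o(|z_0-a_n|)$; and the $a_n$ are bounded since they lie in the disc of radius $\sup\rho$. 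Because $\arg w_n,\psi_n\to 0$ while $|z_0|,|w_n|\to r_0>\mu_n$, the radial coordinates of $z_0$ and $w_n$ in the direction $\psi_n$ exceed $|a_n|$ for large $n$, so $z_0,w_n\in H_n$; and since the chord $[z_0,w_n]$ stays at radius $\approx r_0$ as its argument passes through $\psi_n$, it meets $L_n$. All hypotheses of Theorem \ref{long-dents} then hold, giving $\QX(z_0)=\infty$, so $\DDXN$ is incomplete.

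I expect the main obstacle to be carrying out the two families of geometric estimates of the middle paragraph rigorously. The upper bound on $\delta(z_0,w_n)$ must be justified while handling the fact that $\rho$ is only upper semicontinuous, so the infimum $\mu_n$ need not be attained and $\psi_n$ must be selected by an approximation argument; one must also rule out degenerate configurations, such as chords or competing paths passing through $0$, or $w_n$ lying radially aligned with $z_0$. The verifications that $z_0,w_n\in H_n$ and that $[z_0,w_n]$ genuinely meets $L_n$ are exactly where the smallness of $\arg w_n$ relative to the notch depth is used, and keeping these quantitative comparisons mutually consistent is the delicate part; once they are in place, the appeal to Theorem \ref{long-dents} is immediate.
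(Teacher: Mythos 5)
Your proposal is correct and follows essentially the same route as the paper: sufficiency is handled identically via Proposition \ref{pr-sufficient}, and for necessity both arguments exploit the star-centre to build radial dents (half-lines $L_n$ through points $a_n \notin X$ lying in small angular sectors at depth $\gg |z_0-w_n|$) and feed them into Theorem \ref{long-dents}. The only real difference is bookkeeping in how $a_n$ is found: the paper runs a comparison path (radial--circular arc--radial, centred at the star-centre) at the prescribed depth $3n|z_0-w_n|$ and, since its length is less than $\delta(z_0,w_n)$, concludes that some point of the circular arc lies outside $X$, whereas you introduce the polar radius function $\rho$, bound $\delta(z_0,w_n)$ above by the in-$X$ path at the sector infimum $\mu_n$ to force the notch depth $r_0-\mu_n$ to be large, and then place $a_n$ just beyond $\rho(\psi_n)\approx\mu_n$ --- a dual but equivalent implementation of the same geometric idea.
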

\begin{proof}
We need to consider only the case where $X$ is not pointwise regular, and to prove that $\DDXN$ is incomplete in this case.

Let $a$ be a star-centre for $X$. Given that $X$ is not pointwise regular, there must exist $z_0 \in X$ such that $X$ is not regular at $z_0$. Clearly $X$ is regular at $a$, and so $z_0 \neq a$.
Choose a sequence of points $(w_n) \in X$ converging to $z_0$ and such that, for all $n \in \N$,
$\delta(z_0,w_n) > 100n |z_0 - w_n|$.
Clearly, none of the points $w_n$ lie on the line segment joining $a$ to $z_0$. We may further suppose that
$|z_0-w_n| < |z_0-a|/100n$ for all $n \in \N$.
For fixed $n \in \N$, consider the following path $\gamma_n$ in $\C$ from $z_0$ to $w_n$: $\gamma_n$ starts at $z_0$, travels along a straight line towards $a$ for distance $3n|z_0-w_n|$, then travels through a small angle around a circle centred on $a$ until it meets the straight line joining $a$ to $w_n$, and finally travels along this straight line to $w_n$. It is clear that
$|\gamma_n|<\delta(z_0,w_n)$, and so the path $\gamma_n$ is not contained in $X$. Thus there must be a point $a_n$ on the short circular arc which is not in $X$. Let $L_n$ be the closed half-line joining $a_n$ to $\infty$ obtained as a continuation of the straight line from $a$ to $a_n$.

It is now easy to check that the conditions of Theorem \ref{long-dents} are satisfied, and so $\DDXN$ is incomplete.
\end{proof}

Note that most of the results above concern polynomially convex sets; we now consider sets which are not polynomially convex.

Let $X$ be a compact plane set such that $\DDXN$ is complete. Then an elementary argument shows that $(\DD(\widehat{X}),\norm)$ is also complete.

The converse is false. For example, let $(y_n)$ be a sequence in $(0,1)$ such that $(y_n)$ is strictly decreasing
\jchange and converges to $0$.
Set $A=\{0,1\}$, set
\[
B=\{0\} \cup \{y_n: n \in \N\}\,,
\]
and set
\[
X=\left(A \times [0,1]\right) \cup \left([0,1] \times B\right)\,,
\]
so that $X$ is the union of the boundary of a square with a suitable sequence of line segments running across it.
The set $X$, regarded as a subset of $\C$, is shown in Figure \ref{square-crossed}, below.

\begin{figure}[htb]
\input{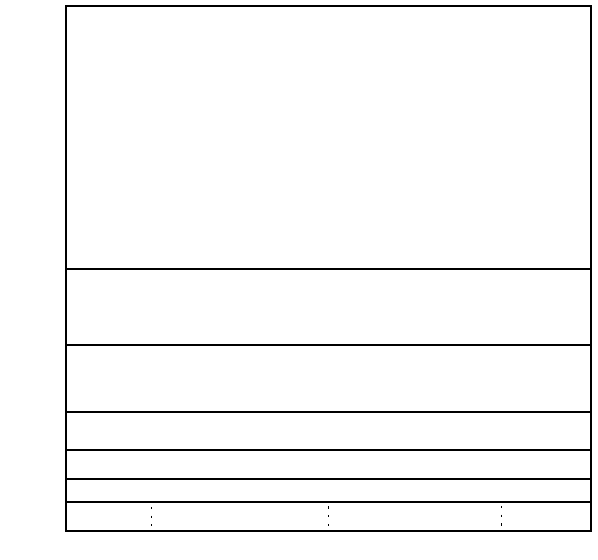_t}
\caption{A square crossed by lines}
\label{square-crossed}
\end{figure}

Although none of our theorems apply, it is not hard to show directly that $\DDXN$ is incomplete. However,
$\widehat X$ (which is a square) is convex, and hence is uniformly regular. Thus $(\DD(\widehat X),\norm)$ is complete.

\sskip

We shall now see that the method used in Theorem \ref{arc-conjecture} to prove that the normed algebra $(\DD(J),\norm)$ is incomplete for every non-rectifiable Jordan arc $J$ can be developed to apply to some more general sets.

Let $X$ be a compact space, and let $A$ be a uniform algebra on $X$. Recall that a point $z \in X$ is a \emph{peak point} for $A$ if there exists a function $f \in A$  with $f(z)=1$ and such that
$|f(w)|<1$ for all $w \in X \setminus \{z\}$.
Similarly, a non-empty, closed subset $E$ of $X$ is a \emph{peak set} for $A$ if
there exists a function $f \in A$  with $f(E)=\{1\}$ and such that
$|f(w)|<1$ for all $w \in X \setminus E$.

Let $X$ be a compact plane set. Although it is not noted explicitly in \cite{Gamelin}, it is an immediate consequence
\cite[Chapter VIII, Corollary 4.4]{Gamelin} that every point of the outer boundary of $X$ is a peak point for $P(\widehat X)$.
Furthermore, it then follows from \cite[Chapter II, Corollary 12.8]{Gamelin} that every non-empty, finite subset of
the outer boundary of $X$ is a peak set for $P(\widehat X)$. (See also \cite[Lemma 1.6.18]{Stout2}.)

\begin{lemma}
\label{blodges-lemma}
Let $Y$ be a compact plane set, and let $z$ and $w$ be points in the outer boundary of $Y$.
Suppose further that there is  a rectifiable arc $J$ joining $z$ to $w$ in $\widehat Y$.
Take $\eta\geq 0$. Then there is a polynomial $p$ with $p'(z)=p'(w)=0$, with $p(z)=\eta$, with $|p'|_Y<3$, and with $p(w)\in\R$ such that
$p(w)-p(z) > |z-w|/2.$
\end{lemma}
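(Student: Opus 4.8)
The plan is to build $p$ by first constructing its derivative $q:=p'$ as a genuine polynomial and then taking an antiderivative. Thus it is enough to find a polynomial $q$ with $q(z)=q(w)=0$ and $|q|_Y<3$ such that the contour integral $V(q):=\int_\gamma q$, taken along any path $\gamma$ from $z$ to $w$, is a positive real number with $V(q)>\tfrac12|z-w|$. Indeed, for a polynomial $q$ this integral equals $P(w)-P(z)$ for any polynomial antiderivative $P$, so it depends only on the endpoints; if $p$ is the polynomial antiderivative of $q$ normalized by $p(z)=\eta$, then $p(z)=\eta$, $p'(z)=p'(w)=0$, $|p'|_Y=|q|_Y<3$, $p(w)=\eta+V(q)\in\R$, and $p(w)-p(z)=V(q)>\tfrac12|z-w|$, as required (we may clearly assume $z\neq w$). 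Two facts drive the construction. First, since $z$ and $w$ lie in the outer boundary of $Y$, the finite set $\{z,w\}$ is a peak set for $P(\widehat Y)$ by the results of \cite{Gamelin} recalled above, so there is $h\in P(\widehat Y)$ with $h(z)=h(w)=1$ and $|h|<1$ on $\widehat Y\setminus\{z,w\}$. Secondly, for every polynomial $q$ we have $|q|_Y=|q|_{\widehat Y}$, since $\widehat Y$ is the polynomially convex hull of $Y$; this lets us control $q$ on all of $\widehat Y$, and in particular along $J$, purely from its values on $Y$.

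For $N\in\N$ set $\psi_N:=1-h^N\in P(\widehat Y)$. Then $\psi_N(z)=\psi_N(w)=0$, $|\psi_N|_{\widehat Y}\leq 2$, and $\psi_N\to 1$ pointwise on $\widehat Y\setminus\{z,w\}$ as $N\to\infty$. Fix a rectifiable path $\gamma$ from $z$ to $w$ with $\Jim{\gamma}=J$. Since $\gamma$ is rectifiable and the integrands are uniformly bounded, bounded convergence gives $\int_\gamma\psi_N\to\int_\gamma 1\dd\zeta=w-z$ as $N\to\infty$; here the hypothesis that a \emph{rectifiable} arc $J$ joins $z$ to $w$ \emph{inside} $\widehat Y$ is exactly what allows us to integrate these functions of $P(\widehat Y)$ along a rectifiable path between the two points (note that the straight segment from $z$ to $w$ need not lie in $\widehat Y$, as $\widehat Y$ need not be convex). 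Because $\psi_N$ lies in the uniform closure of the polynomials on $\widehat Y$, choose a polynomial $\rho$ with $|\rho-\psi_N|_{\widehat Y}<\eps$. To restore exact vanishing at the endpoints, subtract the degree-one interpolant $L(\zeta)=\rho(z)+\frac{\rho(w)-\rho(z)}{w-z}(\zeta-z)$ and set $\sigma:=\rho-L$, a polynomial with $\sigma(z)=\sigma(w)=0$. As $|\rho(z)|,|\rho(w)|<\eps$, one gets $|L|_{\widehat Y}\leq \eps\bigl(1+2\operatorname{diam}(\widehat Y)/|z-w|\bigr)$, so $|\sigma|_{\widehat Y}\leq 2+C\eps$ for a constant $C$ depending only on $z,w,Y$, and $W:=\int_\gamma\sigma$ differs from $w-z$ by at most $O(\eps)$ together with a term tending to $0$ as $N\to\infty$.

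Finally, rescale to fix the remaining two exact conditions. Put $q:=c\,\sigma$ with the unimodular constant $c=\overline W/|W|$. Then $q$ is a polynomial with $q(z)=q(w)=0$, with $V(q)=cW=|W|>0$ real, and with $|q|_Y=|q|_{\widehat Y}\leq 2+C\eps$. Choosing $N$ large and then $\eps$ small makes $|W|$ as close to $|z-w|$ as we wish while keeping $2+C\eps<3$; in particular $|W|>\tfrac12|z-w|$ and $|q|_Y<3$. The antiderivative $p$ described above then has all the required properties, completing the proof. The one point needing care is reconciling the two \emph{exact} constraints — that $p'$ vanish precisely at $z$ and $w$, and that $p(w)-p(z)$ be precisely real — with the norm bound $|p'|_Y<3$. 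These are handled, respectively, by the provably small linear correction $L$ and by the unimodular phase $c$; the estimates survive because both corrections can be made negligible, and because the bound $|\psi_N|_{\widehat Y}\leq 2$ leaves ample room below the threshold $3$ even after scaling by $|c|=1$.
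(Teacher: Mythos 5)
Your proof is correct and takes essentially the same route as the paper's: both rest on the fact that $\{z,w\}$ is a peak set for $P(\widehat Y)$, raise the peak function to a high power so that (by dominated/bounded convergence) its integral along the rectifiable arc becomes negligible, replace it by a polynomial vanishing exactly at $z$ and $w$ with sup-norm strictly below $3$, and finish with a unimodular rotation and an antiderivative normalized at $z$. The only differences are presentational: your explicit linear-interpolant correction $\sigma=\rho-L$ spells out the endpoint adjustment that the paper compresses into \lq\lq it is easy to check that there exists a polynomial $q$ with $|q|_X<2$ and $q(z)=q(w)=1$'', and your phase factor $c=\overline{W}/|W|$ plays the role of the paper's $\e^{\i\theta}$.
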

\begin{proof}
Set $X=\widehat Y$. Then $J = \Jim{\gamma}$ for some rectifiable path $\gamma$ joining $z$ to $w$ in $X$.
Since $z$ and $w$ are in the (outer) boundary of $Y$, $\{z,w\}$ is a peak set for $P(X)$.
Choose $g \in P(X)$ with $g(z)=g(w)=1$ and with $|g(u)|<1$ for all $u \in X \setminus \{z,w\}$.
By the dominated convergence theorem,
\[
\lim_{n\to\infty}\int_J|g|^n = 0\,.
\]
Thus, by replacing $g$ by a suitable power $g^n$ if necessary, we may suppose that
\[
\int_J |g| < \frac{|z-w|}{2}\,.
\]
Since $g \in P(X)$, it is easy to check that
there exists a polynomial $q$ with $|q|_X <2$, with $q(z)=q(w)=1$ and with
\[
\int_J |q| < \frac{|z-w|}{2}\,.
\]
Take a polynomial $r$ with $r'=q$, and set $s=r-Z$, so that $s'=q-1$.
Then $|s'|_X < 3$, and
\[
s(z)-s(w) = - \int_J s' = w-z - \int_J q\,,
\]
so that $|s(z)-s(w)| > |z-w|/2$.
Certainly we have $s'(z)=s'(w)=0$.
Choose a real number $\theta$ such that $\e^{\i\theta}(s(w)-s(z))>0$, and
let $p$ be the polynomial
$\eta + \e^{\i\theta}(s-s(z))$. It is clear that the polynomial $p$ has the desired properties.
\end{proof}

Using Lemma \ref{blodges-lemma}, we now prove a result for sets of the type shown in Figure \ref{blodges-figure}, below.
This result may also be used to prove our earlier incompleteness results concerning Jordan arcs.
\begin{figure}[htb]
\input{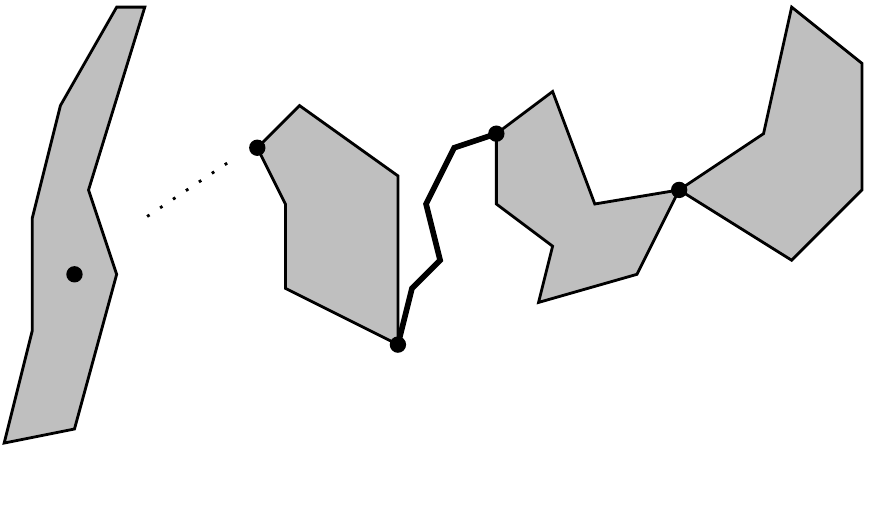_t}
\caption{A set of the type discussed in Theorem \ref{blodges-theorem}}
\label{blodges-figure}
\end{figure}
\begin{theorem}
\label{blodges-theorem}
Let $X$ be a connected, compact plane set, let $F$ be a non-empty, closed subset of  $X$, and let $z_0 \in F$.
Suppose that, for each $n \in \N$, there are a compact plane set $B_n \subseteq X \setminus F$ and a
point $v_n \in B_n$ such that
the following conditions hold:
\begin{enumerate}
\item[(i)] $X = F \cup \bigcup_{n=1}^\infty B_n\,;$\sskip
\item[(ii)]
$B_m \cap B_n = \emptyset$ whenever $m,n \in \N$ with $|m-n|>1$;\sskip
\item[(iii)]
$B_n \cap B_{n+1} = \{v_n\}$ for all $n \in \N$;\sskip
\item[(iv)]
the sequence of sets $(B_n)$ accumulates \jchange only on a subset of $F$, in the sense that
\[
\bigcap_{n=1}^\infty
\overline{\bigcup_{k=n}^\infty B_k} \subseteq F\,;
\]
\item[(v)]
for each $n \in \N$, $v_n$ may be joined to $v_{n+1}$ by some rectifiable arc in $\widehat{B_{n+1}}$;
\sskip
\item[(vi)]
$\sup_{n \in \N}(\sum_{k=n}^\infty|v_{k+1}-v_k|/|z_0-v_n|) = \infty$\,.
\end{enumerate}
Then $\DDXN$ is incomplete.
\end{theorem}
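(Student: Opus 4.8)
The plan is to apply our characterization of completeness in terms of the function $\QX$: since $X$ is connected, it is enough to exhibit a single point at which $\QX$ is infinite, and I shall prove that $\QX(z_0)=\infty$. Equivalently, given an arbitrary $R>0$, I must produce a non-constant $f\in\DD(X)$ and a point $w\in X$ for which $|f(z_0)-f(w)|>R\,|f'|_X\,|z_0-w|$.

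First I would fix the scale of the construction using hypothesis (vi). Since the supremum there is infinite, there is an index $n$ with $\sum_{k=n}^{\infty}|v_{k+1}-v_k|>6R\,|z_0-v_n|$; letting the finite partial sums increase to this tail, I may then choose $N>n$ so that
\[
\sum_{k=n}^{N-1}|v_{k+1}-v_k|>6R\,|z_0-v_n|\,.
\]
Note that $v_n\neq z_0$, because $v_n\in B_n\subseteq X\setminus F$ whereas $z_0\in F$.

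Next I would assemble $f$ from the pieces furnished by Lemma \ref{blodges-lemma}. For each $k$ with $n\le k\le N-1$, I would apply that lemma with $Y=B_{k+1}$ and distinguished points $v_k,v_{k+1}$ (which lie in the outer boundary of $B_{k+1}$ and, by (v), are joined by a rectifiable arc in $\widehat{B_{k+1}}$); this produces a polynomial $p_{k+1}$ with $|p_{k+1}'|_{B_{k+1}}<3$, with $p_{k+1}'(v_k)=p_{k+1}'(v_{k+1})=0$, and with a real increment exceeding $|v_k-v_{k+1}|/2$. Building inward from the requirement $f(v_N)=0$, I would use the free parameter $\eta\ge0$ in each successive application to match the value already assigned at $v_{k+1}$, so that $f$ gains more than $|v_k-v_{k+1}|/2$ across each $B_{k+1}$; hence
\[
f(v_n)>\tfrac{1}{2}\sum_{k=n}^{N-1}|v_{k+1}-v_k|>3R\,|z_0-v_n|\,.
\]
Finally I would extend $f$ to the whole of $X$ by setting it equal to the constant $f(v_n)$ on $\bigcup_{k\le n}B_k$, equal to $0$ on $\bigcup_{k\ge N+1}B_k$, and equal to $0$ on $F$; the values agree at every shared junction.

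The step I expect to be the real obstacle is verifying that the glued function $f$ genuinely lies in $\DD(X)$, with $|f'|_X<3$. At each junction $v_k$ the two adjacent pieces both have vanishing derivative, so they fit together to first order; and away from the junctions $f$ is locally a polynomial or a constant, so differentiability there, and the bound $|f'|_X<3$, are immediate. The genuinely delicate points are those of the accumulation set $E=\bigcap_{m}\overline{\bigcup_{k\ge m}B_k}$, which by (iv) satisfies $E\subseteq F$. Since each $B_k$ is compact and disjoint from $F$, each of the finitely many blobs on which $f$ is non-zero lies at a positive distance from $E$; hence every point of $E$ has a neighbourhood in $X$ meeting only $F$ and the blobs $B_k$ with $k\ge N+1$, on all of which $f=0$, so that $f$ is differentiable there with derivative $0$. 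Granting $f\in\DD(X)$, the proof closes at once: taking $w=v_n$ and using $f(z_0)=0$,
\[
\QX(z_0)\ \ge\ \frac{|f(z_0)-f(v_n)|}{|f'|_X\,|z_0-v_n|}\ >\ \frac{3R\,|z_0-v_n|}{3\,|z_0-v_n|}\ =\ R\,,
\]
and, $R>0$ being arbitrary, $\QX(z_0)=\infty$, so that $\DDXN$ is incomplete.
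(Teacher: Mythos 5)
Your proposal is correct and follows essentially the same route as the paper's own proof: reduce to showing $\QX(z_0)=\infty$, use hypothesis (vi) to pick the block of indices, apply Lemma \ref{blodges-lemma} successively to $B_{n+1},\dots,B_N$ with the parameter $\eta$ chosen to match values at the junctions $v_k$, and extend by constants over $\bigcup_{k\le n}B_k$ and $F\cup\bigcup_{k\ge N+1}B_k$. Your explicit verification that the glued function is continuously differentiable near the accumulation set $E$ (via the positive distance of the finitely many non-zero blobs from $E$) is a welcome elaboration of a point the paper compresses into its opening remark that $F\cup\bigcup_{k\ge m}B_k$ is closed, but it is not a different argument.
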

\begin{proof}
We first note that clause (iv) implies that
$F \cup \bigcup_{k=n}^\infty B_k$ is closed for each $n \in \N$.
Working in $\DD(X)$, we show that $Q_{z_0} = \infty$.

Given $C>0$, choose $n,M\in\N$ with $n<M$ and such that
\[
\sum_{k=n}^M \frac{|v_{k+1}-v_k|}{|v_n-z_0|} > 6C\,.
\]
 Set $Y=\bigcup_{k=n+1}^M B_k$.

We now apply Lemma \ref{blodges-lemma} successively to the sets $B_{n+1}, B_{n+2}, \dots, B_M$ (making appropriate choices of $\eta$), and patch the resulting functions together to give a function $f \in \DD(Y)$ with $|f'|_Y<3$ and with
\[
|f(v_{M+1})-f(v_n)| > \frac12 \sum_{k=n}^M |v_{k+1}-v_k|\,.
\]
We then extend $f$ to a function in $C(X)$, also denoted by $f$, such that $f$ is constant on each of the sets $F\cup\bigcup_{k=M+1}^\infty B_k$ and
\jchange $\bigcup_{k=1}^n B_k$.
Then we have $f \in \DD(X)$, $|f'|_X<3$, and $|f(z_0)-f(v_n)| > 3C |z_0-v_n|$.
Thus $Q_{z_0} > C$.

As $C$ is arbitrary, $Q_{z_0}=\infty$, and so $\DDXN$  is incomplete.
\end{proof}

We now wish to discuss a class of sets which our results do not cover
\begin{figure}[htb]
\input{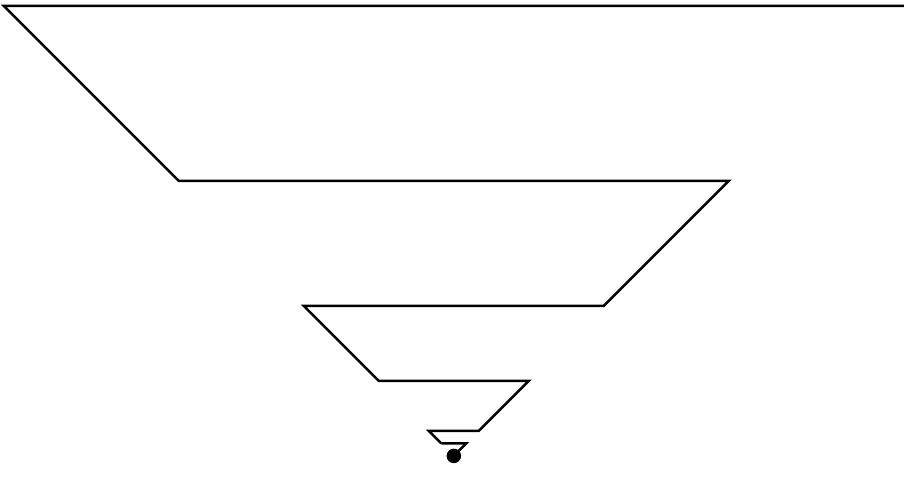_t}
\caption{A Jordan arc repeatedly crossing a triangle}
\label{triangle2}
\end{figure}

Let $Y$ be a triangle in the plane, with one vertex at $0$, and with a horizontal edge above this vertex.
We may then form a Jordan arc $J$ in $Y$ joining the top right corner of $Y$ to $0$ as follows. The arc $J$ consists of an infinite sequence of line segments accumulating at $0$. These line segments alternate between crossing $Y$ horizontally, and then following one of the non-horizontal edges. Finally, we add in the point $0$ at the end.
Such an arc $J$ is shown in Figure \ref{triangle2}, above.

By our earlier results, we know that $(\DD(J),\norm)$ is complete if and only if $J$ is pointwise regular.
Using Theorem \ref{blodges-theorem}, we may show that our conjecture also holds for sets $X$ formed from $J$ by slightly \lq fattening' the horizontal line segments in the manner shown in Figure \ref{triangle3}, below.

\begin{figure}[htb]
\input{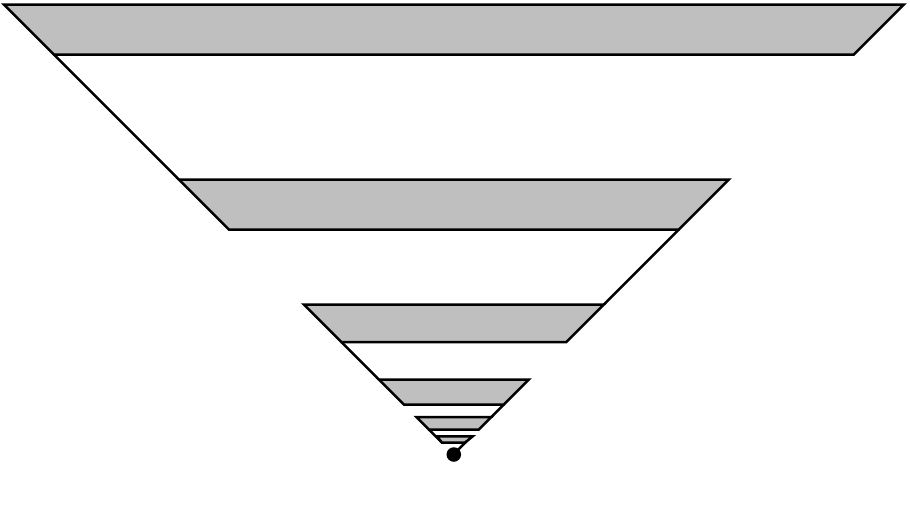_t}
\caption{A set obtained by \lq fattening' horizontal line segments}
\label{triangle3}
\end{figure}

However, none of our results apply to a set $X$ formed by similarly fattening \textit{all} of the line segments forming $J$, to form a set of the type shown in Figure \ref{triangle4}, below.
We call a set of this form a \emph{Superman set}.

\begin{figure}[htb]
\input{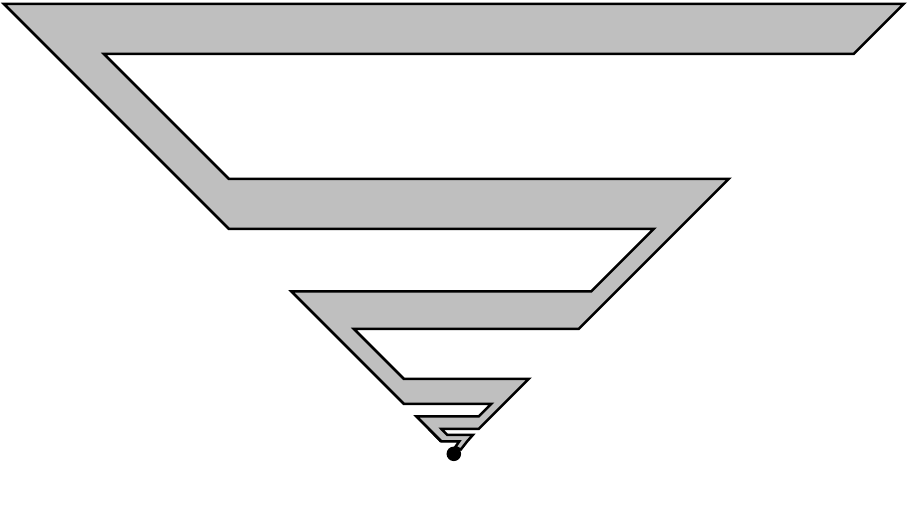_t}
\caption{A \lq fattened' Jordan arc repeatedly crossing a triangle}
\label{triangle4}
\end{figure}

It is possible to show, using Theorem \ref{blodges-theorem} and an inductive argument, \jchange that there are examples of compact plane sets $X$ of this latter type such that $(\DD(X),\norm)$ is incomplete.
However, we do not know whether or not there is a set $X$ of this type such that $X$ is not pointwise regular, and yet $(\DD(X),\norm)$ is complete.
\newpage
\section{Open questions}
\noindent
We conclude with a set of open questions concerning perfect, compact plane sets $X$.
\begin{enumerate}
\item Suppose that $X$ is connected, and that $(\DD(X),\Jnorm)$ is complete. Is it necessarily true that $X$ must be pointwise regular? Is this true if we assume, in addition, that $X$ is a Superman set?
\smallskip
\item Is it always true that $\DD(X)\subseteq R(X)$? Does this hold, at least, whenever $X$ is pointwise regular?
\smallskip
\item Is $R_0(X)$ always dense in $(\DD(X),\Jnorm)$? Is  $P_0(X)$ dense in $\DDXN$ whenever $X$ is polynomially convex?
\smallskip
\item Suppose that $\Tint X$ is dense in $X$. Is it always \jchange true that  $\Aone(X)\subseteq R(X)$?  Is $\Aone(X)$ always natural?
\smallskip
\item Is $\BF{X}$ natural whenever $\paths$ is an effective family of paths in $X$?
\smallskip
\item Suppose that $X$ is semi-rectifiable, and that $\paths$ is the set of all admissible paths in $X$. Is $\wdiff{X}$   always equal to $\BF{X}$?
Suppose further that $\DDXN$  is complete.  Does it follow that
\[
\DD(X)=\BF{X}\,?
\]
\item Is there a uniformly regular, polynomially convex, compact plane set such that
$\Tint X$ is connected and dense in $X$, and yet \[\wdiff{X}=\DD(X)\neq \Aone(X)\,?\]
\item (Local behaviour of the function $Q_X$.) Let $X$ be a connected, compact plane set, and let $z_0 \in X$. Suppose that $z_0$ has a connected, compact neighbourhood $N$ in $X$ such that $Q_N(z_0)=\infty$. Is it necessarily true that $Q_X(z_0)=\infty$?
\end{enumerate}

\newpage

\end{document}